\begin{document}

\theoremstyle{plain}
\newtheorem{thm}{Theorem}[section]
\newtheorem{Propos}[thm]{Proposition}
\newtheorem{lem}[thm]{Lemma}
\newtheorem*{lemanonum41}{Lemma 5.3}
\newtheorem*{lemanonum53}{Lemma 5.3 in [2]}
\newtheorem*{lemanonum}{Lemma}
\newtheorem*{definonum}{Definition}
\newtheorem*{teononum}{Theorem}
\theoremstyle{definition}
\newtheorem{defi}{Definition}
\newtheorem*{ams}{AMS subject classification}
\theoremstyle{remark}
\newtheorem*{obs}{Remark}
\newtheorem*{ack}{Acknowledgments}
\newtheorem*{nota}{Note}
\theoremstyle{definition}
\newtheorem*{demsigma}{Proof of Lemma \ref{sigma}}
\newtheorem*{demTeo}{Proof of Theorem \ref{teoesf}}
\newtheorem*{demnon}{Proof of Theorem \ref{nonexis}}

\newcommand{\al}{{\alpha}}
\newcommand{\ti}{{\theta}}
\newcommand{\Ti}{{\Theta}}
\newcommand{\ep}{{\epsilon}}

\markboth{CAROLINA A. REY}
{ELLIPTIC EQUATIONS WITH CRITICAL EXPONENT ON A INVARIANT REGION OF $\mathbb{S}^3$}
%

\title{ELLIPTIC EQUATIONS WITH CRITICAL EXPONENT ON \\ A TORUS INVARIANT REGION OF $\mathbb{S}^3$}

\author{{CAROLINA A. REY}}

\address{Departamento de Matem\'atica, Universidad de Buenos Aires\\
Ciudad Universitaria, Pabell\'on I, (C1428EGA), Buenos Aires, Argentina\\
carey@dm.uba.ar}

\begin{abstract}
We study the multiplicity of positive solutions of the critical elliptic equation:
\[
 \Delta_{\mathbb{S}^3} U = -(U^5 +\lambda U)  \hspace{0.3cm}\hbox{ on } \Omega
\]
that vanish on the boundary of $\Omega$, where $\Omega$ is  a region of $\mathbb{S}^3$ which is invariant by the natural $\mathbb{T}^2$-action.
H. Brezis and L. A. Peletier in \cite{BP} consider the case in which $\Omega$ is invariant by the $SO(3)$-action, namely, when $\Omega$ is a spherical cap.
We show that the number of solutions increases as $\lambda \to -\infty$, giving an answer of a particular case of an open problem proposed by 
H. Brezis and L. A. Peletier in \cite{BP}.
\end{abstract}

\maketitle


\section{Introduction}

We consider the critical elliptic equation:

\begin{equation}
\label{laplac}
 \Delta_{\mathbb{S}^3} U = -\left(U^5 +\lambda U\right)  \hspace{0.5cm}\hbox{on } \Omega 
\end{equation}
where $ \Delta_{\mathbb{S}^3}$ is the Laplace-Beltrami operator on $\mathbb{S}^3$ and $\Omega$ is a particular open subset of $\mathbb{S}^3$.
We look for positive solutions of (\ref{laplac}) such that

\begin{equation}
\label{borde}
 U=0 \hspace{0.5cm}\hbox{on } \partial{\Omega}.
\end{equation}
Problems of this kind have attracted the attention of several researchers with the aim to understand the existence and 
properties of the solutions. 

H. Brezis and L. Nirenberg considered the problem in $\mathbb{R}^3$:
\begin{equation}
\label{BN}
 \Delta_{\mathbb{R}^3} U = -\left(U^5 +\lambda U\right), U>0 \hbox{ in } B_{R^*}, \hspace{0.2cm} U=0 \hspace{0.2cm}\hbox{ on } \partial{B_{R^*}}
 \end{equation}
where $B_{R^*}$ is the ball of radius $R^*$ of $\mathbb{R}^3$. Using variational techniques, they obtained in \cite{BN} 
necessary and sufficient conditions on the value of $\lambda$ for the existence of a solution. 
This solution was shown to be unique by M. K. Kwong and Y. Li in \cite{KL}. This is now called the 
Brezis-Nirenberg problem and there are numerous results about solutions of this problem in
different open subsets of $\mathbb{R}^n$.

The case when Euclidean space is replaced by $\mathbb{S}^3$ was considered in \cite{BB}, \cite{BnP}, \cite{BPP} and \cite{BP}.
Let $D_{\ti^*}$ be a geodesic ball in the $3$-dimensional sphere centered at the North pole with geodesic radius $\ti^*$.
Problem (\ref{laplac})-(\ref{borde}) with  $\Omega=D_{\ti^*}$ has been investigated by C. Bandle and R. Benguria in \cite{BB}, C. Bandle and L.A. Peletier 
in \cite{BnP} and H. Brezis and L. A. Peletier in \cite{BP}  in order to identify the range of values of the parameters $\ti^*$ and $\lambda$ 
for which there exists a solution. It is well-known that 
the method of moving planes can be applied  when $\ti^*<\pi/2$ (which means that the geodesic ball is contained in
a hemisphere) to prove that all solutions are radial (see for instance \cite{P} and \cite{KP}). The value $\lambda =-3/4$ is special 
since $\Delta_{\mathbb{S}^3} -3/4$ is the conformal Laplacian on $\mathbb{S}^3$ and Eq. (\ref{laplac}) is then the Yamabe equation: 
in this case it is known that there are no nontrivial solutions satisfying (\ref{borde}).  The cases $\lambda >-3/4$ and $\lambda <-3/4$ present
very different features. We will be interested in the second case. In particular, the situation when $\lambda \to -\infty$ studied by H. Brezis and L. A. Peletier in \cite{BP}. 
The main result in  \cite{BP} reads:
\par

\begin{teononum}[{\bf H. Brezis and L. A. Peletier}]
Given any $\ti^* \in (\pi/2,\pi)$ and any $k\geq 1$, there exists a constant $A_k > 0$ such that for $\lambda < -A_k$, 
problem (\ref{laplac})-(\ref{borde})  with  $\Omega=D_{\ti^*}$ has at least $2k$ positive radial solutions such that
$U(\hbox{North pole}) \in (0, |\lambda|^{1/4})$.
\end{teononum}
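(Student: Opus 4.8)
The plan is to reduce \eqref{laplac}--\eqref{borde} with $\Omega=D_{\ti^{*}}$ to a one–parameter shooting problem for an ODE. Writing $U=U(\ti)$ for a radial function ($\ti$ the geodesic distance to the North pole), \eqref{laplac} becomes
\[
U''+2\cot\ti\,U'+U^{5}+\lambda U=0 \ \text{ on } (0,\ti^{*}),\qquad U'(0)=0,\quad U(\ti^{*})=0,\quad U>0 .
\]
The first step is a change of variables that removes the drift term and exhibits the concentration mechanism: with $t=\log\tan(\ti/2)$ (so $t\in(-\infty,t^{*})$, $t^{*}=\log\tan(\ti^{*}/2)>0$ since $\ti^{*}>\pi/2$, and $\sin\ti=\operatorname{sech}t$) and $Z=(\sin\ti)^{1/2}U$, the equation is transformed into
\[
Z''-\tfrac14 Z+\frac{\lambda+3/4}{\cosh^{2}t}\,Z+Z^{5}=0,\qquad Z(t)\sim\sqrt2\,d\,e^{t/2}\ (t\to-\infty),\quad Z>0,\quad Z(t^{*})=0,
\]
with $d:=U(\text{North pole})$ as shooting parameter. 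For $\lambda\to-\infty$ the zeroth–order coefficient is a deep $O(1)$–wide potential well at $t=0$ (i.e.\ at $\ti=\pi/2$); away from it the equation is the autonomous one $Z''-\tfrac14 Z+Z^{5}=0$, whose positive homoclinic solution $W(t)=(3/4)^{1/4}(\cosh t)^{-1/2}$—and, inside the well, the rescaled ground state of $Y''-Y+Y^{5}=0$, a spike at $\ti=\pi/2$ of height $\sim|\lambda|^{1/4}$ and width $\sim|\lambda|^{-1/2}$—is the basic ``bubble''.

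Let $Z(\cdot\,;d)$ be the solution with the prescribed decay at $-\infty$ and let $\Theta(d)$ be the first zero in $\ti$ of the corresponding radial $U$. Since imposing $U(\ti^{*})=0$ at the \emph{first} zero automatically forces $U>0$ on $(0,\ti^{*})$, positive radial solutions of \eqref{laplac}--\eqref{borde} correspond exactly to the roots of $\Theta(d)=\ti^{*}$; it thus suffices to show that for $\lambda<-A_{k}$ this equation has at least $2k$ roots $d\in(0,|\lambda|^{1/4})$.

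The heart of the matter is the behaviour of the shooting map $d\mapsto\Theta(d)$ as $\lambda\to-\infty$. Matched asymptotics predicts that for admissible $d$ the solution is a finite string of well–separated bubbles (rescaled copies of $W$, or spikes, centred at $\ti_{1}<\dots<\ti_{m}$) glued across the intervals where $Z$ is exponentially small and the linear equation $Z''=\big(\tfrac14-\tfrac{\lambda+3/4}{\cosh^{2}t}\big)Z$ applies; there the two fundamental solutions are positive, so $Z>0$ between bubbles, the left tail pins the first bubble's position as a decreasing function of $d$, the condition $Z(t^{*})=0$ tunes the last one, and—using $\sqrt{\tfrac14-\tfrac{\lambda+3/4}{\cosh^{2}t}}\,dt\sim\sqrt{|\lambda|}\,d\ti$—consecutive bubbles near $\ti=\pi/2$ must sit at distance $\sim|\lambda|^{-1/2}$, so at most $\sim\sqrt{|\lambda|}\,\ti^{*}$ fit inside $(0,\ti^{*})$. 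Hence the admissible number of bubbles runs from $1$ up to some $m_{\lambda}\to\infty$ and $\Theta(d)$ oscillates, performing one ``loop'' across $(\pi/2,\pi)$ for each admissible bubble number, each loop contributing two roots of $\Theta(d)=\ti^{*}$. Picking $A_{k}$ so large that $m_{\lambda}\ge k$ whenever $\lambda<-A_{k}$ then produces at least $2k$ roots $d\in(0,|\lambda|^{1/4})$, i.e.\ at least $2k$ positive radial solutions with $U(\text{North pole})\in(0,|\lambda|^{1/4})$.

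The main obstacle is to make this last step rigorous, uniformly as $\lambda\to-\infty$. One needs: (i) the transfer asymptotics across the well—a turning–point/WKB analysis of $Z''=\big(\tfrac14-\tfrac{\lambda+3/4}{\cosh^{2}t}\big)Z$, which is tractable since the operator is of P\"oschl--Teller type; (ii) a gluing (Lyapunov--Schmidt) construction establishing that the multi-bubble configurations genuinely persist, with control of the exponentially small bubble interactions; and (iii) a transversality/monotonicity statement ensuring that each admissible bubble number contributes exactly the claimed two crossings of the level $\ti^{*}$. Steps (ii)–(iii), turning the heuristic bubble count into a rigorous solution count with multiplicity $2k$, are where the real difficulty lies.
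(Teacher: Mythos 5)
Your reduction to a one--parameter shooting problem and the Emden--Fowler change of variables are correct, and the qualitative picture (spikes of height $\sim|\lambda|^{1/4}$ and width $\sim|\lambda|^{-1/2}$, an oscillating first--zero map $d\mapsto\Theta(d)$ contributing two roots of $\Theta(d)=\ti^*$ per admissible spike number) is the right one. But as written this is a program, not a proof: the steps you yourself label (ii) and (iii) --- persistence of the multi--bubble configurations and the claim that each admissible bubble number yields exactly two crossings of the level $\ti^*$ --- are the entire content of the theorem, and you leave them unestablished. Concretely, nothing in your argument proves that $\Theta$ actually dips below $\ti^*$ for some $d$ in each ``loop'', nor that $\Theta$ tends to $\pi$ at the ends of each loop; without both facts the intermediate value theorem yields no roots at all. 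Likewise the monotone dependence of the first bubble's position on $d$ is asserted, not proved, and the WKB/Lyapunov--Schmidt route you propose for fixing this would require uniform control of exponentially small interactions that you do not supply.

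The statement is quoted from \cite{BP}, and the proof there (adapted in Section 4 of this paper to the torus--invariant case) closes exactly this gap by elementary shooting and energy arguments, with no gluing or turning--point analysis. Setting $\epsilon^2=1/|\lambda|$ and $u=|\lambda|^{-1/4}U$, one (a) produces for each $j$ an initial value $\al_0(\epsilon)$ whose solution has its $j$-th spike at a prescribed $T_0$, by continuity from the linearization about the constant solution; (b) proves $F(u_0(\epsilon))>A\epsilon>0$ for the spike height $u_0(\epsilon)$, using the monotonicity of the energy $E$ (whose derivative is $-2\cot(\ti)\,u'(\ti)^2$ there, with $\cot(2\ti)$ here), the convergence of the blown--up solution to the limit equation (\ref{zzeta}), and an exponential barrier obtained by comparing with the explicit linear problem (\ref{phi}) via the maximum principle; this forces the solution to vanish within $O(\sqrt{\epsilon})$ after its last spike, hence $\min\Theta$ over each $j$-spike component tends to $\pi/2$ (here $\pi/4$); and (c) shows $\Theta\to\pi$ (here $\pi/2$) at the endpoints of each connected component of the $j$-spike set, since otherwise the endpoint would itself lie in that open set. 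Continuity then gives two crossings of $\ti^*$ per component. To complete your proposal you must supply quantitative substitutes for (b) and (c); the energy--and--barrier route is substantially more elementary than the matched--asymptotics construction you sketch.
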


This result was extended by C. Bandle and J. Wei in \cite{BnW, BnWei} to general dimensions and non-critical
exponents. Also when
$\ti^* > \pi/2$ the moving plan method does not work and in
\cite{BnW} the authors establish the existence of positive nonradial solutions. In  \cite{BnWei} the authors  proved for balls of geodesic
radius $\ti^* > \pi/2$ the existence of radially symmetric clustered layer solutions as $\lambda \to -\infty$.

Inspired by the  theorem of H. Brezis and L. A. Peletier, we study problem (\ref{laplac})-(\ref{borde}) for the special case where $\Omega$ is a 
torus invariant region of $\mathbb{S}^3$. 
The spherical caps $D_{\ti^*}$ are invariant by the codimension one action of $O(3)$ on $\mathbb{S}^3$. The poles
are the singular orbits of the action and the spherical caps are the geodesic tubes around one of the singular orbits. 
In this paper we will consider the torus action on $\mathbb{S}^3$, which is the other codimension one isometric action. Consider 
$
\mathbb{T}^2 = \mathbb{S}^1 \times \mathbb{S}^1
$
and the natural action $\mathbb{T}^2 \times \mathbb{S}^3 \to \mathbb{S}^3$ given by

\begin{equation}
\label{action}
 (\alpha, \beta)(x, y, z, w)=(\alpha \cdot (x,y), \beta \cdot (z,w))
\end{equation}
where $\cdot$ is the complex multiplication. This  is an isometric, codimension one, action on $\mathbb{S}^3$ and there are two special orbits:
$\mathbb{S}^1\times \{0\}$ and $ \{0\}\times\mathbb{S}^1$. The distance between these two singular orbits is $\pi/2$.
As in the case of spherical caps studied by Brezis and Peletier, we consider an open set $\Omega$ which is the geodesic tube 
around one of the singular orbits:
\[
 \Omega=\{ \tilde{x}\in\mathbb{S}^3/ \hbox{ dist} (\tilde{x},\mathbb{S}^1\times {0})\leq\ti_1\},
\]
with $\ti_1\in(0,\pi/2)$.

Now we present a change of variables leading to a different formulation of problem (\ref{laplac})-(\ref{borde}). 
With this aim, we introduce the next local coordinates in $\mathbb{R}^4$:
\begin{equation}
\label{coord}
\left\{
\begin{array}{rcl}
x_1&=&r\cos(\ti)\cos(\eta_1)\\
x_2&=&r\cos(\ti)\sin(\eta_1)\\
x_3&=&r\sin(\ti)\cos(\eta_2)\\
x_4&=&r\sin(\ti)\sin(\eta_2)\\
\end{array}
\right.
\end{equation}
where $r = \sqrt{x_1^2+x_2^2+x_3^2+x_4^2}$, $0 \leq \ti<\pi/2$, $0\leq \eta_1, \eta_2 \leq 2\pi$. 
In these coordinates, the unit sphere $\mathbb{S}^3$ can be parameterized by
$r = 1$, $\{ 0\leq \ti \leq \pi/2, 0< \eta_1, \eta_2 < 2\pi\}$.
The round metric $g$ on the 3-sphere in these coordinates is given by
\[
ds^2= d\ti^2+ \cos^2(\ti) d\eta_1^2+ \sin^2(\ti) d\eta_2^2
\]

Note that $\ti$ is the geodesic distance to the orbit $\mathbb{S}^1\times \{0\}$.
Then
$$
\Omega=\Omega_{\ti_1}=\{ \left(x_1,x_2,x_3,x_4\right)\in \mathbb{S}^3/ 0 \leq \ti \leq \ti_1\}
$$
with $\ti_1\in(0,\pi/2).$ Consequently $\Omega$ is an open subset in $\mathbb{S}^3$ invariant by the $\mathbb{T}^2$-action.
Recall that the Beltrami-Laplace operator on $\mathbb{S}^3$ in local coordinates is given by:
\begin{equation}
 \Delta_{\mathbb{S}^3}=\frac{1}{\sqrt{|g|}}\sum_{i=1}^3 
\frac{\partial}{\partial{\eta_i}}\left(g_{ii}^{-1}\sqrt{|g|}\frac{\partial}{\partial{\eta_i}}\right).
\end{equation}

Suppose that the function $U:\Omega \to \mathbb{R}$ is invariant by the $\mathbb{T}^2$-action. 
Then $U(x,y,z,w)=u(\ti)$ for some function $u:[0,\ti_1]\to \mathbb{R}$ and since  
$$|g|= \cos^2(\ti)  \sin^2(\ti),$$ 
the Laplace-Beltrami operator on $\mathbb{S}^3$ applied to $U$ takes the form:
\begin{equation}
\label{DeltaU}
\begin{array}{rcl}
\Delta_{\mathbb{S}^3} U 
&=&\displaystyle\frac{1}{\cos(\ti)  \sin(\ti)} \displaystyle\frac{d}{d\ti}\left(\cos(\ti)  \sin(\ti)\displaystyle\frac{d{u}}{d{\ti}}\right)\\
\\
&=&u''(\theta) +\displaystyle\left( \displaystyle\frac{ \cos(\theta)}{\sin(\theta)}- \displaystyle\frac{\sin(\theta)}{\cos(\theta)}  \right)  u'(\theta)\\
\\
 &=& u''(\theta) + 2 \displaystyle\frac{ \cos(2\theta)}{\sin(2\theta)} u'(\theta)\\
\end{array}
\end{equation}
It follows that if we restrict the  original problem to functions which are invariant by the $\mathbb{T}^2$-action,
it is equivalent to finding solutions of:

\begin{equation}
\label{toro}
\left\{
\begin{array}{rcl}
 u''(\theta) + 2 \frac{ \cos(2\theta)}{\sin(2\theta)} u'(\theta) &=&
 \lambda \left(u(\theta)^5 - u(\theta)\right), \hspace{0.5cm} u>0  \hspace{0.5cm}\hbox{on } (0,\theta_1)\\
u'(0)&=&0\\
u(\theta_1)&=&0\\
\end{array}
\right.
\end{equation}

We also are interested to study solutions of the equation invariant by the $\mathbb{T}^2$-action
in the whole sphere $\mathbb{S}^3$: 

\begin{equation}
\label{esfera}
 \Delta_{\mathbb{S}^3} U = \lambda \left(U^5 -U\right),  \hspace{0.5cm} U>0  \hspace{0.5cm} \hbox{on } \mathbb{S}^3\\
\end{equation}

Positive solutions of (\ref{esfera}) are called ``ground state" solutions. We have the following result analolgous to 
\cite[Theorem 1.6]{BP}:

\begin{thm}\label{teoesf} 
Let $n\geq 1$ and $\lambda \in [-(2n+2)(2n+3),-(2n)(2n+1))$. Then for every $k\in \{1,2, \dots, n\}$ 
there exists at least one solution $U_k$ of problem (\ref{esfera}), where $U_k=u_k(\theta)$ has the following propieties:
\begin{itemize}
 \item $u_k$ has exactly $k$ local maximum on $(0,\frac{\pi}{2})$
 \item $u_k(\pi/2-\theta)=u_k(\theta)$ for $\theta \in (0, \frac{\pi}{2})$;
 \item $u_k(0)<1$.
\end{itemize}
\end{thm}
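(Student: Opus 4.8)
The plan is to restrict (\ref{esfera}) to $\mathbb{T}^2$-invariant functions, where by (\ref{DeltaU}) it becomes the ODE $u''+2\frac{\cos 2\theta}{\sin 2\theta}u'=\lambda(u^5-u)$, $u>0$ on $(0,\pi/2)$, the smoothness across the two singular orbits amounting to $u'(0)=u'(\pi/2)=0$ (both ends are regular singular points of $2$D-radial type, the coefficient behaving like $1/\theta$ and $1/(\pi/2-\theta)$). Since $\theta\mapsto\pi/2-\theta$ leaves the equation invariant (it flips $\cos 2\theta$ and $d/d\theta$, keeps $\sin 2\theta$), I look for solutions even about $\pi/4$: solve the ODE on $[0,\pi/4]$ with $u'(0)=u'(\pi/4)=0$, $u>0$, and reflect; as the first-order coefficient vanishes at $\pi/4$ the reflection is $C^2$ there, hence a smooth positive solution of (\ref{esfera}).

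Next I set up a shooting argument: for $a\in(0,1)$ let $u(\,\cdot\,;a)$ solve the IVP with $u(0)=a$, $u'(0)=0$ (well posed because $u'(0)=0$). Along it the energy $E=\tfrac12 u'^2-F(u)$ with $F(u)=\lambda(\tfrac{u^6}{6}-\tfrac{u^2}{2})$ satisfies $E'=-2\tfrac{\cos 2\theta}{\sin 2\theta}u'^2\le 0$ on $(0,\pi/4)$, and $-F$ is a potential well with minimum at $u=1$ that vanishes at $u=0$ and at $u=3^{1/4}$. Since $E(0)=-F(a)<0$, the bound $E\le E(0)<0$ keeps $0<u<3^{1/4}$ and $u'$ bounded, so the solution is global, positive and uniformly bounded on $[0,\pi/4]$. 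Hence $u(\,\cdot\,;a)$ yields a solution of (\ref{esfera}) exactly when $u'(\pi/4;a)=0$.

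The counting is carried out in the $(u,u')$-phase plane. Because $\lambda(u^5-u)$ has the sign opposite to $u-1$ on $\{0<u<3^{1/4}\}$, every trajectory winds monotonically clockwise about the equilibrium $(1,0)$ (never reached, since $a\neq 1$); with a Pr\"ufer angle $\phi$, $u-1=\rho\cos\phi$, $u'=-\rho\sin\phi$, this says $\phi$ is decreasing and strictly so across the axes. Put $\mathcal R(a)=\tfrac1{2\pi}(\phi(0;a)-\phi(\pi/4;a))\ge 0$; it is continuous in $a$ and $\phi(0;a)=\pi$. A count of the real-axis crossings gives $u'(\pi/4;a)=0\iff\mathcal R(a)\in\tfrac12\mathbb Z$, and in that case the reflected solution has exactly $2\mathcal R(a)$ local maxima on $(0,\pi/2)$ (a maximum at $\pi/4$ when $2\mathcal R(a)$ is odd, a minimum when even). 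So it suffices to show $\mathcal R$ attains every value in $\{\tfrac12,1,\dots,\tfrac n2\}$. As $a\to 0^+$, comparison of $u$ with the solution $aW$ of $W''+2\frac{\cos 2\theta}{\sin 2\theta}W'=|\lambda|W$, $W(0)=1$, $W'(0)=0$ (positive and increasing) gives $u\le aW<1$ on $[0,\pi/4]$ for small $a$, so $u$ is increasing there and $\mathcal R(a)<\tfrac14$. As $a\to 1^-$, $u\to 1$ in $C^1$ and $\tfrac{u-1}{a-1}\to v$, the solution of the linearized equation $v''+2\frac{\cos 2\theta}{\sin 2\theta}v'=4\lambda v$, $v(0)=1$, $v'(0)=0$, so $\mathcal R(a)\to\mathcal R_{\mathrm{lin}}$. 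After the change $s=\cos 2\theta$, $-\Delta_{\mathbb S^3}$ on invariant functions is $4$ times the Legendre operator, with eigenvalues $\mu_\ell=4\ell(\ell+1)$ and eigenfunctions $P_\ell(\cos 2\theta)$ ($P_\ell$ even/odd, hence even/odd about $\pi/4$, according to the parity of $\ell$). The hypothesis $\lambda\in[-(2n+2)(2n+3),-(2n)(2n+1))$ reads $-4\lambda\in(\mu_{2n},\mu_{2n+2}]$; by Sturm comparison $\phi_v(\pi/4)$ decreases in $-4\lambda$ and equals $-n\pi$ at $-4\lambda=\mu_{2n}$, so $\mathcal R_{\mathrm{lin}}\in(\tfrac n2,\tfrac{n+1}2]$, in particular $>\tfrac n2$. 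Therefore $\mathcal R$ is continuous on $(0,1)$ with values both $<\tfrac14$ and $>\tfrac n2$, so its image is an interval containing $\{\tfrac12,1,\dots,\tfrac n2\}$; choosing $a_k$ with $\mathcal R(a_k)=\tfrac k2$ and reflecting produces $U_k=u_k(\theta)$ with exactly $k$ maxima on $(0,\pi/2)$, even about $\pi/4$, and $u_k(0)=a_k<1$.

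I expect the main obstacles to be (i) proving the winding about $(1,0)$ is genuinely monotone in spite of the first-order ``damping'' term --- to be handled through the right Pr\"ufer variables or directly from the sign of $\lambda(u^5-u)(u-1)$ --- and (ii) the Sturm/eigenvalue bookkeeping that yields the sharp bound $\mathcal R_{\mathrm{lin}}>n/2$, where the exact constants $(2n)(2n+1)$, $(2n+2)(2n+3)$ and the parities of the $\mathbb T^2$-invariant spherical harmonics ($P_{2\ell}$ even, $P_{2\ell+1}$ odd about $\pi/4$) really enter; a secondary point is checking that the reflected function is a bona fide smooth positive solution on all of $\mathbb S^3$, which uses the bound $u<3^{1/4}$ and the regular-singular-point structure at $\theta=0,\pi/2$.
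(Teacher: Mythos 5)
Your proposal is correct and follows essentially the same route as the paper: shoot from $u(0)=\alpha\in(0,1)$, use the energy that decreases on $(0,\pi/4)$ to keep the orbit in the well $\{0<u<3^{1/4}\}$, reflect about $\pi/4$ via the symmetry of the equation, and locate the desired initial values by an intermediate-value argument between the limits $\alpha\to0$ (no oscillation) and $\alpha\to1$ (linearization at $u\equiv1$, whose oscillation count on $(0,\pi/4)$ is read off from the eigenvalues $-\ell(\ell+1)$ and the parity of the eigenfunctions in $\cos(2\theta)$). The only real difference is bookkeeping: you apply the intermediate value theorem to a single Pr\"ufer winding number $\mathcal{R}(\alpha)$, whereas the paper tracks the $k$-th critical point $\tau_k(\alpha)$ and the threshold $\alpha_k^*$ below which it ceases to exist; the two are equivalent once one checks, as you indicate, that the orbit crosses $\{u'=0\}$ transversally and always in the same sense, which follows from the sign of $\lambda(u^5-u)(u-1)$.
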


We are interested in positive solutions of (\ref{toro}) with initial value in the interval $(0,1)$. 
Firstly we prove a theorem of nonexistence:
\begin{thm}
\label{nonexis}
If $\theta_1 \in (0, \pi/4)$, then there are no solutions of (\ref{toro}) with initial value in the interval $(0,1)$.
\end{thm}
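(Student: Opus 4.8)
The plan is to run a Pohozaev/energy argument tailored to the ODE in (\ref{toro}), exploiting that the first-order coefficient $2\cos(2\theta)/\sin(2\theta)$ keeps a fixed (positive) sign precisely on the range $(0,\pi/4)$, which forces a suitable energy to be monotone there. Throughout, recall that we are in the regime $\lambda<0$.

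I would argue by contradiction. Suppose $u$ solves (\ref{toro}) on $(0,\theta_1)$ with $\theta_1\in(0,\pi/4)$, $u'(0)=0$, $u(\theta_1)=0$, $u>0$ on $(0,\theta_1)$, and $a:=u(0)\in(0,1)$. A direct computation (multiply the equation by $u'$; both sides are then total derivatives up to the damping term) shows that the energy
\[
E(\theta):=\tfrac12\,u'(\theta)^2+\lambda\!\left(\frac{u(\theta)^2}{2}-\frac{u(\theta)^6}{6}\right)
\]
satisfies
\[
E'(\theta)=-\,2\,\frac{\cos(2\theta)}{\sin(2\theta)}\,u'(\theta)^2\qquad\text{for } \theta\in(0,\theta_1).
\]
Since $\theta_1<\pi/4$, both $\cos(2\theta)$ and $\sin(2\theta)$ are positive on $(0,\theta_1)$, so $E'\le 0$ there; and since $E$ is continuous on $[0,\theta_1]$ (each of its terms is, using $u'(0)=0$), it is non-increasing on $[0,\theta_1]$. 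Next I would evaluate the endpoints: from $u'(0)=0$,
\[
E(0)=\lambda\!\left(\frac{a^2}{2}-\frac{a^6}{6}\right)=\frac{\lambda a^2}{6}\,(3-a^4)<0,
\]
because $a\in(0,1)$ forces $3-a^4>2>0$ while $\lambda<0$; and from $u(\theta_1)=0$,
\[
E(\theta_1)=\tfrac12\,u'(\theta_1)^2\ge 0.
\]
Monotonicity of $E$ then yields $0\le E(\theta_1)\le E(0)<0$, a contradiction, so no such solution can exist.

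I do not expect a genuine obstacle here: the computation is short and standard, and notably it does not require controlling the behaviour of $u$ on the interior of $(0,\theta_1)$ at all. The one delicate point is the singular coefficient $\cos(2\theta)/\sin(2\theta)$ at $\theta=0$, so I would take care to verify that $E$ extends continuously to $\theta=0$ and that $E'$ is integrable near $0$ (equivalently, $u'(\theta)=O(\theta)$ as $\theta\to0^+$, which follows from $u'(0)=0$ together with the equation), so that the monotonicity of $E$ on the \emph{closed} interval $[0,\theta_1]$ is legitimate. The conceptual point worth emphasizing is that $\theta=\pi/4$ is exactly where $2\cos(2\theta)/\sin(2\theta)$ vanishes and changes sign, which is precisely why the argument covers $\theta_1\in(0,\pi/4)$ and no more.
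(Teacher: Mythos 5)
Your proof is correct and takes essentially the same approach as the paper: the energy $E$ you introduce is precisely the quantity whose integrated form appears in the paper's identity (\ref{ne1}), and both arguments reach the same contradiction $0\le E(\theta_1)\le E(0)=-\lambda F(\alpha)<0$ using the positivity of $\cos(2\theta)/\sin(2\theta)$ on $(0,\pi/4)$. Your added remark about checking integrability of $E'$ near the singular endpoint $\theta=0$ is a minor refinement the paper leaves implicit.
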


This means that the solutions of (\ref{toro}) with initial value in the interval $(0,1)$ do not vanish before $\pi/4$. 
However, we shall prove the existence of an increasing number of solutions of problem (\ref{toro}) as $\lambda$ goes to $-\infty$ 
with initial value in the interval $(0,1)$, which gives a partial positive answer to the open problem 8.3 proposed by H. Brezis and 
L. A. Peletier in \cite{BP}. Our main result in this paper is the following

\begin{thm}
\label{ksoluciones}
Given any $k\geq 1$ and any $\theta_1 > \pi/4$, then there exists a constant $A_k > 0$ such that for $\lambda <-A_k$ 
problem  (\ref{toro}) has at least $2k$ solutions with initial value in the interval $(0,1)$.
\end{thm}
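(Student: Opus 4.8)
The plan is to adapt the Brezis--Peletier shooting argument (which in the spherical-cap case produces $2k$ radial solutions) to the degenerate ODE in \eqref{toro}, whose coefficient $2\cos(2\theta)/\sin(2\theta)$ is singular at $\theta=0$ \emph{and} at $\theta=\pi/2$, the latter being the second singular orbit. For a parameter $a\in(0,1)$ consider the initial value problem consisting of the ODE in \eqref{toro} together with $u(0)=a$, $u'(0)=0$; since the singularity at $0$ is of the usual Bessel type, standard theory gives a unique local solution $u(\cdot\,;a,\lambda)$, which we continue as long as it stays positive. Write $\Theta(a,\lambda)$ for the first zero of $u(\cdot\,;a,\lambda)$ (possibly $+\infty$, i.e. the solution stays positive up to $\pi/2$ and beyond by the symmetry across $\pi/2$). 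The goal is: for every target $\theta_1>\pi/4$ and every $k$, once $-\lambda$ is large enough there are at least $2k$ values of $a\in(0,1)$ with $\Theta(a,\lambda)=\theta_1$.

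The key steps, in order. (1) \textbf{Rescaling near $\theta=0$.} Set $\theta=|\lambda|^{-1/2}s$ and $v(s)=u(|\lambda|^{-1/2}s)$; as $|\lambda|\to\infty$ the equation for $v$ converges, on compact $s$-intervals, to the limiting Euclidean Brezis--Nirenberg-type equation $v'' + \tfrac{2}{s}v' = -(v^5 - \text{(lower order)})$ — more precisely, since near $\theta=0$ one has $2\cos2\theta/\sin2\theta \approx 1/\theta$, the rescaled operator is the radial Laplacian in $\mathbb{R}^3$ and the limit problem is $\Delta_{\mathbb{R}^3} v = -v^5$ with $v(0)=a$, $v'(0)=0$, whose solutions are the explicit Aubin--Talenti bubbles. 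This identifies the leading-order behavior and, crucially, shows that a solution starting at height $a$ close to $1$ makes its first ``descent'' on an $O(|\lambda|^{-1/2})$ scale. (2) \textbf{Behavior in the regular region.} Away from $0$ and $\pi/2$, when $u$ is small the nonlinear term $u^5$ is negligible against $\lambda u$, so the equation is approximately the linear one $u'' + 2\cot(2\theta)\,u' = -|\lambda| u$, whose solutions oscillate with $\sim |\lambda|^{1/2}$ zeros packed into any fixed subinterval of $(0,\pi/2)$; this is what forces the zero count up as $-\lambda\to-\infty$. (3) \textbf{Gluing and a shooting/winding count.} Combining (1) and (2): as $a$ ranges over $(0,1)$, each ``bubble'' that the trajectory develops costs a nearly fixed additional phase, and one shows that the map $a\mapsto \Theta(a,\lambda)$ sweeps across the value $\theta_1$ at least $2k$ times once $|\lambda|$ is large (the factor $2$ coming, as in \cite{BP}, from the two monotonicity regimes of the ``time map'' — solutions with few bubbles but the first zero already past $\theta_1$, versus solutions with more bubbles). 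A clean way to organize this is via an energy/Pohozaev-type functional $H(\theta) = \tfrac12 (u')^2 \sin(2\theta) + (\text{potential terms})$ together with a Prüfer-type angle to track oscillations, exactly mirroring the structure used for geodesic balls, and then invoke Theorem~\ref{teoesf} and Theorem~\ref{nonexis} to pin the endpoints: Theorem~\ref{nonexis} guarantees no zero occurs before $\pi/4$, so all the counted solutions genuinely live in $(\pi/4,\theta_1)$, while the ground-state solutions of \eqref{esfera} from Theorem~\ref{teoesf} serve as the symmetric ``reference'' profiles around which the shooting trajectories are perturbations.

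I expect the main obstacle to be step (2)–(3) near the \emph{second} singularity $\theta=\pi/2$: unlike the spherical cap, where one singular orbit is a point and the geometry degenerates only at the pole, here $\sin(2\theta)\to 0$ as $\theta\to\pi/2$ as well, so a priori bounds and the continuation of trajectories must be controlled uniformly up to $\theta_1<\pi/2$ without the comparison tools (moving planes) available in a hemisphere. Concretely, one must show the rescaled ``inner'' solution from step (1), after matching to the oscillatory ``outer'' solution of step (2), does not prematurely blow up or fail to cross zero, i.e. establish the uniform-in-$\lambda$ continuation estimate $\Theta(a,\lambda)<\theta_1$ for the relevant range of $a$. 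Once that uniform control is in hand, the counting is a degree/intermediate-value argument on $a\mapsto\Theta(a,\lambda)$, and the $2k$ lower bound follows as in \cite{BP}.
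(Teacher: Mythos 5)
Your overall scaffolding (shooting in the initial value $a$, an intermediate--value/time--map count on $a\mapsto\Theta(a,\lambda)$, the factor $2$ coming from the two branches of each connected component) does match the architecture of the paper's proof, which fixes $T_0\in(\pi/4,\theta_1)$, chooses $\al_0(\ep)$ so that the $(2k-1)$-st critical point sits at $T_0$, and then shows each component of $\mathcal{A}_k(\ep)$ has $\Ti\to\pi/2$ at both endpoints while $\Ti_{min,\ep}^k\to\pi/4$, so the graph crosses $\ti=\ti_1$ at least twice. But two of your key mechanisms are wrong, and the actual heart of the argument is missing. First, your step (2) has a fatal sign error: for $u$ small the equation reads $u''+2\cot(2\ti)u'=\lambda(u^5-u)\approx|\lambda|\,u$, not $-|\lambda|\,u$, so small solutions do \emph{not} oscillate with $\sim|\lambda|^{1/2}$ zeros; they are of exponential type, stay exponentially small (Lemma 4.2 of the paper gives $u_\ep\le e^{-\beta/\ep}$ on such stretches) and then shoot up into an $O(\ep)$-wide spike. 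Oscillation occurs only around $u\equiv 1$ (the linearization $w''+2\cot(2\ti)w'=4\lambda w$ of Section 2), and that is used to count \emph{critical points}, not zeros of $u$. Second, your step (1) rescales at $\ti=0$ and invokes Aubin--Talenti bubbles for $\Delta_{\mathbb{R}^3}v=-v^5$; besides the fact that $2\cot(2\ti)\approx 1/\ti$ gives the two-dimensional, not three-dimensional, radial Laplacian there, the paper never blows up at the origin at all. The relevant rescaling is $\ti=T_0+\ep s$ at the spike location, and the limit profile is the bounded solution $Z_0$ of the autonomous equation $Z''+Z^5-Z=0$ with $Z_0(0)=3^{1/4}$ decaying to $0$ as $s\to-\infty$ --- a spike of height tending to $\sigma=3^{1/4}$, not a concentrating bubble.

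The missing central idea is the energy balance that decides whether the solution actually crosses zero after its last spike. With $E_\al(\ti)=\tfrac12(u')^2-\lambda F(u)$, $F(u)=\tfrac16u^6-\tfrac12u^2$, one has $E'=-2\cot(2\ti)(u')^2$, which is \emph{negative} on $(0,\pi/4)$ and \emph{positive} on $(\pi/4,\pi/2)$. Placing the spike at $T_0>\pi/4$, the paper shows $F(u_0(\ep))=F(\al_0)+J_1(\ep)+J_2(\ep)$ where the losses $F(\al_0)$ and $J_1$ are $O(e^{-2\beta/\ep})$ while the gain $J_2\ge B\ep$ from the interval $(\pi/4,T_0)$, forcing $F(u_0(\ep))>0$, i.e. $u_0(\ep)>\sigma$; only then does Lemma 4.3 produce a zero at distance $O(\sqrt{\ep})$ past $T_0$. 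Without this quantitative step your ``uniform continuation estimate $\Theta(a,\lambda)<\theta_1$'' is exactly the assertion you have not proved, and the sign structure of the damping across $\pi/4$ (which is also what makes Theorem \ref{nonexis} true) is the only thing that makes it provable. As written, the proposal would not close.
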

Our approach mainly relies upon a method that has been successfully used in \cite{BP}. First we use this method to show 
that there exists at least $2$ solutions of problem (\ref{toro}) with initial value in the interval $(0,1)$ that have a single spike or maximum.
The next step is to prove the theorem in the case $k=2$ using the same techniques. Finally the theorem follows by induction.

\begin{figure}[th]
    \centering
    \includegraphics[width=0.5\textwidth]{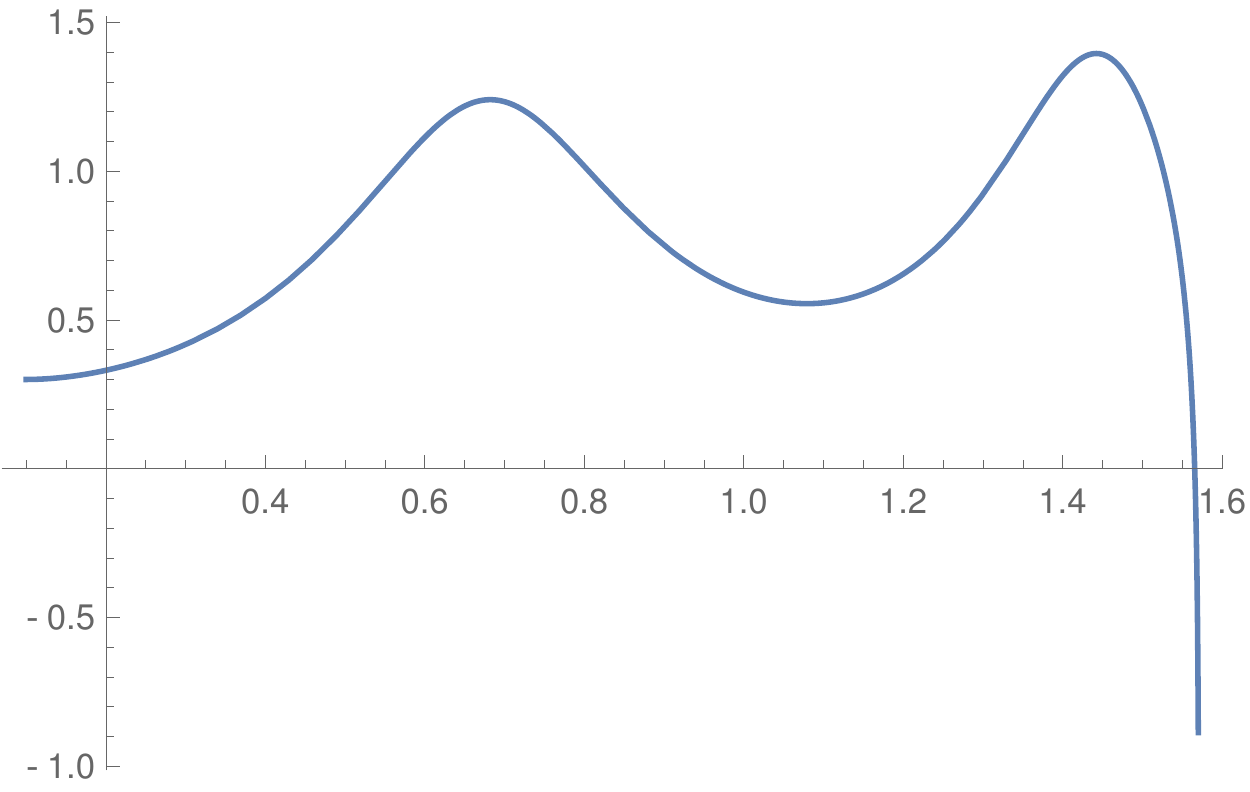}
\vspace*{8pt}
\caption{Two-spike solution $u$ of problem (\ref{toro}) for $\lambda=-25$ and $u(0)=0.3$}
    \label{fig:2-spike}
\end{figure}
 \begin{figure}[th]
    \centering
    \includegraphics[width=0.5\textwidth]{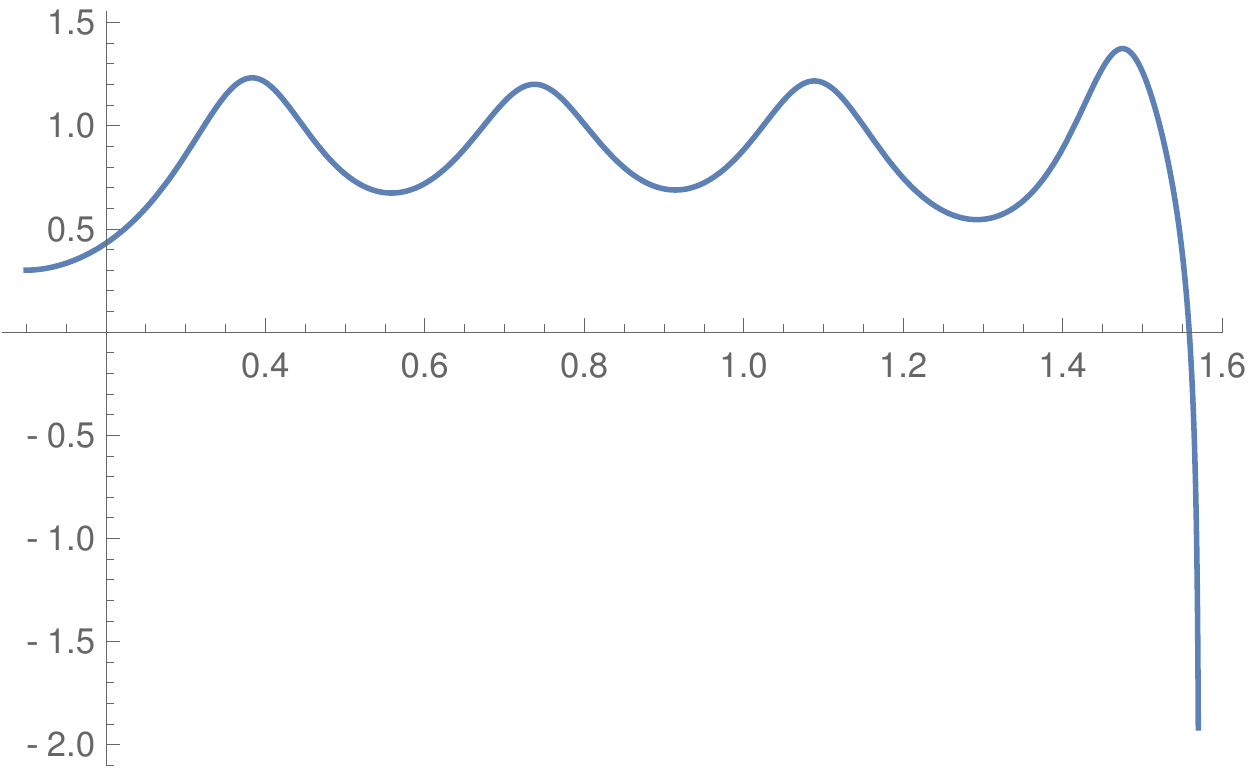}
    \vspace*{8pt}
    \caption{Four-spike solution $u$ of problem (\ref{toro}) for $\lambda=-100$ and $u(0)=0.3$}
    \label{fig:4-spike}
\end{figure}

The paper is organized as follows. 
In section $2$  we will study properties of the ground state solutions and prove Theorem \ref{teoesf}.
Section $3$ contains some results about auxiliary linear problems, that will help us to prove the main theorem in next section. 
Theorem \ref{nonexis} will be proved in section $4$, as well as Theorem \ref{ksoluciones}.


\section{Positive solutions on $\mathbb{S}^3$}

In this section we present a detailed study of the problem obtained by linearizing Eq. (\ref{toro}) around 
the nontrivial constant solution when $\lambda<0$. Then we use these results to prove Theorem \ref{teoesf}. 
Let $\alpha \in (0,1)$, $\lambda <0$,  and denote by $u_{\alpha , \lambda} (\ti )$ the solution of:

\begin{equation}
\left\{
\begin{array}{rcl}
 u''(\theta) + 2 \frac{ \cos(2\theta)}{\sin(2\theta)} u'(\theta) &=& \lambda \left(u(\theta)^5 - u(\theta)\right)\\
 u(0)&=&\al\\
 u'(0)&=&0\\
\end{array}
\right.
\label{esfera2}
\end{equation}

There is a constant solution $u_{1, \lambda} \equiv 1$, and it is important to understand the behavior of the solutions $u_{\al ,\lambda} (\ti)$ with $\al$ close to $1$.
With this aim, consider the function
\[
 w_\lambda(\theta)=\frac{d}{d\alpha} \Big|_{\al=1} u_{\al , \lambda }(\theta ). 
\]
Then $w_\lambda$ is the solution of the linear problem

\begin{equation}
\label{eqw}
\left\{
\begin{array}{rcl}
 w''(\theta) + 2 \frac{ \cos(2\theta)}{\sin(2\theta)} w'(\theta) &=&4\lambda w, \\
w(0)&=&1, \\
w'(0)&=&0.\\
\end{array}
\right.
\end{equation}

This is the eigenvalue equation for $ \Delta_{\mathbb{S}^3}$ restricted to functions invariant by the $\mathbb{T}^2$-action.
It can be understood for instance adapting  the techniques used by J. Petean in \cite{J} (for the case of radial functions). We will
sketch the proofs briefly for completeness.

Let $\lambda_n:=-n(n+1)$. 

If we denote by $F_c (\varphi ) (\theta ) =  {\varphi}'' (\theta )+ 2 \frac{ \cos(2\theta)}{\sin(2\theta)} {\varphi}'(\theta)  -c \varphi (\theta )$ then by 
a direct computation:

$$F_c (\cos^k (2\theta ))= (4\lambda_n - c ) \cos^k (2\theta ) +4k(k-1) \cos^{k-2} (2\theta ) .$$

\begin{lem}
The solution $w_{\lambda_n}$ of (\ref{eqw}) is a linear combination of $\cos^{n-2j}(2\theta)$, where $0 \leq 2j \leq n$.
\end{lem}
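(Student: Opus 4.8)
The plan is to prove the statement by induction on $n$, using the operator identity for $F_c$ that has just been recorded. The key observation is that $w_{\lambda_n}$ is the unique solution of the linear ODE \eqref{eqw} with $c=4\lambda_n$ satisfying $w(0)=1$, $w'(0)=0$; by uniqueness of solutions of this regular-singular initial value problem, it suffices to exhibit \emph{one} solution of the stated form and check the initial conditions. So the real content is to produce an explicit polynomial in $\cos(2\theta)$ of the claimed parity that solves $F_{4\lambda_n}(\varphi)=0$ with $\varphi(0)=1$, $\varphi'(0)=0$.

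First I would set $c=4\lambda_n$ in the identity $F_c(\cos^k(2\theta)) = (4\lambda_n-c)\cos^k(2\theta) + 4k(k-1)\cos^{k-2}(2\theta)$, which kills the first term and leaves $F_{4\lambda_n}(\cos^k(2\theta)) = 4k(k-1)\cos^{k-2}(2\theta)$; note incidentally that this already forces us to allow $\lambda_m$ for various $m$, but here I only need the single value $c=4\lambda_n$. Then I would look for $\varphi(\theta) = \sum_{j\ge 0} a_j \cos^{n-2j}(2\theta)$ (finite sum, $0\le 2j\le n$) with $a_0=1$, and impose $F_{4\lambda_n}(\varphi)=0$. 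Because $F_{4\lambda_n}$ maps $\cos^{n-2j}(2\theta)$ to a multiple of $\cos^{n-2j-2}(2\theta)$ plus (since $c=4\lambda_n$ only annihilates the degree-$n$ term, not the lower ones) a diagonal contribution $(4\lambda_n - 4\lambda_{n-2j})\cos^{n-2j}(2\theta)$ coming from the general $F_c(\cos^k)$ with $c=4\lambda_n\ne 4\lambda_{n-2j}$ for $j\ge 1$ — wait, more carefully: $F_{4\lambda_n}(\cos^{k}(2\theta)) = 4(\lambda_n-\lambda_{k})\cos^{k}(2\theta)+4k(k-1)\cos^{k-2}(2\theta)$ with $k=n-2j$. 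Collecting the coefficient of each power $\cos^{n-2j}(2\theta)$ gives a triangular recursion: the coefficient of $\cos^{n-2j}$ in $F_{4\lambda_n}(\varphi)$ is $4(\lambda_n-\lambda_{n-2j})a_j + 4(n-2j+2)(n-2j+1)a_{j-1}$, and setting this to zero determines $a_j$ from $a_{j-1}$ since $\lambda_n - \lambda_{n-2j}\ne 0$ for $1\le j\le \lfloor n/2\rfloor$. This produces a well-defined finite polynomial $\varphi$ of the required form.

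It then remains to verify the initial conditions. Since $\varphi$ is a polynomial in $\cos(2\theta)$, we have $\varphi'(\theta) = -2\sin(2\theta)\,P'(\cos 2\theta)$ for the corresponding one-variable polynomial $P$, which vanishes at $\theta=0$ automatically, so $\varphi'(0)=0$ for free. For $\varphi(0)=1$: at $\theta=0$, $\cos(2\theta)=1$, so $\varphi(0)=\sum_j a_j$; this need not equal $1$ for the particular normalization $a_0=1$, so I would instead rescale $\varphi$ by the constant $(\sum_j a_j)^{-1}$ — provided $\sum_j a_j\ne 0$. Checking $\sum_j a_j\ne 0$ is the one point that needs a small argument: one can either show each $a_j$ has a definite sign (the recursion $a_j = -\frac{(n-2j+2)(n-2j+1)}{\lambda_n-\lambda_{n-2j}}a_{j-1}$ with $\lambda_n-\lambda_{n-2j}<0$ shows $a_j$ and $a_{j-1}$ have the same sign, hence all $a_j>0$), so the sum is strictly positive. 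After this rescaling, $\varphi$ solves \eqref{eqw} with $c=4\lambda_n$ and the correct initial data, and uniqueness gives $w_{\lambda_n}=\varphi$, which is the asserted linear combination of $\cos^{n-2j}(2\theta)$. The main obstacle is purely bookkeeping: getting the triangular recursion and its nonvanishing denominators right, and justifying that a polynomial solution with the right initial conditions must coincide with $w_{\lambda_n}$ by uniqueness for the singular ODE at $\theta=0$.
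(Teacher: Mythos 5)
Your overall strategy --- use the displayed identity for $F_c$ to set up a triangular recursion for the coefficients of a polynomial in $\cos(2\theta)$, and then invoke uniqueness of the bounded solution of the singular initial value problem at $\theta=0$ --- is exactly what the paper is sketching (the paper offers no proof beyond the identity itself), and the triangular structure and the nonvanishing of the denominators are correct. There is, however, one genuine gap, and it originates in a sign error. A direct computation gives
\[
F_c\left(\cos^k(2\theta)\right)=(4\lambda_k-c)\cos^k(2\theta)+4k(k-1)\cos^{k-2}(2\theta),
\]
with subscript $k$, not $n$, on the first $\lambda$ (the paper's display has a typo here). Hence, taking $c=4\lambda_n$, the diagonal coefficient of $\cos^{n-2j}(2\theta)$ is $4(\lambda_{n-2j}-\lambda_n)=8j(2n-2j+1)>0$, not $4(\lambda_n-\lambda_{n-2j})<0$ as you wrote. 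The recursion is therefore
\[
a_j=-\frac{(n-2j+2)(n-2j+1)}{2j(2n-2j+1)}\,a_{j-1},
\]
so consecutive coefficients have \emph{opposite} signs and your claim that all $a_j>0$ is false; for instance $n=2$ gives $\varphi=\cos^2(2\theta)-\frac{1}{3}$. Consequently $\sum_j a_j$ is an alternating sum, and your justification that $\varphi(0)=\sum_j a_j\neq 0$ --- the one step you yourself flagged as needing an argument, and the step on which the normalization $\varphi(0)=1$ depends --- collapses.

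The gap is easy to close without any sign bookkeeping. The point $\theta=0$ is a regular singular point of (\ref{eqw}) with indicial roots $0,0$, so up to a scalar there is exactly one solution bounded near $0$ (the second Frobenius solution carries a $\log\theta$ singularity), and that solution is $w_{\lambda_n}$. Your polynomial $\varphi$ is smooth and nontrivial (its leading coefficient is $a_0=1$), so $\varphi=\varphi(0)\,w_{\lambda_n}$; if $\varphi(0)=0$ this forces $\varphi\equiv 0$, a contradiction, hence $\varphi(0)\neq 0$ and the rescaling goes through. Alternatively, and more directly, the substitution $t=\cos(2\theta)$ transforms (\ref{eqw}) with $\lambda=\lambda_n$ into the Legendre equation $(1-t^2)W''-2tW'+n(n+1)W=0$ with $W$ regular at $t=1$ and $W(1)=1$, so $w_{\lambda_n}(\theta)=P_n(\cos 2\theta)$, and the stated parity is the classical parity of the Legendre polynomial $P_n$.
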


It then follows that $w_{\lambda_n} (\pi /2 ) =(-1)^n$. If $n$ is odd then $w_{\lambda_n} (\pi /4 )=0$ and if 
$n$ is even then $w_{\lambda_n}'  (\pi /4 )=0$.

It follows from Sturm-Liouville theory that the number of zeros of $w_{\lambda}$ in $(0,\pi /2)$  is a non-increasing function of 
$\lambda$ ($< 0$). It is then easy to see that:

\begin{lem}
The solution $w_{\lambda_n}$ has exactly $n$ zeros in the interval $(0, \frac{\pi}{2})$ and therefore exactly $n-1$ critical points in $(0, \frac{\pi}{2})$.
\end{lem}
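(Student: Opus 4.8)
The plan is to combine the explicit description from the previous lemma with the Sturm--Liouville monotonicity statement that was just recalled. First I would set up the problem as a genuine Sturm--Liouville eigenvalue problem: writing $p(\theta)=\sin(2\theta)$, equation \eqref{eqw} is $\left(p(\theta)w'\right)' = 4\lambda\, p(\theta)\, w$ on $(0,\pi/2)$, which after the substitution making $\theta$ the geodesic variable is exactly the eigenvalue equation for $\Delta_{\mathbb{S}^3}$ acting on $\mathbb{T}^2$-invariant functions, with Neumann-type behavior forced at the singular endpoints $\theta=0$ and $\theta=\pi/2$ (where $p$ vanishes). The values $\lambda_n=-n(n+1)$ are precisely the eigenvalues of this operator, with multiplicity one in the invariant sector, because the corresponding eigenfunctions on $\mathbb{S}^3$ are the restrictions of harmonic polynomials and the $\mathbb{T}^2$-invariant ones are spanned by a single function for each $n$ — namely the polynomial in $\cos(2\theta)$ produced by the previous lemma.

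Next I would run the induction/monotonicity argument. By standard Sturm--Liouville oscillation theory, the $n$-th eigenfunction of such a regular (or here, appropriately singular-but-self-adjoint) problem has exactly $n$ interior zeros; equivalently, as was recalled just before the statement, the number of zeros of $w_\lambda$ in $(0,\pi/2)$ is a non-increasing, integer-valued function of $\lambda<0$ that jumps down by one exactly as $\lambda$ decreases past each eigenvalue $\lambda_n$. Since $w_0\equiv 1$ has zero zeros and the first negative eigenvalue is $\lambda_1=-2$, induction on $n$ gives that $w_{\lambda_n}$ has exactly $n$ zeros: one checks the base case $n=0$ (or $n=1$) directly, and the inductive step is the observation that between consecutive eigenvalues the zero count is constant, so it equals $n$ throughout $(\lambda_{n+1},\lambda_n]$ and in particular at $\lambda_n$ itself. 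The statement about critical points then follows because $w_{\lambda_n}$ is smooth and its zeros are simple (two consecutive simple zeros of a solution of a second-order linear ODE sandwich exactly one critical point, and the behavior at the endpoints — $w'_{\lambda_n}(0)=0$ and the parity relations $w_{\lambda_n}(\pi/2)=(-1)^n$, with $w_{\lambda_n}(\pi/4)=0$ or $w'_{\lambda_n}(\pi/4)=0$ according to the parity of $n$, both noted above — account for the endpoint critical point), yielding exactly $n-1$ critical points in the open interval.

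The main obstacle I anticipate is \emph{not} the oscillation count per se but justifying the Sturm--Liouville framework at the singular endpoints $\theta=0,\pi/2$, where the weight $\sin(2\theta)$ degenerates. One must argue that the natural boundary conditions there (boundedness of $w$, which forces $w'\to 0$) make the operator self-adjoint with discrete spectrum, so that the classical oscillation theorem applies; this is exactly the point the author defers to Petean's techniques in \cite{J}. The cleanest way around it is to avoid abstract spectral theory altogether: use only the comparison/monotonicity principle already stated (zeros of $w_\lambda$ non-increasing in $\lambda$), pin down the jump points by exhibiting, for each $n$, that $w_{\lambda_n}$ is the explicit polynomial in $\cos(2\theta)$ from the previous lemma and counting \emph{its} zeros directly — e.g. $w_{\lambda_1}$ is a multiple of $\cos(2\theta)$ with a single zero at $\pi/4$, $w_{\lambda_2}$ is a multiple of $\cos^2(2\theta)-\tfrac13$ (up to normalization) with two zeros, and so on — and then filling in the intermediate $\lambda$ by monotonicity. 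That reduces the whole lemma to a finite, transparent computation plus the one-line Sturm comparison, which is presumably what "it is then easy to see" is pointing at.
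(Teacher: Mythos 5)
Your proposal matches the paper's own treatment: the paper likewise deduces the lemma from the explicit representation of $w_{\lambda_n}$ as a degree-$n$ polynomial in $\cos(2\theta)$ combined with the Sturm--Liouville fact that the number of zeros of $w_\lambda$ in $(0,\pi/2)$ is non-increasing in $\lambda$, and gives no further detail beyond ``it is then easy to see.'' One slip to correct: since the zero count is non-increasing in $\lambda$, it jumps \emph{up} by one as $\lambda$ decreases through each eigenvalue, not down --- your subsequent conclusion that the count equals $n$ throughout $(\lambda_{n+1},\lambda_n]$ is the right one.
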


And using again Sturm-Liouville theory and the previous comments it follows:

\begin{lem}
\label{extrloc}
If $\lambda \in [\lambda_{2n+2}, \lambda_{2n})$ then $w_{\lambda}$  has exactly $n$ critical points in the interval $(0, \pi/4)$.

\end{lem}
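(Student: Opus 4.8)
The plan is to determine the number of critical points first for the explicit solutions $w_{\lambda_m}$ and then to propagate the answer to all nearby $\lambda$ by a Sturm--Liouville argument applied to $w_\lambda'$.

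\emph{Explicit solutions.} Equation (\ref{eqw}) is invariant under $\theta\mapsto\pi/2-\theta$, because $\cos\bigl(2(\pi/2-\theta)\bigr)=-\cos(2\theta)$ and $\sin\bigl(2(\pi/2-\theta)\bigr)=\sin(2\theta)$. Since $w_{\lambda_m}$ is a linear combination of the $\cos^{m-2j}(2\theta)$, it follows that $w_{\lambda_m}(\pi/2-\theta)=(-1)^m w_{\lambda_m}(\theta)$, so the zeros and the critical points of $w_{\lambda_m}$ in $(0,\pi/2)$ are symmetric about $\pi/4$, with $w_{\lambda_m}(\pi/4)=0$ for $m$ odd and $w_{\lambda_m}'(\pi/4)=0$ for $m$ even, as already observed. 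Combining this with the fact that $w_{\lambda_m}$ has exactly $m-1$ critical points in $(0,\pi/2)$, the symmetric distribution forces $w_{\lambda_m}$ to have exactly $\lfloor(m-1)/2\rfloor$ critical points in $(0,\pi/4)$; in particular both $w_{\lambda_{2n+1}}$ and $w_{\lambda_{2n+2}}$ have exactly $n$ of them.

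\emph{A monotone count.} Write $v_\lambda=w_\lambda'$ and let $N(\lambda)$ denote the number of zeros of $v_\lambda$ in $(0,\pi/4)$, equivalently the number of critical points of $w_\lambda$ there. Differentiating (\ref{eqw}) one gets
\[
\bigl(\sin(2\theta)\,v'\bigr)'=\Bigl(4\lambda\sin(2\theta)+\tfrac{4}{\sin(2\theta)}\Bigr)v ,
\]
and near $\theta=0$ the relevant solution satisfies $v_\lambda(\theta)=2\lambda\theta+O(\theta^3)$ (since $w_\lambda(\theta)=1+\lambda\theta^2+O(\theta^4)$), so it does not vanish for small $\theta>0$. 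As $\lambda$ decreases the coefficient on the right-hand side decreases, so by Sturm comparison $N(\lambda)$ is non-increasing in $\lambda<0$. Each interior zero of $v_\lambda$ is simple (if $v_\lambda(\theta_0)=v_\lambda'(\theta_0)=0$ then (\ref{eqw}) forces $w_\lambda(\theta_0)=0$ too, hence $w_\lambda\equiv0$), hence depends continuously on $\lambda$, so $N(\lambda)$ changes only when such a zero reaches an endpoint. It cannot reach $0$, so $N$ changes only at $\lambda$ with $w_\lambda'(\pi/4)=0$; reflecting about $\pi/4$ as above, $w_\lambda'(\pi/4)=0$ makes $w_\lambda$ symmetric about $\pi/4$ and hence $w_\lambda'(\pi/2)=0$, so $\lambda$ is an eigenvalue of (\ref{eqw}) with the natural condition at $\pi/2$ --- these being exactly the $\lambda_m$ (by Sturm--Liouville theory, since $w_{\lambda_m}$ has exactly $m$ zeros in $(0,\pi/2)$) --- and the symmetry forces $m$ even. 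Thus $N$ is constant on every interval $(\lambda_{2m+2},\lambda_{2m})$.

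\emph{Conclusion.} The open interval $(\lambda_{2n+2},\lambda_{2n})$ contains no even-indexed $\lambda_m$, so $N$ is constant there; since it contains $\lambda_{2n+1}$ with $N(\lambda_{2n+1})=n$, we get $N\equiv n$ on $(\lambda_{2n+2},\lambda_{2n})$, and also $N(\lambda_{2n+2})=n$. Hence $N(\lambda)=n$ for every $\lambda\in[\lambda_{2n+2},\lambda_{2n})$, as claimed. The step I expect to be the main obstacle is the last part of the middle paragraph: proving that $N$ can drop only when $\lambda$ crosses an \emph{even}-indexed eigenvalue $\lambda_m$ --- equivalently, that $N$ stays equal to $n$ on all of $(\lambda_{2n+1},\lambda_{2n})$ instead of falling to $n-1$ before $\lambda$ reaches $\lambda_{2n}$. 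The bare monotonicity of $N$ leaves this gap; closing it is exactly what the reflection trick, the identification of the full spectrum, and the control of $v_\lambda$ at the singular endpoint $\theta=0$ are needed for.
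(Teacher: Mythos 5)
Your proposal is correct and follows the same route the paper intends (the paper itself only gestures at the proof): count the critical points of the explicit eigenfunctions $w_{\lambda_m}$ via their $\cos(2\theta)$-polynomial form and the reflection $\theta\mapsto\pi/2-\theta$, then use Sturm--Liouville/continuity to show the count in $(0,\pi/4)$ can only change when $\lambda$ crosses an even-indexed eigenvalue. Your version is in fact more detailed than the paper's one-line justification --- in particular the observation that zeros of $w_\lambda'$ are simple and can only enter or leave through $\theta=\pi/4$, at which point symmetry forces $\lambda=\lambda_{2m}$, is exactly the missing step the paper leaves implicit.
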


Denote by
\[
\tau_1^0(\lambda)<\tau_2^0(\lambda)< \dots <\tau_n^0 (\lambda)
\] the critical points of $w_\lambda$ in $(0, \pi /2 )$. 
Using the uniform continuity of the solution of problem (\ref{esfera2}) with respect to the initial value $\al$ we obtain:

\begin{lem}
\label{lim}
Suppose that $w_\lambda$ has a critical point $\tau_k^0(\lambda)$ for some $k\geq 1$. Then for $\al<1$ sufficiently close to $1$, 
the solution $u_{\al ,\lambda}$ has a critical point $\tau_k(\al)$ and
\[ \tau_k(\al) \to \tau_k^0(\lambda), \hbox{  as } \al \to 1.\]
\end{lem}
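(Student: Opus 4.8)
The plan is to exploit the differentiable dependence of solutions of (\ref{esfera2}) on the initial value $\al$, after compensating for the fact that the linearization point $u_{1,\lambda}\equiv 1$ is a degenerate critical configuration (its derivative vanishes identically, so we cannot track critical points directly). Fix $\delta>0$ small enough that $\tau_k^0(\lambda)\in(0,\pi/2-\delta)$. On the compact interval $[0,\pi/2-\delta]$ the coefficient $2\cos(2\theta)/\sin(2\theta)$ is smooth, and, handling the mild singularity at $\theta=0$ exactly as in \cite{J} and \cite{BP}, the flow map $\al\mapsto u_{\al,\lambda}$ is of class $C^1$ from a neighbourhood of $1$ into $C^1([0,\pi/2-\delta])$. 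Since by definition $w_\lambda=\partial_\al u_{\al,\lambda}\big|_{\al=1}$ and $u_{1,\lambda}\equiv 1$, the difference quotient
\[
v_\al:=\frac{u_{\al,\lambda}-1}{\al-1}
\]
converges to $w_\lambda$ in $C^1([0,\pi/2-\delta])$ as $\al\to 1$; equivalently, one checks that $v_\al$ solves $v''+2\frac{\cos(2\theta)}{\sin(2\theta)}v' = 4\lambda v + \lambda(\al-1)v^2 g(u_{\al,\lambda})$ with $v(0)=1$, $v'(0)=0$ for a fixed polynomial $g$, and the perturbation term tends to $0$ uniformly on $[0,\pi/2-\delta]$ by continuous dependence, so $v_\al\to w_\lambda$ in $C^1$. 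In particular $v_\al'\to w_\lambda'$ uniformly on $[0,\pi/2-\delta]$, and for $\al\neq 1$ the critical points of $u_{\al,\lambda}$ in $(0,\pi/2-\delta)$ are precisely the zeros of $v_\al'$, since $u_{\al,\lambda}'=(\al-1)v_\al'$.

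The second step is to show that $\tau_k^0(\lambda)$ is a \emph{simple} zero of $w_\lambda'$. Evaluating (\ref{eqw}) at $\tau_k^0(\lambda)$, where $w_\lambda'(\tau_k^0(\lambda))=0$, gives $w_\lambda''(\tau_k^0(\lambda))=4\lambda\, w_\lambda(\tau_k^0(\lambda))$. If $w_\lambda(\tau_k^0(\lambda))$ vanished, then $w_\lambda$ and $w_\lambda'$ would both vanish at an interior point where (\ref{eqw}) is regular, forcing $w_\lambda\equiv 0$ by uniqueness and contradicting $w_\lambda(0)=1$. Since $\lambda<0$, it follows that $w_\lambda''(\tau_k^0(\lambda))\neq 0$, so $w_\lambda'$ changes sign strictly at $\tau_k^0(\lambda)$.

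The conclusion now follows from a standard sign-change (or implicit function) argument. Choosing $\rho>0$ small with $[\tau_k^0(\lambda)-\rho,\tau_k^0(\lambda)+\rho]\subset(0,\pi/2-\delta)$ and $w_\lambda'$ of opposite strict signs at the endpoints, the uniform convergence $v_\al'\to w_\lambda'$ implies that for $\al<1$ close enough to $1$ the function $v_\al'$ also has opposite signs there, hence a zero $\tau_k(\al)$ in that interval, which is a critical point of $u_{\al,\lambda}$; letting $\rho\to 0$ with $\al$ correspondingly close to $1$ gives $\tau_k(\al)\to\tau_k^0(\lambda)$. (Equivalently, apply the implicit function theorem to $(\theta,\al)\mapsto v_\al'(\theta)$, extended by $w_\lambda'$ at $\al=1$, using $\partial_\theta v_\al'\big|_{(\tau_k^0(\lambda),1)}=w_\lambda''(\tau_k^0(\lambda))\neq 0$ to produce a $C^1$ branch $\al\mapsto\tau_k(\al)$.) Since the finitely many critical points $\tau_1^0(\lambda)<\dots<\tau_n^0(\lambda)$ are all simple zeros of $w_\lambda'$, these branches remain disjoint and order-preserving for $\al$ near $1$, which justifies the labelling. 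The only delicate point is the first step — legitimizing the passage to the difference quotient near the degenerate solution $u_{1,\lambda}\equiv 1$, with uniformity on intervals approaching but excluding the singular endpoint $\theta=\pi/2$; the rest is elementary.
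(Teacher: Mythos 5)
Your proof is correct, and it supplies precisely the details that the paper omits: the paper obtains this lemma in one line from ``the uniform continuity of the solution of problem (\ref{esfera2}) with respect to the initial value $\al$'', with no further argument. Your two additions are exactly the right ones. First, you observe that plain continuous dependence is insufficient here, because the limiting solution $u_{1,\lambda}\equiv 1$ has identically vanishing derivative, so one must pass to the rescaled difference quotient $v_\al=(u_{\al,\lambda}-1)/(\al-1)$, whose critical points coincide with those of $u_{\al,\lambda}$ for $\al\neq 1$ and which converges in $C^1$ to $w_\lambda$ on compact subintervals of $[0,\pi/2)$; your computation of the equation satisfied by $v_\al$ and of the vanishing perturbation term is accurate. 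Second, you verify that each $\tau_k^0(\lambda)$ is a nondegenerate critical point of $w_\lambda$, via $w_\lambda''(\tau_k^0)=4\lambda\, w_\lambda(\tau_k^0)\neq 0$ (using that $w_\lambda$ and $w_\lambda'$ cannot vanish simultaneously at an interior point where the equation is regular), which is what makes the zero of $v_\al'$ persist and converge, and also keeps the branches $\tau_1(\al)<\dots<\tau_n(\al)$ separated so the labelling is consistent. This is the same underlying idea the paper intends (differentiable dependence on $\al$, with $w_\lambda$ as the $\al$-derivative at $\al=1$), but carried out in full; the nondegeneracy step in particular is genuinely needed and is nowhere stated in the paper.
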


\begin{obs}
 $\tau_k=\tau_k(\al)$ is a continuous function (where it is defined)  and from (\ref{esfera2}) it is easy to see that
$u_{\al , \lambda}(\tau_j)>1$ if  $j$ is odd, and $u_{\al , \lambda}(\tau_j)<1$ if $j$ is even.
\end{obs}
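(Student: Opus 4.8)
The plan is to prove the two assertions together, the sign statement first: establishing that $u_{\al,\lambda}(\tau_j)\neq 1$ is exactly what shows each $\tau_j$ is a \emph{non-degenerate} critical point, and continuity of $\tau_k(\cdot)$ then follows from the implicit function theorem. Throughout write $u=u_{\al,\lambda}$, recall $\lambda<0$, and recall that the $\tau_j$ lie in $(0,\pi/2)$, where $\sin(2s)>0$.

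For the sign statement, rewrite the equation in (\ref{esfera2}) in divergence form, $(\sin(2\theta)\,u')'=\lambda\sin(2\theta)(u^5-u)$, and integrate from a critical point: if $u'(\tau)=0$ then
\[
\sin(2\theta)\,u'(\theta)=\lambda\int_{\tau}^{\theta}\sin(2s)\,\bigl(u(s)^5-u(s)\bigr)\,ds .
\]
\emph{Base case:} near $\theta=0$ one has $u(0)=\al<1$, hence $u^5-u<0$, so taking $\tau=0$ the right-hand side is strictly positive for small $\theta>0$; thus $u'>0$ on $(0,\tau_1)$ and $u$ is strictly increasing there. If $u(\tau_1)\le 1$, then $u<1$ on all of $[0,\tau_1)$, the integrand $\lambda\sin(2s)(u^5-u)$ is strictly positive on $(0,\tau_1)$, and the identity gives $\sin(2\tau_1)u'(\tau_1)>0$, contradicting $u'(\tau_1)=0$. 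Hence $u(\tau_1)>1$, and then $u''(\tau_1)=\lambda\bigl(u(\tau_1)^5-u(\tau_1)\bigr)<0$, so $\tau_1$ is a strict local maximum and $u$ decreases just after $\tau_1$. \emph{Induction:} suppose $\tau_{j-1}$ has been shown to be a local minimum with $u(\tau_{j-1})<1$ (the case "local maximum with $u(\tau_{j-1})>1$" being symmetric). Then $u''(\tau_{j-1})>0$, so $u$ is strictly increasing on $(\tau_{j-1},\tau_j)$ (it cannot change monotonicity, these being consecutive critical points). Applying the identity with $\tau=\tau_{j-1}$ at $\theta=\tau_j$ gives $\int_{\tau_{j-1}}^{\tau_j}\sin(2s)(u^5-u)\,ds=0$; since $\sin(2s)>0$ and $u$ is monotone there, $u-1$ must change sign on $[\tau_{j-1},\tau_j)$, and as $u(\tau_{j-1})<1$ this forces $u(\tau_j)>1$ (the value $u(\tau_j)=1$ being excluded by the same sign argument as in the base case). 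Then $u''(\tau_j)<0$, so $\tau_j$ is a strict local maximum. By induction $u_{\al,\lambda}(\tau_j)>1$ for $j$ odd and $u_{\al,\lambda}(\tau_j)<1$ for $j$ even.

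In particular $u_{\al,\lambda}(\tau_j)\neq 1$, so $u''_{\al,\lambda}(\tau_j)=\lambda\bigl(u_{\al,\lambda}(\tau_j)^5-u_{\al,\lambda}(\tau_j)\bigr)\neq 0$; that is, each $\tau_j(\al)$ is a simple zero of $u'_{\al,\lambda}$. Since the solution of (\ref{esfera2}) depends smoothly on the initial value $\al$ (the point $\theta=0$ being a regular singular point, this is handled exactly as in \cite{J}), the implicit function theorem applied to $\partial_\theta u_{\al,\lambda}(\theta)=0$ shows that each $\tau_j$ is locally a $C^1$, in particular continuous, function of $\al$. Because all critical points in $(0,\pi/2)$ are simple, none can be created, disappear, or collide as $\al$ varies over the open set on which $\tau_k$ is defined, so the ordering $\tau_1<\tau_2<\dots$ is preserved and $\tau_k(\al)$ is continuous there.

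The only step requiring a little care — and the one I would expect a careful reader to want spelled out — is the behaviour at the singular endpoint $\theta=0$, both for the base case of the sign induction and for the smooth dependence on $\al$; but once the equation is put in the divergence form above this is entirely routine, and indeed parallels the treatment in \cite{J}.
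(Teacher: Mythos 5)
Your argument is correct, and it supplies a complete justification for a statement the paper only asserts ("it is easy to see") without proof: the divergence form $(\sin(2\theta)u')'=\lambda\sin(2\theta)(u^5-u)$, the integral identity between consecutive critical points, and the resulting alternation max/min with $u(\tau_j)>1$ resp.\ $u(\tau_j)<1$ all check out, and the non-degeneracy $u''(\tau_j)=\lambda(u(\tau_j)^5-u(\tau_j))\neq 0$ plus the implicit function theorem is exactly the mechanism the paper itself invokes later (in Lemma \ref{alfaestr}) for continuity of $\tau_k$. One remark: the exclusion of $u(\tau_j)=1$ can be obtained in one line from uniqueness of the initial value problem at the interior point $\tau_j$ (data $u=1$, $u'=0$ forces $u\equiv 1$, impossible since $\al<1$), which shortens both the base case and the inductive step; combined with the sign of $u''(\tau_j)$ this immediately gives "maxima above $1$, minima below $1$", and your integral identity is then only needed to see that the critical points alternate starting with a maximum. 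The only slightly loose point is the final sentence on global continuity of $\tau_k$ ("none can be created, disappear, or collide"): simplicity of the critical points rules out interior collisions, but the possibility that the count changes through the endpoint $\theta=\pi/2$ is precisely what restricts $\tau_k$ to the open set "where it is defined", so this caveat should be kept explicit rather than asserted away; as stated it does not affect the conclusion claimed in the Remark.
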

\begin{lem}
\label{simetric}
If for any $\al \in (0,1)$ the solution $u_{\al}$ of problem (\ref{esfera2}) satisfies $u_{\al} '(\frac{\pi}{4})=0$, 
then $u_{\al} '(\frac{\pi}{2})=0$ and $u_{\al} (\ti)=u_{\al} (\frac{\pi}{2}-\ti)$ for $\ti \in [ \frac{\pi}{4},\frac{\pi}{2} ).$
\end{lem}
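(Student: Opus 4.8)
The plan is to exploit the reflection symmetry of the ODE in \eqref{esfera2} about the point $\ti = \pi/4$. Observe that the coefficient $2\frac{\cos(2\ti)}{\sin(2\ti)}$ of the first-order term changes sign under $\ti \mapsto \pi/2 - \ti$: indeed $\cos(2(\pi/2-\ti)) = -\cos(2\ti)$ while $\sin(2(\pi/2-\ti)) = \sin(2\ti)$. This means the equation is \emph{not} reflection-invariant on the nose, so one cannot simply invoke uniqueness for the reflected function. Instead, I would introduce the function $v(\ti) := u_\al(\pi/2 - \ti)$ defined on $(\pi/4, \pi/2)$ (or the appropriate subinterval on which $u_\al$ is defined past $\pi/4$) and compute the ODE it satisfies. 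A direct substitution gives
\[
v''(\ti) - 2\frac{\cos(2\ti)}{\sin(2\ti)} v'(\ti) = \lambda\left(v(\ti)^5 - v(\ti)\right),
\]
which differs from \eqref{esfera2} only in the sign of the drift term — hence $v$ solves a different Cauchy problem and a naive uniqueness argument fails. The key observation that rescues the symmetry is that at $\ti = \pi/4$ the drift coefficient \emph{vanishes} ($\cos(\pi/2) = 0$), so the two equations agree precisely at the reflection point.

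The second step is therefore to set up the comparison carefully at $\ti = \pi/4$. Let $w(\ti) := u_\al(\ti) - v(\ti) = u_\al(\ti) - u_\al(\pi/2 - \ti)$ on an interval to the right of $\pi/4$. By hypothesis $u_\al'(\pi/4) = 0$, and clearly $w(\pi/4) = 0$; moreover $w'(\pi/4) = u_\al'(\pi/4) + u_\al'(\pi/4) = 2u_\al'(\pi/4) = 0$ (using that the derivative of $v$ at $\pi/4$ is $-u_\al'(\pi/4)$). So $w$ and $w'$ both vanish at $\pi/4$. Subtracting the two ODEs and writing the difference of the nonlinear right-hand sides as $\lambda(u_\al^5 - v^5 - u_\al + v) = \lambda\, c(\ti)\, w(\ti)$ for a bounded continuous coefficient $c(\ti)$ (by the mean value theorem, since both $u_\al$ and $v$ stay in a bounded range), one obtains a linear second-order ODE for $w$ of the form $w'' + a(\ti) w' + b(\ti) w = (\text{terms involving } u_\al' \text{ and the drift mismatch})$. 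The cleanest route is to note directly that both $u_\al$ and $v$ solve the \emph{same} equation on the tiny window where one treats the drift coefficient as a known function: more precisely, on $(\pi/4, \pi/2)$ define the linear operator with coefficients frozen from $u_\al$, show both $u_\al$ and $v$ satisfy a linear problem with matching Cauchy data at $\pi/4$, and invoke the uniqueness theorem for linear ODEs (the coefficients are continuous on $(0,\pi/2)$, hence on any compact subinterval of $(\pi/4, \pi/2)$, and the singularity of the drift at $\pi/2$ is handled by taking limits / working on $[\pi/4, \pi/2 - \delta]$ and letting $\delta \to 0$). This forces $w \equiv 0$, i.e. $u_\al(\ti) = u_\al(\pi/2 - \ti)$ on $[\pi/4, \pi/2)$.

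From $u_\al(\ti) = u_\al(\pi/2 - \ti)$ the remaining conclusion $u_\al'(\pi/2) = 0$ follows by differentiating the identity and evaluating at $\ti \to \pi/2$: $u_\al'(\pi/2) = -u_\al'(0) = 0$ by the initial condition in \eqref{esfera2}. (One should check $u_\al$ is differentiable up to $\pi/2$, which is where the vanishing of $\sin(2\ti)$ at $\pi/2$ requires care — but since the reflected solution $v$ is manifestly smooth near $\ti = 0$ coming from the regular Cauchy problem, its image under reflection is smooth near $\pi/2$, and the identity $u_\al = v$ transfers that regularity.) The main obstacle, and the point deserving the most care in the write-up, is precisely this: the ODE is not reflection-symmetric because of the sign flip in the drift term, so the symmetry is genuinely a consequence of the vanishing Neumann-type condition $u_\al'(\pi/4) = 0$ together with the vanishing of the drift coefficient at $\pi/4$, and the uniqueness argument must be phrased so that it only uses data at $\pi/4$ and the common linear structure — not a spurious claim that $v$ solves \eqref{esfera2}. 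A secondary technical point is the behavior at the endpoint $\pi/2$, where $\sin(2\ti) \to 0$; this is handled by the same device that gives regularity of solutions at $\ti = 0$, applied to the reflected function.
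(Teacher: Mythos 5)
Your central claim --- that the equation in \eqref{esfera2} is not invariant under the reflection $\ti \mapsto \frac{\pi}{2}-\ti$ --- rests on a sign error, and the workaround you build on it does not constitute a proof. Setting $v(\ti)=u_{\al}(\frac{\pi}{2}-\ti)$ and $\sigma=\frac{\pi}{2}-\ti$, the chain rule gives $v'(\ti)=-u_{\al}'(\sigma)$ and $v''(\ti)=u_{\al}''(\sigma)$, so the drift term transforms as
\[
2\,\frac{\cos(2\sigma)}{\sin(2\sigma)}\,u_{\al}'(\sigma)
=2\,\frac{-\cos(2\ti)}{\sin(2\ti)}\,\bigl(-v'(\ti)\bigr)
=2\,\frac{\cos(2\ti)}{\sin(2\ti)}\,v'(\ti).
\]
The sign flip of the coefficient is cancelled by the simultaneous sign flip of the first derivative: $v$ satisfies exactly the same equation as $u_{\al}$, not the reversed-drift equation you wrote down. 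Consequently the ``naive'' argument you dismiss is the correct one (and is the paper's proof): $u_{\al}$ and $v$ solve the same second-order ODE with coefficients continuous near $\ti=\frac{\pi}{4}$, and they share the Cauchy data $v(\frac{\pi}{4})=u_{\al}(\frac{\pi}{4})$ and $v'(\frac{\pi}{4})=-u_{\al}'(\frac{\pi}{4})=0=u_{\al}'(\frac{\pi}{4})$, hence coincide on $[\frac{\pi}{4},\frac{\pi}{2})$ by uniqueness; then $u_{\al}'(\ti)=-u_{\al}'(\frac{\pi}{2}-\ti)\to -u_{\al}'(0)=0$ as $\ti\to\frac{\pi}{2}$.

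The substitute argument you propose would not close the gap you (incorrectly) identified. If $v$ really satisfied the equation with drift $-2\frac{\cos(2\ti)}{\sin(2\ti)}v'$, then $w=u_{\al}-v$ would obey an \emph{inhomogeneous} linear equation whose source term is a multiple of $\frac{\cos(2\ti)}{\sin(2\ti)}v'(\ti)$, which has no reason to vanish on $(\frac{\pi}{4},\frac{\pi}{2})$; vanishing Cauchy data at $\frac{\pi}{4}$ would then not force $w\equiv 0$, and ``freezing the drift coefficient'' does not convert two genuinely different equations into a single linear problem to which uniqueness applies. The observation that the drift vanishes at $\ti=\frac{\pi}{4}$ controls the mismatch only at a single point and cannot rescue the argument. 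So the proposal as written contains a genuine error in its key computation and an invalid repair; the correct route is the direct reflection-plus-uniqueness argument you explicitly rejected.
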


\begin{proof}
The function $v(\ti)=u_{\al}(\frac{\pi}{2}-\ti )$ for $\ti \in [\frac{\pi}{4},\frac{\pi}{2})$ is also a solution of the equation.
Moreover $v(\frac{\pi}{4}) = u_{\al}(\frac{\pi}{4})$ and $v'(\frac{\pi}{2}) = 0 = u_{\al} '(\frac{\pi}{4})$. 
Therefore $v = u_{\al}$ in $ [\frac{\pi}{4},\frac{\pi}{2})$ and the lemma follows.
\end{proof}
\hfill$\square$

\begin{lem}
\label{cercadecero}
If $\al$ is close to zero, then the solution $u_{\al}$ of problem (\ref{esfera2}) has
no local extremes on $(0, \frac{\pi}{4})$. 
\end{lem}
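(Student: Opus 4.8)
The plan is to prove that, for $\al>0$ small, the solution $u_\al$ of (\ref{esfera2}) is strictly increasing on $(0,\pi/4)$; this of course forbids any local extreme of $u_\al$ on that interval. The whole argument rests on writing the equation in divergence form and controlling the sign of the reaction term.

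First I would multiply the equation in (\ref{esfera2}) by $\sin(2\theta)$ to obtain
\[
\big(\sin(2\theta)\,u_\al'\big)' \;=\; \lambda\,\sin(2\theta)\big(u_\al^5-u_\al\big)
\;=\;\lambda\,\sin(2\theta)\,u_\al\big(u_\al^4-1\big).
\]
On any interval $(0,\theta_0)\subseteq(0,\pi/4)$ where $0<u_\al<1$ the right-hand side is strictly positive, since $\sin(2\theta)>0$, $u_\al>0$, $u_\al^4-1<0$ and $\lambda<0$. Hence $\sin(2\theta)\,u_\al'$ is strictly increasing on $(0,\theta_0)$; as $\sin(2s)\,u_\al'(s)\to 0$ when $s\to 0^+$ (because $u_\al'(0)=0$), we get $\sin(2\theta)\,u_\al'(\theta)>0$, and therefore $u_\al'(\theta)>0$, for all $\theta\in(0,\theta_0)$. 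In short: as long as $u_\al$ stays in $(0,1)$ it is strictly increasing.

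It remains to check that for $\al$ close to $0$ one does have $0<u_\al<1$ on the whole of $(0,\pi/4]$. Since $u_0\equiv 0$ solves (\ref{esfera2}) for $\al=0$, the continuous dependence of the solution of (\ref{esfera2}) on the initial value $\al$ — the same fact already used to prove Lemma \ref{lim} — gives some $\al_0>0$ such that $|u_\al|<1/2$ on $[0,\pi/4]$ whenever $0<\al<\al_0$. Fix such an $\al$ and let $\theta_0$ be the supremum of the $\theta\in(0,\pi/4]$ with $u_\al>0$ on $(0,\theta)$; since $u_\al(0)=\al>0$ we have $\theta_0>0$. On $(0,\theta_0)$ one has $0<u_\al<1/2<1$, so by the previous step $u_\al$ is strictly increasing there, whence $u_\al\ge\al>0$ on $(0,\theta_0)$ and, by continuity, $u_\al(\theta_0)\ge\al>0$; maximality of $\theta_0$ then forces $\theta_0=\pi/4$. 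Thus $0<u_\al<1$ on $(0,\pi/4]$, and by the previous step $u_\al$ is strictly increasing on $(0,\pi/4)$, so it has no local extreme there.

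The only point requiring some care is the continuous dependence on $\al$ near $\theta=0$, where the coefficient $2\cos(2\theta)/\sin(2\theta)$ is singular; this is, however, the expected behaviour for this regular singular equation with the condition $u'(0)=0$, and it is precisely the property already taken for granted earlier in this section.
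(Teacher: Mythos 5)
Your proposal is correct and follows essentially the same route as the paper's (very brief) proof: the paper asserts that for small $\al$ the solution stays below $1$ on $(0,\pi/4)$ and is therefore increasing there, and you simply make both claims rigorous — the bound via continuous dependence on $\al$, and the monotonicity via the divergence form $\bigl(\sin(2\theta)\,u_\al'\bigr)'=\lambda\sin(2\theta)\,u_\al\bigl(u_\al^4-1\bigr)>0$ while $0<u_\al<1$. Nothing is missing; this is a faithful, fully justified version of the paper's argument.
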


\begin{proof}
 For $\al$ close to $0$ the solution $u_\al$ increases slowly in interval $(0,\frac{\pi}{4})$ and stays less than $1$ in that interval.
Therefore it does not have any local extremes on $(0, \frac{\pi}{4})$.
\end{proof}
\hfill$\square$

\noindent Now define:
\begin{equation}
\label{F(u)}
 F(u):= \displaystyle\int_0^u \left(s^5-s\right)ds=  \frac{1}{6}u^6 - \frac{1}{2}u^2.
\end{equation}
Then $F(\alpha)<0$.
Note that $ F $ has only one positive zero $\sigma:= 3^{\frac{1}{4}}.$

\begin{lem}
\label{sigma}
If $\tau_j(\al)<\frac{\pi}{4}$, then
$ 0< u_{\al} (\tau_j(\al)) <\sigma. $
\end{lem}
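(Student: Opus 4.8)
The plan is to exploit an energy/Pohozaev-type identity adapted to the singular ODE in \eqref{toro}, evaluated between $0$ and the first critical point $\tau_j(\al)$ lying in $(0,\pi/4)$. Define the natural energy along a solution $u=u_{\al,\lambda}$ by
\[
E(\ti):=\tfrac12 u'(\ti)^2 + \lambda F(u(\ti)),
\]
where $F$ is as in \eqref{F(u)}. Since $\lambda<0$, we shall see that the sign of the weight $2\cos(2\ti)/\sin(2\ti)$ — which is strictly positive on $(0,\pi/4)$ — forces $E$ to be monotone on that interval, and this monotonicity pins down $F(u(\tau_j(\al)))$ relative to $F(\al)$.

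First I would compute $\dfrac{d}{d\ti}E(\ti)$. Using the ODE in \eqref{esfera2}, $u''=-\,2\tfrac{\cos(2\ti)}{\sin(2\ti)}\,u' + \lambda(u^5-u)$, one gets
\[
E'(\ti)=u'\Big(u'' + \lambda(u^5-u)\Big)\cdot(\text{sign bookkeeping})
= -\,2\,\frac{\cos(2\ti)}{\sin(2\ti)}\,u'(\ti)^2,
\]
after substituting and noting $\lambda F'(u)u' = \lambda(u^5-u)u'$ cancels the inhomogeneous term. On $(0,\pi/4)$ the factor $\cos(2\ti)/\sin(2\ti)$ is strictly positive, so $E'(\ti)\le 0$, i.e. $E$ is non-increasing on $(0,\pi/4)$, and strictly decreasing wherever $u'\neq 0$. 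Evaluating at $\ti=0$ gives $E(0)=\tfrac12 u'(0)^2+\lambda F(\al)=\lambda F(\al)$, and at $\ti=\tau_j(\al)$, since $u'(\tau_j(\al))=0$ by definition of a critical point, $E(\tau_j(\al))=\lambda F(u(\tau_j(\al)))$. Hence
\[
\lambda\, F\!\big(u(\tau_j(\al))\big)\;=\;E(\tau_j(\al))\;\le\;E(0)\;=\;\lambda\,F(\al),
\]
and because $\al$ is not a critical value reached for free (the solution genuinely moves before $\tau_j$), the inequality is strict: $\lambda F(u(\tau_j(\al))) < \lambda F(\al)$. Dividing by $\lambda<0$ reverses it: $F(u(\tau_j(\al))) > F(\al)$.

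From here I would finish with the elementary analysis of $F(u)=\tfrac16u^6-\tfrac12u^2$ on $[0,\infty)$. We have $F(0)=0$, $F<0$ on $(0,\sigma)$ with $\sigma=3^{1/4}$ its unique positive zero, $F$ strictly decreasing on $(0,1)$, strictly increasing on $(1,\infty)$, and $F\to+\infty$. Now $u(\tau_j(\al))>0$ since $u$ is a positive solution. If $j$ is odd then $u(\tau_j(\al))$ is a local max and $u(\tau_j(\al))>1$ by the Remark following Lemma~\ref{lim}; combined with $F(u(\tau_j(\al))) > F(\al)$ and $F(\al)<0$ we only know $F(u(\tau_j(\al)))$ could a priori be large, so I would instead argue via the more precise fact that $E$ decreases, hence $|\lambda F(u(\tau_j))|<|\lambda F(\al)| \le |\lambda|\cdot\max_{[0,1]}|F| = |\lambda|\cdot\tfrac13$ (attained at $u=1$); therefore $F(u(\tau_j(\al))) > -\tfrac13 \ge \min F$, which places $u(\tau_j(\al))$ strictly between the two solutions of $F(s)=F(u(\tau_j))$, and in particular below $\sigma$, since $F(\sigma)=0 > F(u(\tau_j(\al)))$ would be the obstruction only if $F(u(\tau_j(\al)))\ge 0$. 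The cleanest route: from $F(u(\tau_j(\al)))>F(\al)$ and $F(\al)<0$ we cannot yet exclude $u(\tau_j(\al))\ge\sigma$; so the genuinely needed extra input is that $E(\tau_j(\al))<0$, equivalently $F(u(\tau_j(\al)))<0$, which I expect to be the main obstacle and which should follow because $E$ starts negative and only decreases — so $E$ stays negative throughout $(0,\pi/4]$ — giving $F(u(\tau_j(\al)))<0$ at once. Then $0<u(\tau_j(\al))<\sigma$ is immediate from the sign description of $F$, completing the proof.
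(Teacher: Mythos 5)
Your strategy is the paper's own: introduce the energy along the trajectory, show it is monotone on $(0,\pi/4)$ because the damping coefficient $2\cos(2\ti)/\sin(2\ti)$ is positive there, and read off the sign of $F$ at the critical point. Indeed your closing argument --- $E$ is negative at $\ti=0$ and only decreases, hence $E(\tau_j)<0$, hence $F(u(\tau_j))<0$, hence $0<u(\tau_j)<\sigma$ --- is exactly the proof in the text, which uses $E_{\al}(\ti)=\frac12 u'(\ti)^2-\lambda F(u(\ti))$ as in (\ref{energy}) and concludes $F(u_{\al}(\tau_j))<F(\al)<0$.

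As written, however, your proof is internally inconsistent because of the sign in your definition $E=\frac12 u'^2+\lambda F(u)$. With that sign the cancellation you invoke does not occur: substituting $u''=-2\frac{\cos(2\ti)}{\sin(2\ti)}u'+\lambda(u^5-u)$ gives $E'=-2\frac{\cos(2\ti)}{\sin(2\ti)}u'^2+2\lambda(u^5-u)u'$, not $-2\frac{\cos(2\ti)}{\sin(2\ti)}u'^2$; the clean identity holds for $E=\frac12 u'^2-\lambda F(u)$. This same slip is what sends you on the detour in the middle of the proof: with the correctly signed energy, $E(\tau_j)\le E(0)=-\lambda F(\al)$ yields directly $-\lambda F(u(\tau_j))\le-\lambda F(\al)$, i.e.\ $F(u(\tau_j))\le F(\al)<0$ since $-\lambda>0$, and there is nothing left to do; the reversed inequality $F(u(\tau_j))\ge F(\al)$ you obtain arises only because your boundary values $E(0)$, $E(\tau_j)$ were computed with the opposite sign convention from your $E'$. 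Likewise, the claims ``$E$ starts negative'' and ``$E(\tau_j)<0$ is equivalent to $F(u(\tau_j))<0$'' are true for $\frac12 u'^2-\lambda F(u)$ but false for your $E$, for which $E(0)=\lambda F(\al)>0$ and $E(\tau_j)<0$ would force $F(u(\tau_j))>0$, the opposite of what you want. Fix the sign once, and the entire middle section (the bound via $\max_{[0,1]}|F|=\tfrac13$, the discussion of the two roots of $F(s)=F(u(\tau_j))$) can be deleted; the underlying idea is correct and coincides with the paper's.
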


To prove this lemma we consider the energy function defined by
\begin{equation}
\label{energy}
E_{\al}(\ti):=\frac{\left(u_{\al} '(\ti)\right)^2}{2} -\lambda
\left(\frac{\left(u_{\al}(\ti)\right)^6}{6}- \frac{\left(u_{\al}(\ti)\right)^2}{2}\right).
 \end{equation}
If $u_{\al}$ is a solution of problem (\ref{esfera2}) then we have 
\[
E_{\al} '(\ti)=-2 \frac{\cos(2\ti)}{\sin(2\ti)}u_{\al} '(\ti)^2. 
\]
Consequently $E_{\al} $ is decreasing on $[0,\frac{\pi}{4}]$ and
$
E_{\al}(0)= -\lambda F(\al).
$
\vspace{0.5cm}
\begin{demsigma}
Since $E_{\al} $ is decreasing on $[0,\frac{\pi}{4}]$ and $0<\tau_j(\al)\leq \frac{\pi}{4}$, it follows that
\[
E_{\al}(\tau_j) < E_{\al}(0)= -\lambda F(\al). 
\]
Consequently, since $E_\al(\tau_j(\al)) = -\lambda F(u_{\al}(\tau_j(\al)))$ and $0<\al<1$ we have that
\[F(u_{\al}(\tau_j(\al)))<  F(\al)<0. \]
This means that $ 0< u_{\al} (\tau_j(\al)) <\sigma, $ as asserted.
\end{demsigma}
\hfill$\square$
\vspace{0.5cm}

Next we define $\al_k^*$ as the infimum value of $\al$ for which $\tau_k(\al)$ exists on $(\al, 1)$:
\[
\al_k^*=\inf \{ \al_0\in(0,1): \hbox{ for } \al \in (\al_0,1) \hbox{ $u_\al$ has at least $k$ critical points on } (0,\pi/2)\}.
\]

\begin{lem}
\label{alfaestr}
Suppose that $\tau_k(\al)$ exists for some $\al<1$ sufficiently close to $1$ so that $\al_k^*$ is well
defined. Then there exists $\delta>0$ such that
\[
\tau_k (\al) \geq \frac{\pi}{4}  \hspace{0.5cm} \hbox{if } \al \in (\al_k^*, \al_k^*+ \delta)
\]
\end{lem}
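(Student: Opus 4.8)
The plan is to argue by contradiction, combining the continuous dependence of (\ref{esfera2}) on the initial value $\al$ with the observation that every critical point of $u_\al$ in $(0,\pi/2)$ is nondegenerate: if $u_\al'(\ti_0)=0$ and $u_\al(\ti_0)=1$ then uniqueness for (\ref{esfera2}) forces $u_\al\equiv 1$, i.e. $\al=1$. Suppose the statement fails; then there is a sequence $\al_m\downarrow\al_k^*$ with $\tau_k(\al_m)<\pi/4$, so $0<\tau_1(\al_m)<\dots<\tau_k(\al_m)<\pi/4$ and each $u_{\al_m}$ has at least $k$ critical points in $(0,\pi/4)$. Assume first $\al_k^*>0$ and fix a compact set $K\subset(0,1)$ containing all the $\al_m$. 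Because $E_{\al_m}$ is decreasing on $[0,\pi/4]$ with $E_{\al_m}(0)=-\lambda F(\al_m)$, one gets $0<u_{\al_m}<\sigma$ and $|u_{\al_m}'|\le M$ on $[0,\pi/4]$ for some $M=M(K)$; in particular the $u_{\al_m}$ are defined on all of $[0,\pi/4]$, and by continuous dependence of the solution of (\ref{esfera2}) and of its derivative on $\al$, $u_{\al_m}\to u_{\al_k^*}$ and $u_{\al_m}'\to u_{\al_k^*}'$ uniformly on $[0,\pi/4]$.

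Next I would pass to the limit on the critical points. After extracting a subsequence, $\tau_j(\al_m)\to\ell_j\in[0,\pi/4]$ with $\ell_1\le\dots\le\ell_k$, and uniform convergence of the derivatives gives $u_{\al_k^*}'(\ell_j)=0$ for every $j$. I claim $0<\ell_1<\dots<\ell_k$ and that each $\ell_j$ is a nondegenerate critical point of $u_{\al_k^*}$. That $\ell_1>0$: on $[0,\tau_1(\al_m)]$ the function $u_{\al_m}$ is monotone with endpoint values $\al_m$ and $u_{\al_m}(\tau_1(\al_m))>1$ (by the Remark), so $1-\al_m\le M\,\tau_1(\al_m)$, whence $\tau_1(\al_m)\ge(1-\sup K)/M=:\epsilon_1>0$ and $\ell_1\ge\epsilon_1$. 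That $\ell_{j-1}<\ell_j$ for $2\le j\le k$: on $[\tau_{j-1}(\al_m),\tau_j(\al_m)]$ the function $u_{\al_m}$ is strictly monotone, while by the Remark its values at the two endpoints lie on opposite sides of $1$; hence $u_{\al_m}(\xi_m)=1$ for some $\xi_m\in(\tau_{j-1}(\al_m),\tau_j(\al_m))$, and if $\ell_{j-1}=\ell_j$ then $\xi_m$ converges to that common value $\ell$, forcing $u_{\al_k^*}(\ell)=1$ and $u_{\al_k^*}'(\ell)=0$, hence $u_{\al_k^*}\equiv 1$ and $\al_k^*=1$ — impossible. Finally, since $u_{\al_k^*}>0$ on $[0,\pi/4]$, and if $u_{\al_k^*}(\ell_j)$ were $1$ we would again get $u_{\al_k^*}\equiv 1$, we have $u_{\al_k^*}(\ell_j)\in(0,\sigma)\setminus\{1\}$; evaluating (\ref{esfera2}) at $\ell_j$ then gives $u_{\al_k^*}''(\ell_j)=\lambda\bigl(u_{\al_k^*}(\ell_j)^5-u_{\al_k^*}(\ell_j)\bigr)\ne 0$, so $\ell_j$ is nondegenerate.

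The implicit function theorem applied to $(\al,\ti)\mapsto u_\al'(\ti)$ at each $(\al_k^*,\ell_j)$ — the transversality $u_{\al_k^*}''(\ell_j)\ne 0$ being the nondegeneracy just established — now yields $\epsilon_2>0$ and $C^1$ functions $\al\mapsto\widetilde\tau_j(\al)$ on $(\al_k^*-\epsilon_2,\al_k^*+\epsilon_2)$ with $\widetilde\tau_j(\al_k^*)=\ell_j$ and $u_\al'(\widetilde\tau_j(\al))=0$; after shrinking $\epsilon_2$ the $\widetilde\tau_j(\al)$ remain ordered and contained in $(0,\pi/2)$, so $u_\al$ has at least $k$ critical points in $(0,\pi/2)$ for every $\al\in(\al_k^*-\epsilon_2,\al_k^*+\epsilon_2)$. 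Since the same is true for every $\al\in(\al_k^*,1)$ by the definition of $\al_k^*$, it holds for all $\al\in(\al_k^*-\epsilon_2,1)$, contradicting $\al_k^*=\inf$. This completes the case $\al_k^*>0$; in the case $\al_k^*=0$ there is nothing to arrange, for the sequence $\al_m\downarrow 0$ with $u_{\al_m}$ having $k\ge 1$ critical points in $(0,\pi/4)$ contradicts Lemma \ref{cercadecero} at once. The step I expect to be the main obstacle is ruling out a merger $\ell_{j-1}=\ell_j$ in the limit — a priori the energy dissipated between two consecutive turning points could go to $0$ — and this is precisely what the observation that consecutive turning points straddle the value $1$ takes care of; securing the uniform lower bound $\tau_1(\al)\ge\epsilon_1$ is the other point requiring some care.
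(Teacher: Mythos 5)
Your proof is correct and follows essentially the same route as the paper's: argue by contradiction along a sequence $\al_m\downarrow\al_k^*$, use the decreasing energy to bound $u_{\al_m}$ and $u_{\al_m}'$ on $[0,\pi/4]$, pass to the limit in the critical points, rule out the degenerate limits $u=0$ and $u=1$ by uniqueness, and apply the implicit function theorem to $G(\al,\ti)=u_\al'(\ti)$ to extend the critical points below $\al_k^*$, contradicting the infimum. The only difference is that you track all $k$ critical points and verify they stay separated in the limit, which makes explicit a point the paper's shorter argument (which applies the IFT only to $\tau_k$) leaves implicit.
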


\begin{proof}
If $\al_k^*=0$ then the assertion follows from Lemma 2.6.
Thus we may assume that $\al_k^* \in (0,1)$.
Suppose there exists a decreasing sequence $\{ \al_j \}$ such that
\[
\tau_k(\al_j) < \frac{\pi}{4}  \hspace{0.5cm} \hbox{and}  \hspace{0.5cm} \al_j \to \al_k^*.
\]
Since the sequences $\{ \tau_k(\al_j) \}$ and $\{ u_{\al_j}(\tau_k(\al_j)) \}$ are bounded by Lemma \ref{sigma}, it follows that there exists
$\tau_k^*\in [0,\frac{\pi}{4}]$ and an $u^* \in [0,\sigma]$ such that, taking a subsequence, we may soppose:
\[
\tau_k(\al_j) \to \tau_k^*   \hspace{0.5cm} \hbox{and}    \hspace{0.5cm} u_{\al_j}(\tau_k(\al_j)) \to u^*.
\]
If  $u^*$ is $1$ or $0$, then by uniqueness $u_{\al_k^*}$ is constant, which contradicts the fact that $\al_k^* \in (0,1)$.
If $u^*\in(0,1)$, then we use the Implicit Function Theorem with the function $G(\al, \ti)= u_{\al}'(\ti)$. Since $\al_k^* \neq 0,1$, it follows that $\frac{d}{d\ti}G(\al_k ^*, \ti) \neq 0$. 
But since $G(\al_k ^*, \ti)=0$, we have that $\tau_k(\al) $ is well defined for all $\al$ in a neighbourhood of $\al_k^*$, which contradicts the definition of $\al_k^*$.
\end{proof}
\hfill$\square$

We end this section with the proof of Theorem \ref{teoesf}. 
\begin{demTeo}
Suppose $n\geq 1$ and $\lambda \in [-(2n+2)(2n+3),-(2n)(2n+1) )$. 
Given  $k\in \{1,2, \dots, n  \}$ we will show that 
\[
\tau_k(\al_0)=\frac{\pi}{4}
\]
for some $\al_0$ and hence the solution $u_{\al_0}$ has $k$ local extremes on  
$(0, \frac{\pi}{4}]$.
Since $u'_{\al_0}(\pi /4 )=0$, by Lemma \ref{simetric} it follows that $u_{\al_0}$ 
satisfies $\rm (i)-\rm (iii)$ of Theorem \ref{teoesf}.
\par
By Lemmas 2.3 and  \ref{lim} , since $\lambda \in [\lambda_{2n+2}, \lambda_{2n})$ and $\al$ is close to $1$, the solution $u_{\al}$ 
has $n$ local extremes  $(0, \pi/4)$. Therefore $\tau_k(\al)<\frac{\pi}{4}$. 
On the other hand, by Lemma \ref{alfaestr} we know that if $\al$ is close to $\al_k^*$ then $\tau_k(\al)\geq \frac{\pi}{4}$. 
By continuous dependence it follows that there is $\al_0$ such that $\tau_k(\al_0)=\frac{\pi}{4}$. 

\end{demTeo}
\hfill$\square$


\section{Auxiliary results}
In this section we will establish three auxiliary results that we will need to prove our main Theorem in next section.

\begin{lem}
\label{lemaphi}
Let $\delta , \kappa>0$ and $K$ be constants. Then there are constants $\epsilon_1>0$ and $\beta>0$ such that the solution $\varphi_\epsilon$ of
\begin{equation}
\label{phi}
\left\{
\begin{array}{rcl}
\epsilon^2 \varphi'' - 2\epsilon^2 K \varphi' - \kappa \varphi &=& 0 \\
\varphi(\pm \delta )&=&1/2\\
\end{array}
\right.
\end{equation}
satisfies: $ \varphi_\epsilon(0)<{e^{-\beta/\epsilon}}$ for all $\epsilon \in (0 , \epsilon_1)$.
\end{lem}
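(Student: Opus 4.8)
The plan is to solve the linear constant-coefficient ODE in \eqref{phi} explicitly and then estimate the solution at $0$. Writing the equation as $\varphi'' - 2K\varphi' - (\kappa/\epsilon^2)\varphi = 0$, the characteristic equation is $r^2 - 2Kr - \kappa/\epsilon^2 = 0$, with roots
\[
r_{\pm} = K \pm \sqrt{K^2 + \kappa/\epsilon^2}.
\]
For $\epsilon$ small, $r_+ \approx \sqrt{\kappa}/\epsilon$ is large and positive while $r_- \approx -\sqrt{\kappa}/\epsilon$ is large and negative; more precisely $r_\pm = \pm \sqrt{\kappa}/\epsilon + O(1)$ as $\epsilon \to 0$. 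So the general solution is $\varphi_\epsilon(\theta) = A e^{r_+ \theta} + B e^{r_- \theta}$, and I would impose the two boundary conditions $\varphi_\epsilon(\pm\delta) = 1/2$ to solve for $A$ and $B$.

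Solving the $2\times 2$ linear system: from $A e^{r_+\delta} + B e^{r_-\delta} = 1/2$ and $A e^{-r_+\delta} + B e^{-r_-\delta} = 1/2$, Cramer's rule gives
\[
A = \frac{\tfrac12\left(e^{-r_-\delta} - e^{r_-\delta}\right)}{e^{(r_+-r_-)\delta} - e^{-(r_+-r_-)\delta}}, \qquad
B = \frac{\tfrac12\left(e^{r_+\delta} - e^{-r_+\delta}\right)}{e^{(r_+-r_-)\delta} - e^{-(r_+-r_-)\delta}},
\]
and hence
\[
\varphi_\epsilon(0) = A + B = \frac{\tfrac12\left(e^{-r_-\delta} - e^{r_-\delta} + e^{r_+\delta} - e^{-r_+\delta}\right)}{e^{(r_+-r_-)\delta} - e^{-(r_+-r_-)\delta}}.
\]
Using $r_+ - r_- = 2\sqrt{K^2 + \kappa/\epsilon^2} =: 2\mu$ and $r_+ + r_- = 2K$, the numerator simplifies: $e^{r_+\delta} - e^{-r_+\delta} + e^{-r_-\delta} - e^{r_-\delta} = 2\sinh(r_+\delta) - 2\sinh(r_-\delta) = 2\big(\sinh((K+\mu)\delta) - \sinh((K-\mu)\delta)\big) = 4\cosh(K\delta)\sinh(\mu\delta)$, while the denominator is $2\sinh(2\mu\delta) = 4\sinh(\mu\delta)\cosh(\mu\delta)$. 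Therefore
\[
\varphi_\epsilon(0) = \frac{\cosh(K\delta)}{2\cosh(\mu\delta)},
\]
a clean closed form. (In the degenerate case $K=0$ one gets $\varphi_\epsilon(0) = 1/(2\cosh(\mu\delta))$ directly, and if $K$ has the wrong sign the argument is symmetric in $K \mapsto -K$ since $\cosh$ is even.)

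Now $\mu = \sqrt{K^2 + \kappa/\epsilon^2} \geq \sqrt{\kappa}/\epsilon$, so $\cosh(\mu\delta) \geq \tfrac12 e^{\mu\delta} \geq \tfrac12 e^{\sqrt{\kappa}\,\delta/\epsilon}$, giving
\[
\varphi_\epsilon(0) = \frac{\cosh(K\delta)}{2\cosh(\mu\delta)} \leq \cosh(K\delta)\, e^{-\sqrt{\kappa}\,\delta/\epsilon}.
\]
Since $\cosh(K\delta)$ is a fixed constant, choosing any $\beta \in (0, \sqrt{\kappa}\,\delta)$ and then $\epsilon_1>0$ small enough that $\cosh(K\delta)\, e^{-(\sqrt{\kappa}\,\delta - \beta)/\epsilon} < 1$ for all $\epsilon \in (0,\epsilon_1)$ yields $\varphi_\epsilon(0) < e^{-\beta/\epsilon}$, as required. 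I do not anticipate a serious obstacle here; the only mild care needed is handling the cases according to the sign of $K$ (or $K=0$) uniformly, which the evenness of $\cosh$ takes care of, and making sure the closed-form simplification of the $\sinh$ expressions is carried out correctly. The whole argument is elementary once one commits to writing down the explicit solution.
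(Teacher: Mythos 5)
Your proof is correct and follows essentially the same route as the paper: both solve the constant-coefficient ODE explicitly, impose the boundary conditions to get the same coefficients $A$ and $B$, and show that $\varphi_\epsilon(0)=A+B$ decays like $e^{-\sqrt{\kappa}\,\delta/\epsilon}$ up to a bounded factor. Your closed form $\varphi_\epsilon(0)=\cosh(K\delta)/(2\cosh(\mu\delta))$ is a slightly cleaner way of identifying the constant that the paper leaves implicit, but the argument is the same.
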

\begin{proof}
Note that 
$\varphi_\epsilon(\ti)=A e^{c_1\ti}+ B e^{c_2\ti},$
with $A,B$ given by

\[ 
A= \frac{1-e^{2c_2\delta}}{2(e^{c_1\delta}-e^{(2c_2-c_1)\delta})}, \hspace{0.5cm} \hbox{ and }\hspace{0.5cm} B= \frac{1-e^{2c_1\delta}}{2(e^{c_2\delta}-e^{(2c_1-c_2)\delta})}; 
\]
where $c_1, c_2$ are the roots of the equation $\epsilon^2 x^2 - 2\epsilon^2K x -\kappa =0$.
Then
\[ c_1,c_2(\epsilon)= K \pm \sqrt{\mu_\epsilon} \]
where $\mu_\epsilon= K^2 + \kappa/\epsilon^2$.
Now it is easy to see that $c_1(\epsilon)\to +\infty$, $c_2(\epsilon)\to -\infty$ as $\epsilon\to 0$ and consequently
\[
e^{\sqrt{\mu_\epsilon}\delta}(A+B)\to C, \hspace{0.3cm} \hbox{as }\epsilon\to 0
\]
where $C$ is some positive constant.
It follows that there are constants $\beta>0$ and $\epsilon_1>0$ such that
\[
\varphi_\epsilon(0)= A+B < e^{-\beta/\epsilon}, \hspace{0.3cm} \hbox{if }\epsilon<\epsilon_1.
\]
\end{proof}
\hfill$\square$
\vspace{0.3cm}

Now we shall study the behavior of the solutions of the equation 
 \begin{equation}
 \label{zzeta}
Z''(s) +Z(s)^5-Z(s)=0, \hspace{0.5cm}  Z'(0)=0\\
\end{equation}
when $s\to -\infty.$ To this end consider the following lemma.
\begin{lem}
Let $Z$ a  solution to the Eq. (\ref{zzeta}) such that 
\begin{eqnarray}
\label{zcondition1}
Z'(0)&=&0\\
Z(0)&=&\al
\end{eqnarray}
with $\al > 0$. Then
\begin{itemize}
 \item If $\al<3^{1/4}$ and $\al\neq 1$ then $Z$ oscillates around $1$.
 \item If $\al>3^{1/4}$ then $Z$ vanishes at some $s<0$, and it is positive and increasing in $(s,0)$.
 \item If $\al=3^{1/4}$ then $Z$ is  increasing in $(-\infty ,0)$ and
$
   \lim_{s\to -\infty} Z(s)=0.
$
\end{itemize}
\end{lem}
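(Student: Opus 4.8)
The plan is to analyze the autonomous ODE \eqref{zzeta} via its conserved energy, exactly as the energy function \eqref{energy} was used for the non-autonomous problem but now without the dissipation term. Define
\[
\mathcal{E}(s):=\frac{Z'(s)^2}{2}+\frac{Z(s)^6}{6}-\frac{Z(s)^2}{2},
\]
so that along any solution of \eqref{zzeta} one has $\mathcal{E}'(s)=Z'(s)\bigl(Z''(s)+Z(s)^5-Z(s)\bigr)=0$; hence $\mathcal{E}$ is constant, equal to $F(\al)=\tfrac16\al^6-\tfrac12\al^2$ by the initial conditions \eqref{zcondition1}. The potential $V(Z)=\tfrac16 Z^6-\tfrac12 Z^2=F(Z)$ has a local max $V(0)=0$, a local min $V(1)=-\tfrac13$, and its only positive zero at $Z=\sigma=3^{1/4}$, with $V$ strictly increasing for $Z>1$. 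This reduces the statement to a standard phase-plane discussion in the $(Z,Z')$ plane: the trajectory is the level set $\{\,Z'^2/2=F(\al)-F(Z)\,\}$.

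First I would treat the case $\al\in(0,\sigma)$, $\al\ne 1$. Then $F(\al)<0$, and on the line $Z'=0$ the equation $F(Z)=F(\al)$ has two positive roots $Z_-<1<Z_+$ bracketing the minimum at $Z=1$ (this uses that $F<0$ precisely on $(0,\sigma)$ and is convex-like near its minimum; concretely $F(Z)=F(\al)$ with $F(\al)<0$ forces $Z\in(0,\sigma)$, and there are exactly two such roots because $F$ is strictly decreasing on $(0,1)$ and strictly increasing on $(1,\sigma)$). Since $\al$ is itself one of these roots and $F(\al)-F(Z)\ge 0$ only for $Z\in[Z_-,Z_+]$, the trajectory is a closed curve encircling the center $(1,0)$; I would note $(1,0)$ is a nondegenerate center since $V''(1)=4>0$. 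Hence $Z$ is periodic and oscillates around $1$, which is the first bullet. For the second bullet, $\al>\sigma$: now $F(\al)>0$, so $F(\al)-F(Z)>0$ for all $Z\in[0,\al)$ and in particular at $Z=0$, giving $Z'(0^-)\ne 0$ there; running $s$ backward from $0$, $Z$ is decreasing-in-$s$ away from its max at $s=0$ (note $Z''(0)=Z(0)-Z(0)^5=\al-\al^5<0$, so $s=0$ is a strict local max), stays positive while $Z\in(0,\al)$, and must reach $Z=0$ at a finite $s_0<0$ because $Z'^2=2(F(\al)-F(Z))$ is bounded below by a positive constant on, say, $[0,\al/2]$, preventing the orbit from taking infinite time; on $(s_0,0)$, $Z>0$ and $Z'(s)>0$. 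For the third bullet, $\al=\sigma$: then $F(\al)=0$, so $Z'^2=-2F(Z)=-2V(Z)\ge 0$ with equality only at $Z=0$ and $Z=\sigma$; the orbit is the branch of the stable/unstable manifold of the saddle-type equilibrium at the origin (here $V''(0)=-1<0$, so $(0,0)$ is a hyperbolic saddle for the Hamiltonian flow), so $Z$ is monotone increasing on $(-\infty,0)$ from $0$ to $\sigma$, with $Z(s)\to 0$ as $s\to-\infty$; monotonicity follows since $Z'>0$ throughout $(-\infty,0)$ (it cannot vanish in the open interval as $Z\in(0,\sigma)$ there forces $Z'^2>0$), and the limit is $0$ because the only equilibrium the orbit can approach as $s\to-\infty$ on this level set is the origin.

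The main obstacle is not any single estimate but making the finite-arrival-time and infinite-escape-time arguments clean: in the second bullet I need that the backward orbit hits $Z=0$ in finite $s$ (a one-sided improper integral $\int_{0}^{\al} dZ/\sqrt{2(F(\al)-F(Z))}$ that is finite because the integrand is bounded near $Z=0$ and has only an integrable $1/\sqrt{\al-Z}$-type singularity near $Z=\al$), while in the third bullet I need the opposite — that approach to the origin takes infinite $s$ — which is the standard logarithmic blow-up of $\int dZ/\sqrt{-2V(Z)}\sim \int dZ/Z$ near the saddle. I would phrase both via the separated-variables formula $s=\pm\int dZ/\sqrt{2(F(\al)-F(Z))}$ and a short comparison of the integrand near the relevant endpoint, and invoke uniqueness of solutions of \eqref{zzeta} (the right-hand side is smooth) to rule out the trajectory doing anything other than following the level curve of $\mathcal{E}$.
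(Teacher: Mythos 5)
Your proposal is correct and rests on the same idea as the paper's own proof: multiplying by $Z'$ to obtain the first integral $\tfrac12 Z'^2+F(Z)=F(\al)$ and then splitting into cases according to the sign of $F(\al)$. Your phase-plane packaging, with the explicit separated-variables integrals distinguishing finite-time arrival at $Z=0$ (for $\al>3^{1/4}$) from the logarithmically slow approach to the saddle at the origin (for $\al=3^{1/4}$), is a somewhat more careful rendering of the same argument, which the paper instead handles via sequences $s_j\to-\infty$ with $Z'(s_j)\to 0$ and a convexity contradiction in the oscillatory case.
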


\begin{proof}
If we multiplicate the equation $(\ref{zzeta})$ by $Z'$ and integrate, then we have
\begin{equation}
\label{ctec}
 c=\frac{Z'(s)^2}{2}+\frac{Z(s)^6}{6}-\frac{Z(s)^2}{2}.
\end{equation}
It immediately follows that $Z$ is globally defined and 
$
c=\al^6/6-\al^2/2.
$
\par
\noindent
Note that if $s_1$ is a critical point of $Z$ then
\begin{equation}
\label{Z(s_1)}
  c=\frac{Z(s_1)^6}{6}-\frac{Z(s_1)^2}{2}.
\end{equation}
Now if $c\geq 0$, ie $\al \geq 3^{1/4}$, there is only one positive value of $Z(s_1)$ which satisfies the previous equation.
There are two options: either $Z$ vanishes  at some $s_0<0$ or $L=\lim_{s\to -\infty}Z(s)$ exists and it is non-negative.
Suppose first 
that $Z$ vanishes at some $s_0<0$ and that $Z'(s_0)\neq 0$ because the uniqueness of solutions.
Evaluating in $(\ref{ctec})$ we get 
$
Z'(s_0)^2/2=c
$
and $c>0$. Otherwise if there is a $L\ge0$ such that 
\begin{equation}
\label{limit}
 L=\lim_{s\to -\infty}Z(s).
\end{equation}
Then there is a sequence $s_j\to -\infty$ as $j\to \infty$ such that $Z'(s_j)\to 0$. 
If we take the limit when $s_j\to -\infty$ to the equation $(\ref{Z(s_1)})$, then 
we obtain $L=\al$, which is a contradiction because $Z$ is increasing, or $L=0$, which implies $c=0$.

Now if if $c<0$, ie $\al < 3^{1/4}$, and $Z$ has a critical point in $s_1$ there is two possible values of $Z(s_1)$: a minimum less than $1$ and a maximum greater than $1$. 
If $Z$ is not oscillating, it remains over or below the value $1$. We will show that it is not possible. 
Suppose $Z$ remains below $1$. Then $Z$ is convex and positive, so there is a $0< L<1$ that satisfies $(\ref{limit})$.
Morover $\lim_{j\to \infty}Z'(s_j)=0$ and $\lim_{j\to \infty}Z''(s_j)=0$ for a sequence $s_j\to -\infty$. 
Taking limit when $s_j\to-\infty$ in $(\ref{zzeta})$ we have: $\lim_{s_j\to -\infty}Z''(s_j)=L-L^5$. There is a contradiction.
If $Z$ is over the value $1$, we get a contradiction in a similar way. Therefore $Z$ remains oscillating around $1$.


\end{proof}
\hfill$\square$
\begin{lem}
\label{lemaZ}
Let $z=z_\epsilon$ be a solution of the equation
\begin{eqnarray}
\label{zeta}
 z''(s) + 2\epsilon \frac{ \cos(2T_0 + 2s \epsilon)}{\sin(2T_0 + 2s \epsilon)} z'(s) +z(s)^5-z(s)=0 
\end{eqnarray}
which is positive and increasing on the interval $(\psi(\epsilon),0)$ with the initial conditions 
\begin{eqnarray}
 z'(0)&=&0\\
z(0)&=&u_0(\epsilon)
\end{eqnarray}
and let $Z_0$ be the unique solution of problem (\ref{zzeta})-(\ref{zcondition1}) such that 
$
 Z_0 (0)=3^{1/4}.
$
Assume that 
$\psi(\ti)$ is a function such that $\psi(\epsilon)\to -\infty$ as $\epsilon\to 0$. 
Then
\[
 z_\epsilon(s) \to Z_0(s) \hspace{0.2cm}\hbox{ and }\hspace{0.2cm} z_\epsilon '(s) \to Z_0'(s)\hspace{0.2cm} 
 \hbox{ when } \epsilon \to 0
\]
 uniformly over bounded intervals
 and, in particular,

 \begin{equation}
 \label{u_0 to sigma}
    u_0(\epsilon)\to 3^{1/4} \hspace{0.2cm}\hbox{ when }\epsilon \to 0.
 \end{equation}

\end{lem}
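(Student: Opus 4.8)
The plan is to view equation (\ref{zeta}) as a regular perturbation of (\ref{zzeta}) on any fixed bounded interval $[-R,0]$, and then extract the conclusion about $u_0(\epsilon)$ by a contradiction argument using the trichotomy established in the previous lemma. First I would fix $R>0$ and restrict attention to $s\in[-R,0]$. For $\epsilon$ small enough the argument $2T_0+2s\epsilon$ stays in a fixed compact subinterval of $(0,\pi)$ (recall $T_0\in(0,\pi/2)$ is fixed), so the coefficient $2\epsilon\cos(2T_0+2s\epsilon)/\sin(2T_0+2s\epsilon)$ is bounded by $C\epsilon$ in $C^1([-R,0])$ and tends to $0$ uniformly as $\epsilon\to 0$. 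The initial data $z_\epsilon(0)=u_0(\epsilon)$, $z_\epsilon'(0)=0$ lie in a bounded set (by hypothesis $z_\epsilon$ is positive on $(\psi(\epsilon),0)$, and one can bound $u_0(\epsilon)$ using the energy identity (\ref{ctec}) adapted to (\ref{zeta}) together with $\psi(\epsilon)\to-\infty$, so $u_0(\epsilon)$ cannot escape to $\infty$ nor to $0$ without contradicting positivity/monotonicity on an arbitrarily long interval). Passing to a subsequence, $u_0(\epsilon)\to u_0^*\in[0,\infty)$.

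Next I would apply continuous dependence of ODE solutions on parameters and initial data: the solution $\tilde Z$ of (\ref{zzeta})--(\ref{zcondition1}) with $\tilde Z(0)=u_0^*$ is the uniform $C^1$ limit on $[-R,0]$ of $z_\epsilon$ along that subsequence. (Technically one writes (\ref{zeta}) as a first-order system, notes the right-hand side is Lipschitz uniformly in $\epsilon$ on the relevant compact region, and invokes Gronwall to get the uniform convergence.) Now I use the trichotomy from the previous lemma to pin down $u_0^*$. Since each $z_\epsilon$ is positive and \emph{increasing} on $(\psi(\epsilon),0)$ with $\psi(\epsilon)\to-\infty$, the limit $\tilde Z$ must be nonnegative and nondecreasing on all of $(-\infty,0)$. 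If $u_0^*>3^{1/4}$ then by the second bullet $\tilde Z$ vanishes at some finite $s<0$ and is increasing only on a bounded interval to the left of $0$; for $\epsilon$ small, $C^1$-closeness would force $z_\epsilon$ to vanish (or fail to be increasing) at some $s\in(\psi(\epsilon),0)$, contradicting the hypothesis. If $u_0^*<3^{1/4}$ (and $u_0^*\neq 1$) then by the first bullet $\tilde Z$ oscillates around $1$, hence is not monotone on $(-\infty,0)$, again contradicting monotonicity of $z_\epsilon$ on arbitrarily long intervals; the borderline case $u_0^*=1$ gives $\tilde Z\equiv 1$, which forces $u_0(\epsilon)\to 1$, but then $z_\epsilon$ being increasing and positive toward $s\to\psi(\epsilon)$ with $z_\epsilon(0)\to 1$ is incompatible with the energy balance for long intervals — so this case is excluded as well. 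Hence $u_0^*=3^{1/4}$, and since the limit is independent of the subsequence, (\ref{u_0 to sigma}) holds; moreover $\tilde Z=Z_0$, giving $z_\epsilon\to Z_0$, $z_\epsilon'\to Z_0'$ uniformly on $[-R,0]$, and since $R$ was arbitrary the convergence is uniform on bounded intervals.

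The main obstacle I anticipate is making rigorous the step that excludes $u_0^*\neq 3^{1/4}$: the hypothesis only gives positivity and monotonicity of $z_\epsilon$ on the $\epsilon$-dependent interval $(\psi(\epsilon),0)$, so one must transfer qualitative information about the limit $\tilde Z$ on \emph{all} of $(-\infty,0)$ back to a statement about $z_\epsilon$ on a \emph{fixed} finite window $[-R,0]$ where $R$ is chosen large enough (depending on $u_0^*$) to detect the first sign change or first turning point of $\tilde Z$; then $C^1$-closeness on $[-R,0]$ and the fact that $R\le -\psi(\epsilon)$ eventually do the job. A secondary technical point is verifying the a priori bound on $u_0(\epsilon)$ from below away from $0$: here one uses that an increasing positive solution starting at a value very close to $0$ would stay below $1$ (as in Lemma \ref{cercadecero}) and could not be increasing on an unboundedly long interval while satisfying (\ref{zeta}), so $\liminf u_0(\epsilon)>0$. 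With these bounds and the continuous-dependence machinery in hand, the rest is routine.
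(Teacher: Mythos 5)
Your proposal follows essentially the same route as the paper's proof: obtain compactness of the family $z_\epsilon$ on bounded intervals, pass to a limit solving the autonomous equation (\ref{zzeta}), note that the limit must be positive and increasing on all of $(-\infty,0)$ because $\psi(\epsilon)\to-\infty$, and invoke the trichotomy of the preceding lemma to identify the limit as $Z_0$ and hence force $u_0(\epsilon)\to 3^{1/4}$. The paper derives compactness from uniform boundedness plus the Arzel\`a--Ascoli theorem rather than from continuous dependence and Gronwall, but this is a cosmetic difference, and your handling of the borderline cases is if anything more explicit than the paper's.
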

\vspace{0.3cm}
\begin{proof} 
It is knwon that such  solutions $z_\epsilon$ are uniformly bounded (it can be proved for instance as in
\cite[Lemma 15]{Kwong}). 
Since the family of solutions $\{z_\epsilon(s): 0<\epsilon<\epsilon_0\}$ is equicontinuous it follows from Arzel\`a-Ascoli Theorem that
\[
z_\epsilon(s) \to Z(s)
\]
along a sequence, uniformly on bounded intervals, where $Z$ is a solution of (16). But on a large interval the solution $Z$ must be positive and 
increasing, therefore $Z=Z_0$ by the previous Lemma.
It then follows that the entire family converges to $Z_0$. In a similar manner it is proved that
$
z_\epsilon'(s) \to Z_0'(s).
$
\end{proof}
\hfill$\square$
\vspace{0.3cm}



\section{Proof of the main Theorems}
This section is devoted to the proofs of Theorems \ref{nonexis} and \ref{ksoluciones}.
The proof of Theorem \ref{nonexis} is based on the techniques used by C. Bandle and R. Benguria in \cite{BB} to prove a nonexistence result.
\par
\begin{demnon}
Multiply Eq. (\ref{toro}) by $u'(\theta)$ and integrate  over $(0,\theta_1)$. This yields
\begin{equation}\label{ne1}
\frac{1}{2}u'(\theta_1)^2+2 \displaystyle\int_{0}^{\theta_1} \frac{\cos(2\theta)}{\sin(2\theta)} (u'(\ti))^2 \, d\theta = -\lambda F(\alpha).
\end{equation}
If $ 0<\theta<\theta_1< \frac{\pi}{4}$ then $\frac{\cos(2\theta)}{\sin(2\theta)}>0$. 
Since $ \lambda <0 $, we have a contradiction:
\[
0<\frac{1}{2}u'(\theta_1)^2+2 \displaystyle\int_{0}^{\theta_1} \frac{\cos(2\theta)}{\sin(2\theta)} u'(\ti)^2 \, d\theta = -\lambda F(\alpha)<0.
\]
\end{demnon}
\hfill$\square$

Now we prove Theorem \ref{ksoluciones} for $k=1$. 
We shall show that there exist at least two solutions of problem (\ref{toro}) 
with initial value in the interval $(0,1)$ that have a single spike. 
Let 
$$
\epsilon^2=\displaystyle\frac{1}{|\lambda|}
$$
and $\alpha \in (0,1)$. Then consider the initial value problem

\begin{equation}\label{alpha}
\left\{
\begin{array}{rcl}
\epsilon^2 u''(\theta) + 2\epsilon^2 \frac{ \cos(2\theta)}{\sin(2\theta)} u'(\theta) + u(\theta)^5 - u(\theta)&=& 0  
\hspace{0.5cm}\hbox{in } (0,\theta_1)\\
 u&>&0  \hspace{0.5cm}\hbox{in } (0,\theta_1)\\
u(0)&=&\alpha\\
u'(0)&=&0.\\
\end{array}
\right.
\end{equation}
We denote the solution by $u_{\al, \epsilon}( \theta )$
and define 
\begin{equation}\label{teta-alfa-epsilon}
\Theta(\al, \epsilon)=\sup \{\ti \in (0,{\pi}/{2}): u_{\al, \epsilon} > 0 \hspace{0.5cm} \hbox{in } (0,\ti) \}.
\end{equation}
We will show that for $\epsilon$ small enough there are 
two values $\al_1, \al_2 \in (0,1)$ such that $\Theta(\al_i, \epsilon)=\ti_1$ for $i=1,2$ and the solutions $u_{\al_1, \epsilon}$ 
and $u_{\al_2, \epsilon}$ have exactly $1$ spike on the interval $(0,\ti_1).$ These techniques have been used successfully in \cite{BP}. 
\par
Note that Theorem \ref{nonexis} implies that  $\Theta(\al, \epsilon)> \frac{\pi}{4}$.
It may happen that the solution does not vanish in the interval $ (0,\frac{\pi}{2})$.
Therefore we define $\mathcal{A}(\epsilon)$ as the set of values of $\al$ for which $u_{\al, \epsilon}$ vanishes before $\frac{\pi}{2}$:
\begin{equation}\label{mathcalA}
\mathcal{A}(\epsilon)= \{ \al \in (0,1): 0<\Ti(\al, \epsilon)<{\pi}/{2}\}.
\end{equation}
$\mathcal{A} (\epsilon )$ is an open set and  if $\al \in \mathcal{A} (\epsilon )$ then it follows by uniqueness that 
\[ 
u_{\al, \epsilon}(\Ti(\al, \epsilon))=0  \hspace{0.5 cm}\hbox{ and }\hspace{0.5 cm} u'_{\al, \epsilon}(\Ti(\al, \epsilon))<0.
\]
On the other hand if we fix a $T_0 \in (\frac{\pi}{4},\frac{\pi}{2})$, then by the Sturm Comparison Theorem
for $\epsilon$ small enough the solution $w_\lambda$ of the linear Eq. (\ref{eqw}) has a maximum in $\tau_1^0(\lambda)<T_0$. 
Hence there exists an initial value $\al_0 \in (0,1) $ such that
\[  \tau_1(\al_0)=T_0. \] 
Since $\al_0 $ depends on $\epsilon$ denote
\[
 \al_0=\al_0(\epsilon);\hspace{0.3cm} u_\epsilon(\ti)=u_{\al_0(\epsilon), \epsilon}(\ti)\hspace{0.3cm}\hbox{and}\hspace{0.3cm} 
u_0(\epsilon)=u_\epsilon(T_0).
\]
In other words, fixed  $T_0 \in (\frac{\pi}{4},\frac{\pi}{2})$ and $\epsilon$ small enough,
we can find a solution $u_\epsilon$ of problem (\ref{alpha}) that reaches a first maximum $  u_0(\epsilon)$ at $\ti = T_0$.
It is clear that $  u_0(\epsilon) >1$.
In the following lemmas we show that for $\epsilon$ small enough, $F(u_0(\epsilon))> 0$, where $F$ is the function defined in $(\ref{F(u)})$:
$$F(u)=\frac{1}{6}u^6 - \frac{1}{2}u^2.$$
Then, since $F$ is increasing on $(1, \infty)$ and $ u_0(\epsilon)> 1$, it follows that $ u_0(\epsilon) >\sigma$, where
$\sigma=3^{1/4}$ is the positive zero of $F$. 

\begin{lem}\label{lemaA}
There exist constants $A>0$ and $\epsilon_0>0$ such that
\[F( u_0(\epsilon))>A \epsilon \hspace{0.5cm} \hbox{  for  } \epsilon<\epsilon_0. \]
\end{lem}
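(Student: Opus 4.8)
The plan is to track the energy function along the solution $u_\epsilon$ and show that the dissipation term contributes only an $O(\epsilon)$ loss before the first maximum at $T_0$, which is fixed. Introduce the energy
\[
E_\epsilon(\ti) := \frac{\epsilon^2}{2}\,u_\epsilon'(\ti)^2 + F\bigl(u_\epsilon(\ti)\bigr),
\]
where $F(u)=\tfrac16 u^6 - \tfrac12 u^2$ as in (\ref{F(u)}). Differentiating and using the equation in (\ref{alpha}) gives
\[
E_\epsilon'(\ti) = -2\epsilon^2\,\frac{\cos(2\ti)}{\sin(2\ti)}\,u_\epsilon'(\ti)^2,
\]
so $E_\epsilon$ is non-increasing on $(0,\tfrac\pi4]$ (where $\cos(2\ti)\geq 0$), and at $\ti=T_0 > \tfrac\pi4$ we have, since $u_\epsilon'(T_0)=0$,
\[
F\bigl(u_0(\epsilon)\bigr) = E_\epsilon(T_0) = E_\epsilon(0) + \int_0^{T_0} E_\epsilon'(\ti)\,d\ti = F(\al_0(\epsilon)) - 2\epsilon^2\int_0^{T_0}\frac{\cos(2\ti)}{\sin(2\ti)}\,u_\epsilon'(\ti)^2\,d\ti.
\]
Because $F(\al_0(\epsilon)) \geq -\tfrac12$ (indeed $F \geq -\tfrac12$ on $[0,1]$), it suffices to show the integral term is bounded, i.e. that $\int_0^{T_0}\tfrac{\cos(2\ti)}{\sin(2\ti)}\,u_\epsilon'(\ti)^2\,d\ti = O(1/\epsilon)$, since then $F(u_0(\epsilon)) \geq -\tfrac12 - C\epsilon$, which is not yet the claim. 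So a sharper argument is needed: I will instead use the rescaling $s = (\ti-T_0)/\epsilon$ that underlies Lemma \ref{lemaZ}. Set $z_\epsilon(s) = u_\epsilon(T_0 + \epsilon s)$; then $z_\epsilon$ solves (\ref{zeta}), is positive and increasing on an interval $(\psi(\epsilon),0)$ with $\psi(\epsilon)\to-\infty$ (this uses that $u_\epsilon$ has its first maximum at $T_0$ and Theorem \ref{nonexis} to push the left endpoint away; one checks $\psi(\epsilon)\geq (\tfrac\pi4 - T_0)/\epsilon \to -\infty$), and $z_\epsilon'(0)=0$, $z_\epsilon(0)=u_0(\epsilon)$.

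By Lemma \ref{lemaZ}, $u_0(\epsilon) = z_\epsilon(0) \to 3^{1/4} = \sigma$ as $\epsilon\to 0$. This gives $F(u_0(\epsilon)) \to F(\sigma) = 0$, but from the \emph{correct side}: I must show $F(u_0(\epsilon)) > 0$ and is bounded below by a positive multiple of $\epsilon$. For this I return to the energy identity above but now integrated over the rescaled variable. Rewriting,
\[
F\bigl(u_0(\epsilon)\bigr) = F(\al_0(\epsilon)) - 2\epsilon\int_{\psi(\epsilon)}^{0}\frac{\cos(2T_0+2\epsilon s)}{\sin(2T_0+2\epsilon s)}\,z_\epsilon'(s)^2\,ds.
\]
Since $u_0(\epsilon) > 1$ and $F$ is increasing on $(1,\infty)$, proving $F(u_0(\epsilon)) > A\epsilon$ is equivalent to a lower bound; but the displayed identity expresses $F(u_0(\epsilon))$ as $F(\al_0(\epsilon))$ minus something. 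The resolution is that $\al_0(\epsilon) \to 1$ as $\epsilon\to 0$ — indeed $\tau_1^0(\lambda)$, the maximum of $w_\lambda$, and hence (by Lemma \ref{lim}) $\tau_1(\al_0)=T_0$ forces $\al_0(\epsilon)$ close to $1$ — so $F(\al_0(\epsilon)) = O(\epsilon^2)$ or smaller, which is too weak to beat the $O(\epsilon)$ dissipation. Hence the honest route is: bound the dissipation integral and show it is \emph{negative with the right sign relative to $F(\al_0)$}, i.e. establish directly that $z_\epsilon(0)$ overshoots $\sigma$ by order $\epsilon$. Concretely, I would compare $z_\epsilon$ with $Z_0$: writing $z_\epsilon = Z_0 + \epsilon\,\zeta_\epsilon + o(\epsilon)$ and linearizing (\ref{zeta}) around $Z_0$, the first-order corrector $\zeta_\epsilon$ satisfies an inhomogeneous linear ODE with forcing $-2\,\tfrac{\cos 2T_0}{\sin 2T_0}\,Z_0'$, and evaluating the resulting expansion at $s=0$ together with $z_\epsilon'(0)=0$ yields $u_0(\epsilon) = z_\epsilon(0) = \sigma + c\,\epsilon + o(\epsilon)$ with $c > 0$ (the sign coming from $\cos 2T_0$'s sign and the monotonicity of $Z_0$). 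Then $F(u_0(\epsilon)) = F'(\sigma)\,c\,\epsilon + o(\epsilon)$ with $F'(\sigma) = \sigma^5 - \sigma > 0$, giving the claim with any $A \in (0, F'(\sigma)\,c)$ for $\epsilon < \epsilon_0$.

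The main obstacle is making the perturbation expansion $u_0(\epsilon) = \sigma + c\epsilon + o(\epsilon)$ rigorous with the correct sign of $c$: Lemma \ref{lemaZ} only gives the zeroth-order limit $u_0(\epsilon)\to\sigma$, not the rate, and $Z_0$ decays to $0$ at $-\infty$ only algebraically, so the linearized operator around $Z_0$ on $(-\infty,0]$ has delicate behavior (the homogeneous solution $Z_0'$ is positive and the relevant boundary-value/initial-value problem for $\zeta_\epsilon$ must be solved on a half-line with $\psi(\epsilon)\to-\infty$). I expect this to require either a careful Gronwall-type estimate controlling $z_\epsilon - Z_0$ to order $\epsilon$ uniformly on $[\psi(\epsilon)/2, 0]$, or an integrated energy argument: multiply (\ref{zeta}) by $z_\epsilon'$, integrate from $\psi(\epsilon)$ to $0$, and use that on $(\psi(\epsilon),0)$ one has $T_0 + \epsilon s \in (\tfrac\pi4, T_0)$ so $\cos(2T_0+2\epsilon s) > 0$, which makes the dissipation integral strictly negative — hence $\tfrac16 u_0(\epsilon)^6 - \tfrac12 u_0(\epsilon)^2 = c_\epsilon - (\text{positive})$ where $c_\epsilon = \tfrac16 z_\epsilon(\psi(\epsilon))^6 - \tfrac12 z_\epsilon(\psi(\epsilon))^2 + \tfrac12 z_\epsilon'(\psi(\epsilon))^2$; controlling this requires knowing $z_\epsilon$ near $s=\psi(\epsilon)$, where $z_\epsilon$ is small, so $c_\epsilon \to 0$. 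The quantitative $O(\epsilon)$ lower bound then comes from estimating how fast $z_\epsilon(\psi(\epsilon)) \to 0$ and $z_\epsilon'(\psi(\epsilon))$ behaves, using the explicit asymptotics of the homoclinic-type orbit $Z_0$. I would present the energy-integral version as the cleanest path, deferring the fine asymptotic estimate on $z_\epsilon$ near the left endpoint to a short separate computation modeled on \cite{BP}.
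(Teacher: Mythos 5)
There is a genuine gap, and in fact a sign error that sends the argument in the wrong direction. The paper's proof hinges on the observation that $\frac{\cos(2\ti)}{\sin(2\ti)}$ changes sign at $\ti=\pi/4$: it is \emph{negative} on $(\pi/4,\pi/2)$ (since $2\ti\in(\pi/2,\pi)$), so on $(\pi/4,T_0)$ the ``dissipation'' term is actually a source and the energy \emph{increases} there. You assert the opposite (``$\cos(2T_0+2\epsilon s)>0$, which makes the dissipation integral strictly negative''), which would give $F(u_0(\epsilon))\le c_\epsilon\to 0$ from below and would make the lemma false by your own reasoning. The correct mechanism is to split $F(u_0(\epsilon))=F(\al_0)+J_1(\epsilon)+J_2(\epsilon)$ at $\pi/4$: the genuinely dissipative piece $J_1$ over $(0,\pi/4)$ and the term $F(\al_0)$ are both \emph{exponentially} small, because $u_\epsilon$ is increasing and bounded by $e^{-\beta/\epsilon}$ up to $\pi/4$ (this is the content of Lemmas \ref{lemaphi} and \ref{u(t_1)}, via comparison with an explicit linear problem); while the anti-dissipative piece $J_2=-2\epsilon\int \frac{\cos(2T_0+2\epsilon s)}{\sin(2T_0+2\epsilon s)}z_\epsilon'(s)^2\,ds$ over $(\pi/4,T_0)$ is bounded \emph{below} by $B\epsilon$ with $B=-2\frac{\cos(2T_0)}{\sin(2T_0)}\int_{-L}^0 Z_0'(s)^2\,ds>0$, using only the zeroth-order convergence $z_\epsilon'\to Z_0'$ on compact intervals from Lemma \ref{lemaZ}. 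No first-order corrector expansion $u_0(\epsilon)=\sigma+c\epsilon+o(\epsilon)$ is needed: the factor $\epsilon$ is already explicit in the rescaled integral, so the delicate linearization around $Z_0$ that you flag as ``the main obstacle'' is avoided entirely.

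A second error: you claim $\al_0(\epsilon)\to 1$ and hence $F(\al_0(\epsilon))=O(\epsilon^2)$. In fact $\al_0(\epsilon)\to 0$ exponentially fast ($\al_0(\epsilon)<u_\epsilon(\pi/4)\le e^{-\beta/\epsilon}$, which is the paper's Lemma (C)); this is forced by the requirement that the \emph{first} maximum occur as late as $T_0>\pi/4$, since near $u=0$ the equation behaves like $\epsilon^2u''\approx u$ and the solution must spend a time of order $1$ climbing out of an exponentially small neighbourhood of $0$. (Also note that if $\al_0\to 1$ then $F(\al_0)\to F(1)=-1/3\ne 0$, so the estimate $O(\epsilon^2)$ would not follow even from your premise.) Your opening energy identity is the right starting point and coincides with the paper's; what is missing is the sign analysis at $\pi/4$ and the two exponential-smallness estimates that reduce everything to the single positive $O(\epsilon)$ contribution from the boundary layer at $T_0$.
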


Consider the energy function $E_{\al_0}(\ti)$  associated with the solution $u_{\al_0}$ defined in (\ref{energy}) with $\epsilon^2=\displaystyle\frac{1}{|\lambda|}$. It satisfies
\begin{equation}\label{cota}
 E_{\al_0}(0)= \frac{1}{\epsilon^2} F(\al_0) 
\hspace{1cm}
\hbox{and}
\hspace{1cm}
  E_{\al_0}(T_0) = \frac{1}{\epsilon^2} F( u_0(\epsilon)).
\end{equation}
Integration of $E_{\al_0}'$ over $(0,T_0)$ yields
\[F( u_0(\epsilon))-F(\al_0)= -2 \epsilon^2 \displaystyle\int_0^{T_0} \frac{\cos(2\ti)}{\sin(2\ti)} u_{\epsilon}'(\ti)^2 \, d\ti.\]
Define
\begin{equation}\label{jotas}
\begin{array}{rcl}
J_1(\epsilon)&=&-2 \epsilon^2 \displaystyle\int_0^{\frac{\pi}{4}} \frac{\cos(2\ti)}{\sin(2\ti)} u_{\epsilon}'(\ti)^2 \, d\ti,  
\\
\\
J_2(\epsilon)&=&-2\epsilon^2  \displaystyle\int_{\frac{\pi}{4}}^{T_0} \frac{\cos(2\ti)}{\sin(2\ti)} u_{\epsilon}'(\ti)^2 \, d\ti.\\
\end{array}
\end{equation}
The expression for $F ( u_0(\epsilon))$ then becomes
\begin{equation}\label{F_0}
F(u_0(\epsilon))= F(\al_0) + J_1(\epsilon) + J_2(\epsilon).
\end{equation}
The following lemmas are used to estimate the terms on the right hand side of (\ref{F_0}).

Let  $\kappa>0$ be a constant such that
\begin{equation}\label{twei}
s^5 -s+\kappa s < 0   \hspace{0.5cm}  \hbox{for } 0<s<1/2.
\end{equation}
Write $\ti=T_0 + \epsilon s$ and let $z_{\epsilon}(s)=u_{\epsilon}(\ti)$. Then $z_\epsilon$ solves problem (\ref{zeta}).
By Lemma \ref{lemaZ} we know that if $Z_0$ is the solution of (\ref{zzeta})-(\ref{zcondition1}) such that $Z(0)=3^{1/4}$, then there is a $s_0<0$ such that 
 $Z_0(s_0)=1/4$ and hence $z_{\epsilon}(s_0)=u_\epsilon(T_0 + \epsilon s_0) \to 1/4$ as $\epsilon \to 0$.
It follows that for $\epsilon$ small enough,
\[u_\epsilon(T_0 + \epsilon s_0)<\frac{1}{2}.\]
Let $t_0= T_0 + \epsilon s_0$, with $\epsilon$ so that $\frac{\pi}{4}<t_0<T_0$ . 
Since $u$  is increasing on $(0,T_0)$, it yields
\begin{equation}\label{eins}
u_\epsilon(\ti)<\frac{1}{2}  \hspace{0.5cm} \hbox{and }   \hspace{0.5cm} u_{\epsilon}^5(\ti)-u_{\epsilon}(\ti)+\kappa u_{\epsilon}(\ti)<0
\end{equation}
for $0<\ti<t_0.$
\begin{lem}\label{u(t_1)}
Suppose  
$u_{\epsilon}$ is a solution of problem (\ref{alpha}) which is monotone on an 
interval $[t_1 - \delta, t_1 + \delta ] \subset (0, \pi /2 )$ and $u_{\epsilon}(t_1\pm \delta)<1/2$. Then there exists a constant $\beta>0$ and $\epsilon_1 >0$ such that if $\epsilon \in (0, \epsilon_1 )$ then
\[ u_{\epsilon}(t_1) \leq e^{-\frac{\beta}{\epsilon}}.\]
\end{lem}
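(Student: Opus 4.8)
The plan is to bound $u_\epsilon(t_1)$ from above by the barrier function $\varphi_\epsilon$ of Lemma \ref{lemaphi}, using an elliptic comparison (maximum) principle on $[t_1-\delta,t_1+\delta]$. The first step is the observation that $0<u_\epsilon<1/2$ on all of $[t_1-\delta,t_1+\delta]$: $u_\epsilon$ is monotone there, positive, and both endpoint values are $<1/2$, so its maximum over the interval is an endpoint value and is $<1/2$. Hence, taking for $\kappa$ the constant of (\ref{twei}), I get $u_\epsilon(\theta)^5-u_\epsilon(\theta)+\kappa u_\epsilon(\theta)<0$ for every $\theta\in[t_1-\delta,t_1+\delta]$.

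Next I would pass to the interval $[-\delta,\delta]$ by setting $v(s):=u_\epsilon(t_1+s)$, so that $v$ solves $\epsilon^2v''+2\epsilon^2\cot(2(t_1+s))\,v'+v^5-v=0$ with $v(\pm\delta)=u_\epsilon(t_1\pm\delta)<1/2$. Put $K_0:=\max_{[t_1-\delta,t_1+\delta]}|\cot(2\theta)|$, which is finite since $[t_1-\delta,t_1+\delta]$ is a compact subset of $(0,\pi/2)$, and choose $K:=-K_0$ when $u_\epsilon$ is nondecreasing on the interval and $K:=K_0$ when it is nonincreasing; then in either case $\bigl(\cot(2(t_1+s))+K\bigr)v'(s)\le 0$ on $[-\delta,\delta]$. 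Denote by $M$ the operator $M\varphi:=\epsilon^2\varphi''-2\epsilon^2K\varphi'-\kappa\varphi$ of equation (\ref{phi}). Eliminating $\epsilon^2v''$ with the equation for $v$ gives
\[
Mv=\bigl(v-v^5-\kappa v\bigr)-2\epsilon^2\bigl(\cot(2(t_1+s))+K\bigr)v'>0\qquad\text{on }(-\delta,\delta),
\]
because the first bracket is strictly positive by the first step and the remaining term is $\ge 0$ by the choice of $K$.

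Now I would invoke Lemma \ref{lemaphi} with the present $\delta$, $\kappa$ and $K$ to obtain $\beta>0$, $\epsilon_1>0$ and, for $\epsilon\in(0,\epsilon_1)$, the solution $\varphi_\epsilon$ of (\ref{phi}) with $\varphi_\epsilon(0)<e^{-\beta/\epsilon}$, and compare $v$ with it. The difference $\psi:=v-\varphi_\epsilon$ satisfies $M\psi=Mv>0$ in $(-\delta,\delta)$ and $\psi(\pm\delta)=v(\pm\delta)-1/2<0$. If $\psi$ had a nonnegative maximum, it would be attained at an interior point $s_0$ (the boundary values being negative), and there $\psi'(s_0)=0$, $\psi''(s_0)\le 0$ force $M\psi(s_0)=\epsilon^2\psi''(s_0)-\kappa\psi(s_0)\le 0$, contradicting $M\psi>0$. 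Therefore $\psi\le 0$, i.e. $v\le\varphi_\epsilon$ on $[-\delta,\delta]$, and evaluating at $s=0$ yields $u_\epsilon(t_1)=v(0)\le\varphi_\epsilon(0)<e^{-\beta/\epsilon}$ for $\epsilon\in(0,\epsilon_1)$, as asserted.

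The only delicate point is the first-order term $2\epsilon^2\cot(2\theta)u_\epsilon'$, and this is exactly where the monotonicity hypothesis enters: to fold that term into the constant-coefficient operator $M$ with the right sign I use that $u_\epsilon'$ keeps a fixed sign on $[t_1-\delta,t_1+\delta]$ together with the freedom to take $|K|$ as large as needed — and it is this $K$, produced from the geometry of the interval, that must then be fed into Lemma \ref{lemaphi}. Beyond that, the argument is a routine barrier comparison.
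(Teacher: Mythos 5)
Your proposal is correct and follows essentially the same route as the paper: fold the nonconstant first--order term $2\epsilon^2\cot(2\theta)u_\epsilon'$ into the constant--coefficient operator of Lemma \ref{lemaphi} using the sign of $u_\epsilon'$ and a suitable choice of $K$, then compare $u_\epsilon$ with the barrier $\varphi_\epsilon$ via the maximum/minimum principle. The only differences are cosmetic (you work with $u_\epsilon-\varphi_\epsilon$ instead of $\varphi_\epsilon-u_\epsilon$ and spell out the interior--maximum argument and the bound $u_\epsilon<1/2$ on the whole interval, which the paper leaves implicit).
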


\begin{proof}
Suppose that $u_{\epsilon}$ is  increasing on  $(t_1-\delta, t_1 + \delta)$ and choose $K$ such that 
$\frac{\cos(2\ti)}{\sin(2\ti)}+K <0$ for $\ti \in (t_1-\delta, t_1 + \delta)$.
Let $\varphi_\epsilon$ the solution of problem (\ref{phi}) centered in $t_1$.
Let $v=\varphi_\epsilon-u_{\epsilon}$. Thus the function $v$ satisfies
\begin{equation}
\begin{array}{rcl}
 \epsilon^2v'' - 2\epsilon^2 K v' - \kappa v &=& -\epsilon^2 u_{\epsilon}''+ 2\epsilon^2 K u_{\epsilon}'+\kappa u_{\epsilon} \\
                         &=& 2 \epsilon^2 (\frac{\cos(2\ti)}{\sin(2\ti)}+K ) u_{\epsilon}' +u_{\epsilon}^5-u_{\epsilon}+\kappa u_{\epsilon} \\
                         &<& 2 \epsilon^2 (\frac{\cos(2\ti)}{\sin(2\ti)} +K )u_{\epsilon}' \\
&\leq&0\\
\end{array}
\end{equation}
for $\ti \in (t_1-\delta, t_1 + \delta)$ because $u_{\epsilon}' \geq 0$.
Moreover $v(t_1\pm \delta )>0$. Then it follows from the Minimum Principle that $v(\ti)>0$ for all $\ti$ in the interval,
and in particular for $\ti=t_1$. It follows from Lemma \ref{lemaphi} that there exist $\beta , \epsilon_1>0 $ such that
if $\epsilon < \epsilon_1 $ 

\[
 u_{\epsilon}(t_1)< \varphi_\epsilon(t_1)<e^{-\beta/\epsilon}.
\]

The case when $u_{\epsilon}$ is  decreasing is proved similarly, picking $K$ such that $\frac{\cos(2\ti)}{\sin(2\ti)}+K >0$.
\end{proof}
\hfill$\square$


Then there exists an interval $(\pi/4-\delta, \pi/4+\delta)$ where the solution $u_\epsilon$ of problem (\ref{alpha}) is strictly 
increasing and so $u_\epsilon(\pi/4\pm\delta)<1/2.$ From Lemma \ref{u(t_1)} it follows that if $\epsilon<\epsilon_1$, then
\begin{equation}\label{previus}
u_{\epsilon}\left(\pi/4\right) \leq e^{-\frac{\beta}{\epsilon}}.
\end{equation}

\begin{lemanonum}[\bf{A}]
There exist positive constants $A$ and $\epsilon_0$ such that
\[ |J_1(\epsilon) |< A \epsilon^{-2} e^{-\frac{2\beta}{\epsilon}}  \hspace{0.5cm}  \hbox{  for } \epsilon <\epsilon_0.\]
\end{lemanonum}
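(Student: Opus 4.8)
The plan is to estimate the integral $J_1(\epsilon) = -2\epsilon^2 \int_0^{\pi/4} \frac{\cos(2\ti)}{\sin(2\ti)} u_\epsilon'(\ti)^2\,d\ti$ directly, exploiting the exponentially small bound on $u_\epsilon(\pi/4)$ from \eqref{previus}. First I would note that on $(0,\pi/4)$ the solution $u_\epsilon$ is increasing (it rises from $\al_0<1$ toward its first maximum at $T_0 > \pi/4$), so $u_\epsilon' \geq 0$ and the weight $\frac{\cos(2\ti)}{\sin(2\ti)}$ is positive there, whence $J_1(\epsilon) < 0$ and $|J_1(\epsilon)| = 2\epsilon^2 \int_0^{\pi/4} \frac{\cos(2\ti)}{\sin(2\ti)} u_\epsilon'(\ti)^2\,d\ti$. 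The singularity of the weight at $\ti = 0$ is harmless because $u_\epsilon'(0) = 0$ and indeed $u_\epsilon'$ vanishes to first order near $0$; but rather than analyze that endpoint I would reduce everything to a bound on the whole of $u_\epsilon$ on $[0,\pi/4]$.

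The key step is the observation that $0 \le u_\epsilon(\ti) \le u_\epsilon(\pi/4) \le e^{-\beta/\epsilon}$ for all $\ti \in [0,\pi/4]$, by monotonicity and \eqref{previus}. I then want to convert this pointwise bound on $u_\epsilon$ into a bound on $\int_0^{\pi/4}\frac{\cos(2\ti)}{\sin(2\ti)} u_\epsilon'^2\,d\ti$. The cleanest route is to use the energy-type identity already in hand: multiplying the ODE in \eqref{alpha} by $u_\epsilon'$ and integrating over $(0,\pi/4)$ gives, after rearranging and recalling $E_{\al_0}$ and \eqref{cota},
\[
2\epsilon^2\int_0^{\pi/4}\frac{\cos(2\ti)}{\sin(2\ti)}u_\epsilon'^2\,d\ti \;=\; F(\al_0) - F\big(u_\epsilon(\pi/4)\big) - \tfrac{\epsilon^2}{2}u_\epsilon'(\pi/4)^2 + \tfrac{\epsilon^2}{2}u_\epsilon'(0)^2.
\]
Wait — this is not quite what I want, since $F(\al_0)$ is only $O(1)$, not exponentially small; so I should instead integrate over $(0,\pi/4)$ with the energy written relative to the value at $\pi/4$, or better, avoid $\al_0$ altogether. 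The correct move: multiply the equation by $u_\epsilon'$, integrate over $(t,\pi/4)$ for the right endpoint, and use that both $u_\epsilon(\pi/4)$ and (by Lemma \ref{u(t_1)} applied on the increasing interval, or directly from the equation and the smallness of $u_\epsilon$) $u_\epsilon'(\pi/4)$ are controlled. Actually the efficient argument is: since $u_\epsilon \le e^{-\beta/\epsilon}$ throughout $[0,\pi/4]$ and $u_\epsilon^5 - u_\epsilon < 0$ there with $|u_\epsilon^5 - u_\epsilon| \le u_\epsilon \le e^{-\beta/\epsilon}$, the term $\epsilon^2 u_\epsilon'' + 2\epsilon^2\frac{\cos 2\ti}{\sin 2\ti}u_\epsilon' = u_\epsilon - u_\epsilon^5$ is pointwise $O(e^{-\beta/\epsilon})$; multiplying by $u_\epsilon'$ and integrating over $(0,\pi/4)$ yields
\[
\tfrac{\epsilon^2}{2}u_\epsilon'(\pi/4)^2 + 2\epsilon^2\int_0^{\pi/4}\tfrac{\cos 2\ti}{\sin 2\ti}u_\epsilon'^2\,d\ti = \int_0^{\pi/4}(u_\epsilon - u_\epsilon^5)u_\epsilon'\,d\ti = F(u_\epsilon(\pi/4)) \cdot(-1)\big|...
\]
— more simply $= -\big[F(u_\epsilon)\big]_0^{\pi/4}$ with $F$ as in \eqref{F(u)}, which equals $-F(u_\epsilon(\pi/4)) + F(\al_0)$; and since both $u_\epsilon(\pi/4)$ and $\al_0$... no. The genuinely clean statement is that $|F(u_\epsilon(\pi/4))| \le C\,u_\epsilon(\pi/4)^2 \le C e^{-2\beta/\epsilon}$, and I must handle $F(\al_0)$ separately — but $\al_0 \to 3^{1/4}$... so $F(\al_0)$ is NOT small. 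This forces the integration to be done on a subinterval avoiding the turning point, or to recognize that on $(0,\pi/4)$ one has $u_\epsilon$ uniformly exponentially small so $\al_0 = u_\epsilon(0) \le e^{-\beta/\epsilon}$ as well, making $F(\al_0) = O(e^{-2\beta/\epsilon})$ too.

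That last point is the resolution and also the main obstacle: I must verify that $\al_0 = u_\epsilon(0)$ is itself exponentially small, i.e. that the spike solution $u_\epsilon$ starts exponentially close to $0$. This follows from Lemma \ref{u(t_1)} and monotonicity exactly as \eqref{previus} was derived: $u_\epsilon$ is increasing on all of $[0,\pi/4]$, so $\al_0 = u_\epsilon(0) \le u_\epsilon(\pi/4) \le e^{-\beta/\epsilon}$. With this in hand, $|F(\al_0)| \le \tfrac12\al_0^2(1 - \tfrac13\al_0^4) \le \al_0^2 \le e^{-2\beta/\epsilon}$ and likewise $|F(u_\epsilon(\pi/4))| \le e^{-2\beta/\epsilon}$, so the energy identity gives
\[
2\epsilon^2\int_0^{\pi/4}\tfrac{\cos 2\ti}{\sin 2\ti}u_\epsilon'^2\,d\ti \;\le\; |F(\al_0)| + |F(u_\epsilon(\pi/4))| \;\le\; 2e^{-2\beta/\epsilon},
\]
whence $|J_1(\epsilon)| \le 2e^{-2\beta/\epsilon} \le A\epsilon^{-2}e^{-2\beta/\epsilon}$ for $\epsilon$ small, which is even stronger than claimed. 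I would present it in the stated weaker form to match the statement. I expect the only subtlety requiring care is the positivity/non-vanishing of the weight integral and the sign bookkeeping in the energy identity at the $\ti=0$ endpoint, where $\frac{\cos 2\ti}{\sin 2\ti}$ blows up; but since $u_\epsilon'(\ti)^2 = O(\ti^2)$ near $0$ (from $u_\epsilon'(0)=0$ and $C^1$ regularity) the integrand is bounded there, so the integral is a genuine finite positive quantity and all manipulations are legitimate.
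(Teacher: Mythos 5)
Your final argument is correct, but it reaches the estimate by a genuinely different route from the paper's. The paper integrates the ODE once over $(0,\ti)$, discards the two terms of favorable sign (the $u_\epsilon^5$ term and the weighted integral of $u_\epsilon'$, both of which can be dropped because $u_\epsilon$ is positive and increasing and the weight $\frac{\cos 2\ti}{\sin 2\ti}$ is positive on $(0,\pi/4)$), and obtains the pointwise bound $u_\epsilon'(\ti) < \epsilon^{-2}u_\epsilon(\pi/4)\,\ti \le \epsilon^{-2}e^{-\beta/\epsilon}\ti$; substituting this into $J_1$ gives exactly the stated bound with $A = 2\int_0^{\pi/4}\frac{\cos 2\ti}{\sin 2\ti}\,\ti^2\,d\ti$. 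You instead use the first integral of the equation (multiply by $u_\epsilon'$ and integrate over all of $(0,\pi/4)$), which expresses $|J_1(\epsilon)|$ as $F(\al_0) - F\bigl(u_\epsilon(\pi/4)\bigr) - \tfrac{\epsilon^2}{2}u_\epsilon'(\pi/4)^2 \le |F(\al_0)| + |F(u_\epsilon(\pi/4))|$, and you then observe that $\al_0 \le u_\epsilon(\pi/4) \le e^{-\beta/\epsilon}$ by monotonicity --- which is precisely the content of the paper's Lemma (C), proved separately there. Your route is legitimate (the singularity of the weight at $\ti=0$ is integrable since $u_\epsilon'(\ti)=O(\ti)$, as you note) and in fact yields the sharper conclusion $|J_1(\epsilon)| = O(e^{-2\beta/\epsilon})$ without the $\epsilon^{-2}$ prefactor; the price is that it folds Lemma (C) into Lemma (A) rather than keeping the two estimates independent, and it foregoes the pointwise derivative bound. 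Both arguments rest on the same inputs: monotonicity of $u_\epsilon$ on $(0,T_0)$, positivity of the weight on $(0,\pi/4)$, and the exponential bound (\ref{previus}). I would only ask you to clean up the exposition: remove the several false starts and the truncated display, and state the energy identity once, correctly, with the boundary term at $\ti=\pi/4$ discarded by sign.
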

\begin{proof}
Let $\ti <\frac{\pi}{4}$. Integration of Eq. (\ref{alpha}) over $(0,\ti)$ yields
\[
\epsilon^2 u_\epsilon '(\ti) = -2\displaystyle\int_0^\ti \frac{\cos(2s)}{\sin(2s)}u_\epsilon '(s)\, ds  +\displaystyle\int_0^\ti (u_\epsilon(s)-u_\epsilon(s)^5) \, ds. 
\]
Since $u>0$ on $(0,\pi/4)$ we have

\[
\epsilon^2 u_\epsilon '(\ti) < -2\displaystyle\int_0^\ti \frac{\cos(2s)}{\sin(2s)}u_\epsilon '(s)\, ds  +\displaystyle\int_0^\ti u_\epsilon(s)\, ds
\]
Note that $\frac{\cos(2s)}{\sin(2s)}>0$ for $ s\in (0,\ti)$ and $u_\epsilon$ is increasing on $(0,\pi/4)$.
Consequently
\[
 u_\epsilon '(\ti)<\epsilon^{-2}u_\epsilon(\pi/4)\ti 
\]
and by previous remark we have
\[
 u_\epsilon '(\ti)^2<\epsilon^{-4}u_\epsilon(\pi/4)^2\ti ^2 <\epsilon^{-4} e^{-2\beta/\epsilon} \ti ^2
\]
 for all $\epsilon<\epsilon_1$. Finally
\begin{equation}
\begin{array}{rcl}
|J_1(\epsilon)|&=&2\epsilon^2  \displaystyle\int_{0}^{\frac{\pi}{4}} \frac{\cos(2\ti)}{\sin(2\ti)} u_{\epsilon}'(\ti)^2 \, d\ti.\\
&<&2 \epsilon^{-2} e^{-2\beta/\epsilon}\displaystyle\int_0^{\frac{\pi}{4}} \frac{\cos(2\ti)}{\sin(2\ti)} \ti^2 \, d\ti.\\
\end{array}
\end{equation}
Let $ A:=2\displaystyle\int_0^{\frac{\pi}{4}} \frac{\cos(2\ti)}{\sin(2\ti)} \ti^2 \, d\ti >0$. Then we have
$| J_1(\epsilon) |< A \epsilon^{-2} e^{-2\beta/\epsilon} .$
\end{proof}
\hfill$\square$

\begin{lemanonum}[\bf{B}]
There exist constants $B$ and $\epsilon_0>0$ such that
\[J_2(\epsilon)\geq B \epsilon  \hspace{0.5cm}\hbox{ for  } \epsilon<\epsilon_0. \]
\end{lemanonum}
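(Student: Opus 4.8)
The plan is to reduce the lower bound on $J_2(\epsilon)$ to the behaviour of the rescaled solution $z_\epsilon$ near its first maximum, which is precisely what Lemma \ref{lemaZ} controls. First I would carry out the change of variables $\ti = T_0 + \epsilon s$ already introduced above in the integral defining $J_2(\epsilon)$. Since $z_\epsilon(s) = u_\epsilon(T_0 + \epsilon s)$ satisfies $u_\epsilon'(\ti) = \epsilon^{-1} z_\epsilon'(s)$ and $d\ti = \epsilon\,ds$, a net factor $\epsilon^2 \cdot \epsilon^{-2}\cdot \epsilon = \epsilon$ is produced, and the integral becomes
\[
J_2(\epsilon) = -2\epsilon \int_{(\pi/4 - T_0)/\epsilon}^{0} \frac{\cos(2T_0 + 2\epsilon s)}{\sin(2T_0 + 2\epsilon s)}\, z_\epsilon'(s)^2 \, ds .
\]
So it suffices to bound the remaining integral from below by a positive constant for $\epsilon$ small.

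Next I would exploit the sign of the weight: for $\ti \in (\pi/4, T_0) \subset (\pi/4, \pi/2)$ one has $\cos(2\ti) < 0 < \sin(2\ti)$, hence $-\dfrac{\cos(2T_0 + 2\epsilon s)}{\sin(2T_0 + 2\epsilon s)}\, z_\epsilon'(s)^2 \geq 0$ on the entire range of integration. This positivity lets me discard everything outside a fixed compact interval $[-M,0]$: since $(\pi/4 - T_0)/\epsilon \to -\infty$, for $\epsilon$ small $[-M,0]$ lies in the domain of integration and
\[
J_2(\epsilon) \geq -2\epsilon \int_{-M}^{0} \frac{\cos(2T_0 + 2\epsilon s)}{\sin(2T_0 + 2\epsilon s)}\, z_\epsilon'(s)^2 \, ds .
\]

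Then I would pass to the limit $\epsilon \to 0$ on the compact interval $[-M,0]$. Because $u_\epsilon$ is positive and increasing on $(0,T_0)$, the rescaled $z_\epsilon$ is positive and increasing on $(-T_0/\epsilon,0)$, so Lemma \ref{lemaZ} applies and yields $z_\epsilon' \to Z_0'$ uniformly on $[-M,0]$, where $Z_0$ is the solution of (\ref{zzeta})--(\ref{zcondition1}) with $Z_0(0) = 3^{1/4}$; also $\dfrac{\cos(2T_0 + 2\epsilon s)}{\sin(2T_0 + 2\epsilon s)} \to \dfrac{\cos(2T_0)}{\sin(2T_0)}$ uniformly, a negative constant. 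Hence the right-hand side converges to $-2\,\dfrac{\cos(2T_0)}{\sin(2T_0)}\int_{-M}^{0} Z_0'(s)^2\,ds$. The one place where there is genuine content rather than bookkeeping is checking that this limit is strictly positive: it is a negative constant times $\int_{-M}^0 Z_0'(s)^2\,ds$, and by the lemma preceding Lemma \ref{lemaZ} the function $Z_0$ is strictly increasing on $(-\infty,0)$, hence nonconstant, so $Z_0'$ is not identically zero and $M$ can be fixed large enough that $\int_{-M}^0 Z_0'(s)^2\,ds > 0$. With $M$ so fixed, convergence to a positive limit produces constants $B>0$ and $\epsilon_0>0$ with $-2\int_{-M}^0 \frac{\cos(2T_0 + 2\epsilon s)}{\sin(2T_0 + 2\epsilon s)}\, z_\epsilon'(s)^2\,ds \geq B$ for $\epsilon<\epsilon_0$, and therefore $J_2(\epsilon) \geq B\epsilon$.

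The main obstacle is thus not the estimate but securing the qualitative input: verifying that Lemma \ref{lemaZ} legitimately applies (monotonicity of $u_\epsilon$ on $(0,T_0)$ and $-T_0/\epsilon \to -\infty$ are what is needed), that the limiting profile $Z_0$ is genuinely nonconstant so that $\int_{-M}^0 (Z_0')^2 > 0$ for large $M$, and that the sign of $\cos(2\ti)/\sin(2\ti)$ on $(\pi/4,T_0)$ is what simultaneously makes $J_2(\epsilon)>0$ and licenses the truncation to a fixed compact interval. Everything else is uniform convergence on a fixed compact set.
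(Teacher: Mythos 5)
Your proposal is correct and follows essentially the same route as the paper: the same rescaling $\ti = T_0+\epsilon s$, the same use of the sign of $\cos(2\ti)/\sin(2\ti)$ on $(\pi/4,T_0)$ to truncate to a fixed compact interval, and the same passage to the limit via Lemma \ref{lemaZ} to obtain the lower bound $-2\frac{\cos(2T_0)}{\sin(2T_0)}\int_{-L}^{0}Z_0'(s)^2\,ds>0$. The only difference is cosmetic: you spell out why $\int_{-M}^{0}Z_0'(s)^2\,ds>0$ (nonconstancy of $Z_0$), a point the paper leaves implicit.
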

\begin{proof}
We shall see that \[B:=\liminf_{\epsilon\to 0} \frac{1}{\epsilon}J_2(\epsilon) >0.\]
Replacing $u_{\epsilon}$ by $z_\epsilon$ in (\ref{jotas}), we have
\[
 J_2(\epsilon)=-2\epsilon  \displaystyle\int_{(\frac{\pi}{4}-T_0)/\epsilon}^{0}  \frac{ \cos(2T_0 + 2s\epsilon)}{\sin(2T_0 + 
 2s\epsilon)}z_{\epsilon}'(s)^2 \, ds.
\]
Let $Z_0$ be the solution of problem (\ref{zzeta})-(\ref{zcondition1}) with $Z(0)=\sigma$.
It follows from Lemma \ref{lemaZ} that for any $L>0$:
\[
 \displaystyle\int_{-L}^{0}\frac{ \cos(2T_0 + 2s\epsilon)}{\sin(2T_0 + 2s\epsilon)}z_{\epsilon}'(s)^2 \, ds 
 \to \frac{ \cos(2T_0)}{\sin(2T_0)} \displaystyle\int_{-L}^{0}Z_0'(s)^2 \, ds 
\]
as $ \epsilon \to 0$. Note that if  $ \frac{\frac{\pi}{4}-T_0}{\epsilon} < -L<0$ then we have that
\[
\frac{1}{\epsilon} J_2(\epsilon) 
\geq -2\displaystyle\int_{-L}^{0}\frac{ \cos(2T_0 + 2s\epsilon)}{\sin(2T_0 + 2s\epsilon)}z_{\epsilon}'(s)^2 \, ds,
\]
Then 
\begin{equation}
\begin{array}{rcl}
B:=\liminf \frac{1}{\epsilon} J_2(\epsilon)&\geq& -2\liminf \displaystyle\int_{-L}^{0}\frac{ \cos(2T_0 + 2s\epsilon)}{\sin(2T_0 + 
2s\epsilon)}z_{\epsilon}'(s)^2 \, ds \\
\\
&=& -2\lim_{\epsilon \to 0} \displaystyle\int_{-L}^{0}\frac{ \cos(2T_0 + 2s\epsilon)}{\sin(2T_0 + 2s\epsilon)}z_{\epsilon}'(s)^2 \, ds \\
\\
&=& -2\frac{ \cos(2T_0)}{\sin(2T_0)}\displaystyle\int_{-L}^{0}Z_0'(s)^2 \, ds >0.\\
\end{array}
\end{equation}
\end{proof}
\hfill$\square$

\begin{lemanonum}[\bf{C}]
For $\epsilon$ small enough, there exists a positive constant $C$ such that
\[|F(\al_0(\epsilon))|\leq C e^{-\frac{2\beta}{\epsilon}. }\]
\end{lemanonum}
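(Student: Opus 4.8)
The plan is to bound $|F(\al_0(\epsilon))|$ from above by the potential energy of $u_\epsilon$ at $\theta=\pi/4$, which we already control by the exponential estimate (\ref{previus}). The starting point is the energy function $E_{\al_0}$ defined in (\ref{energy}) (with $\epsilon^2=1/|\lambda|$), which satisfies
\[
E_{\al_0}(\ti)=\frac{u_\epsilon'(\ti)^2}{2}+\frac{1}{\epsilon^2}F(u_\epsilon(\ti)),
\qquad
E_{\al_0}'(\ti)=-2\frac{\cos(2\ti)}{\sin(2\ti)}u_\epsilon'(\ti)^2 .
\]
Since $\cos(2\ti)/\sin(2\ti)>0$ on $(0,\pi/4)$, the function $E_{\al_0}$ is decreasing on $[0,\pi/4]$, so $E_{\al_0}(\pi/4)\le E_{\al_0}(0)=\epsilon^{-2}F(\al_0)$.

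Next I would read this inequality at $\ti=\pi/4$ and discard the nonnegative kinetic term: from
\[
\frac{1}{\epsilon^2}F(\al_0)\;\ge\;E_{\al_0}(\pi/4)\;=\;\frac{u_\epsilon'(\pi/4)^2}{2}+\frac{1}{\epsilon^2}F\!\left(u_\epsilon(\pi/4)\right)\;\ge\;\frac{1}{\epsilon^2}F\!\left(u_\epsilon(\pi/4)\right),
\]
one gets $F(\al_0)\ge F(u_\epsilon(\pi/4))$. Since $\al_0\in(0,1)$ we have $F(\al_0)<0$, and by (\ref{previus}) we have $0<u_\epsilon(\pi/4)\le e^{-\beta/\epsilon}<\sigma$ for $\epsilon$ small, hence also $F(u_\epsilon(\pi/4))<0$. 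Thus $0>F(\al_0)\ge F(u_\epsilon(\pi/4))$, i.e. $|F(\al_0)|\le |F(u_\epsilon(\pi/4))|$.

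Finally I would use the elementary bound $|F(u)|=\tfrac12 u^2-\tfrac16 u^6\le \tfrac12 u^2$ for $u\in[0,1]$ together with (\ref{previus}) to conclude
\[
|F(\al_0(\epsilon))|\;\le\;|F(u_\epsilon(\pi/4))|\;\le\;\tfrac12\,u_\epsilon(\pi/4)^2\;\le\;\tfrac12\,e^{-2\beta/\epsilon},
\]
so the claim holds with $C=\tfrac12$ (and $\epsilon_0=\epsilon_1$ from Lemma \ref{u(t_1)}). There is no real obstacle here: the only point requiring care is checking that $u_\epsilon$ is increasing through $\pi/4$ so that $u_\epsilon'(\pi/4)^2\ge 0$ is the sign we dropped and that the estimate (\ref{previus}) applies — but this is exactly the setup used to derive (\ref{previus}), where $u_\epsilon$ is strictly increasing on $(0,T_0)\supset(\pi/4-\delta,\pi/4+\delta)$ and $u_\epsilon(\pi/4\pm\delta)<1/2$.
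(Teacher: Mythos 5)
Your proposal is correct. It differs mildly from the paper's argument in the mechanism used to transfer the exponential smallness of $u_\epsilon(\pi/4)$ back to $\al_0(\epsilon)$: the paper simply observes that $u_\epsilon$ is increasing on $(0,\pi/4)$, so $\al_0(\epsilon)=u_\epsilon(0)<u_\epsilon(\pi/4)\leq e^{-\beta/\epsilon}$ by (\ref{previus}), and then uses that $|F|$ is increasing on $(0,1)$ (indeed $\frac{d}{du}(-F)(u)=u-u^5>0$ there) to conclude $|F(\al_0)|\leq |F(e^{-\beta/\epsilon})|\leq \tfrac12 e^{-2\beta/\epsilon}$. You instead invoke the monotonicity of the energy (\ref{energy}) on $[0,\pi/4]$ to get $F(\al_0)\geq F(u_\epsilon(\pi/4))$, check both sides are negative, and then bound $|F(u)|\leq \tfrac12 u^2$. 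Both routes rest entirely on the key estimate (\ref{previus}) and give the same constant; the energy route spares you the (easy) verification that $|F|$ is monotone on $(0,1)$, while the paper's route spares you the energy bookkeeping and the sign discussion. Either is acceptable, and your final bound $|F(\al_0(\epsilon))|\leq \tfrac12 e^{-2\beta/\epsilon}$ matches the paper's conclusion.
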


\begin{proof}
Since $u_\epsilon$  is increasing on  $(0, \pi/4)$ it follows from (\ref{previus}) that
\[
\al(\epsilon)<u_\epsilon(\pi/4) \leq e^{-\frac{\beta}{\epsilon}}  \hspace{0.5cm}  \hbox{ for }  \epsilon <\epsilon_1,
\]
and since $|F|$ is increasing on $(0,1)$ it results that for $\epsilon$ small enough

\[
|F(\al(\epsilon))|<|F(e^{-\frac{\beta}{\epsilon}} )| \leq \frac{\kappa}{2}e^{-\frac{2\beta}{\epsilon}}.
\]

\end{proof}
\hfill$\square$

From Lemmas (A), (B) and (C) it follows that

\[
F(u_0(\epsilon))= F(\al_0) + J_1(\epsilon) + J_2(\epsilon),
\]
with
\[
 |J_1(\epsilon) |< A \epsilon^{-2} e^{-2\beta/\epsilon}, \hspace{0.5cm}
  J_2(\epsilon)\geq B \epsilon , \hspace{0.5cm}
 |F(\al_0(\epsilon))|\leq C e^{-2\beta/\epsilon}
\]
for $\epsilon$ small enough.
From there follows Lemma \ref{lemaA}.
\par
\hfill$\square$

\begin{figure}[th]
   \centering
    \includegraphics[width=0.5\textwidth]{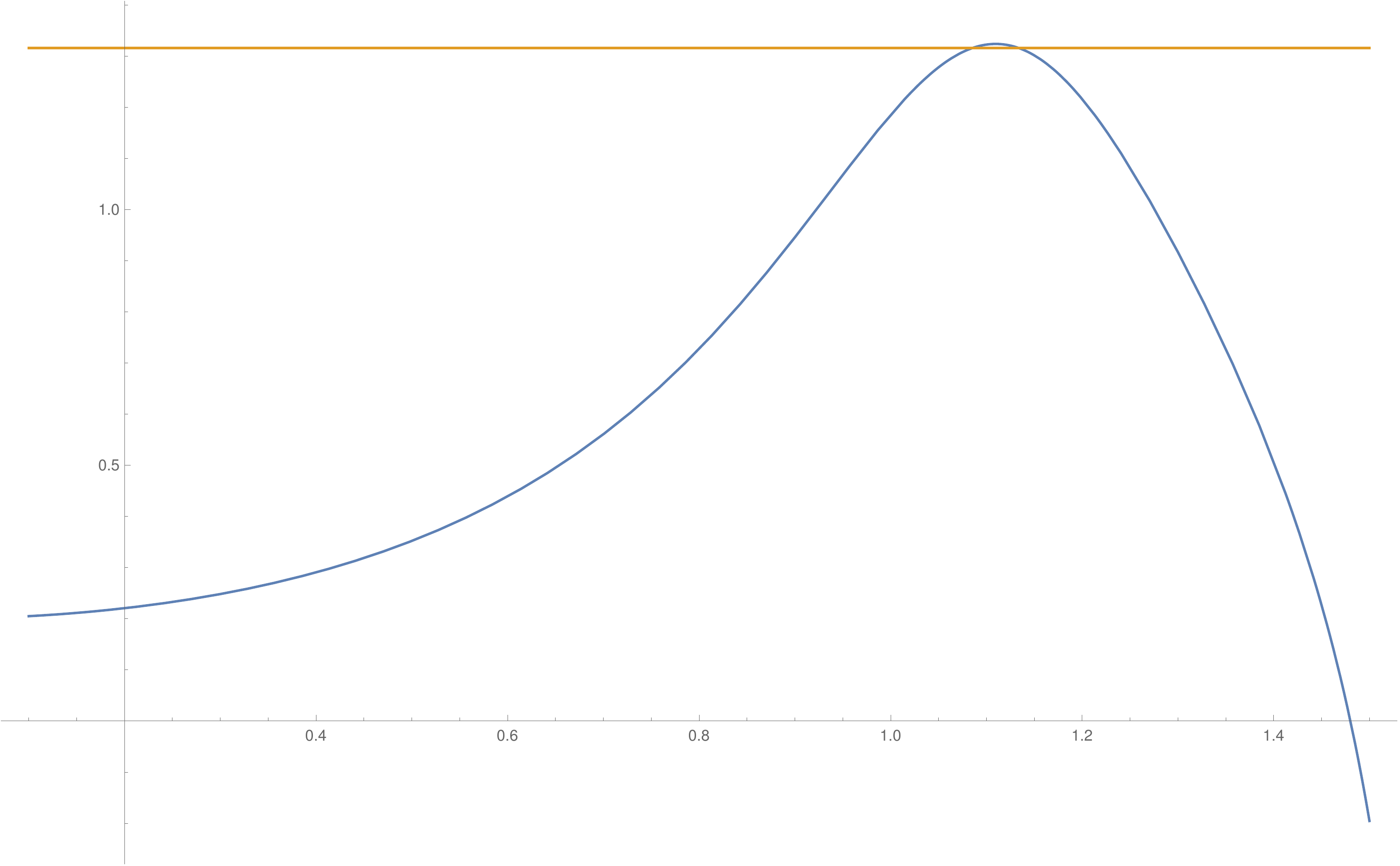}
        \vspace*{8pt}
    \caption{One-spike solution $u$ of problem (\ref{toro}) with $u_\epsilon (\tau_1)> \sigma$. }
\end{figure}
 
Fixed  $T_0 \in (\frac{\pi}{4},\frac{\pi}{2})$, we considered the solution $u_\epsilon$ of problem (\ref{alpha}) 
that reaches its first maximum at $\ti = T_0$ and we have proved that for $\epsilon$ is small enough $u_\epsilon(T_0)>\sigma$.
Next we show that the solution hits the $\ti$-axis shortly after $T_0$.


\begin{lem}\label{unmax}
There exists a constant $\epsilon_0>0$ such that for all $0<\epsilon<\epsilon_0$ there exists $\tau_\epsilon<\frac{\pi}{2}$ with 
the following properties: 
\[u_\epsilon(\tau_\epsilon)=0\] and
\begin{equation}
\left\{
\begin{array}{rcl}
u_\epsilon '(\ti)&>&0   \hspace{0.5cm}  \hbox{for } 0<\ti<T_0\\
u_\epsilon '(\ti)&<&0   \hspace{0.5cm}  \hbox{for }  T_0<\ti<\tau_\epsilon.\\
\end{array}
\right.
\end{equation}
Moreover
\begin{equation}\label{estimate}
|T_0 - \tau_\epsilon|= O(\sqrt{\epsilon})\hspace{0.5cm}\hbox{when } \epsilon \to 0.
\end{equation}
\end{lem}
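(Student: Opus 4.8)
The plan is to exploit the energy function $E_{\al_0}$ of (\ref{energy}) (taken with $\epsilon^2=1/|\lambda|$) associated with $u_\epsilon=u_{\al_0(\epsilon),\epsilon}$, together with the estimate $F(u_0(\epsilon))>A\epsilon$ from Lemma \ref{lemaA}. Recall that $E_{\al_0}'(\ti)=-2\frac{\cos(2\ti)}{\sin(2\ti)}(u_\epsilon'(\ti))^2$, so $E_{\al_0}$ is nondecreasing on $(\pi/4,\pi/2)$, and that $E_{\al_0}(T_0)=\tfrac1{\epsilon^2}F(u_0(\epsilon))>A/\epsilon$ since $u_\epsilon'(T_0)=0$. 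I set $\tau_\epsilon:=\Theta(\al_0(\epsilon),\epsilon)$ as in (\ref{teta-alfa-epsilon}); because $u_\epsilon(T_0)=u_0(\epsilon)>\sigma>0$ we have $T_0<\tau_\epsilon\le\pi/2$, and it is enough to prove that $\tau_\epsilon<\pi/2$, that $u_\epsilon'>0$ on $(0,T_0)$ and $u_\epsilon'<0$ on $(T_0,\tau_\epsilon)$, and that $\tau_\epsilon-T_0=O(\sqrt{\epsilon})$; once $\tau_\epsilon<\pi/2$, $u_\epsilon(\tau_\epsilon)=0$ by the remarks following (\ref{mathcalA}), while $u_\epsilon'>0$ on $(0,T_0)$ holds because $T_0$ is the first critical point of $u_\epsilon$. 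I would carry this out in three steps: (i) $u_\epsilon$ is strictly decreasing on $(T_0,\tau_\epsilon)$; (ii) $u_\epsilon$ falls to the level $\sigma$ at a point $\ti^*_\epsilon$ with $\ti^*_\epsilon-T_0=O(\epsilon)$; (iii) after $u_\epsilon$ has passed below $\sigma$, the energy forces $|u_\epsilon'|$ to be of order $\epsilon^{-1/2}$, so $u_\epsilon$ hits the axis within a further $\ti$-distance $O(\sqrt{\epsilon})$.

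For step (i): since $T_0$ is a local maximum with $u_0(\epsilon)>1$, (\ref{alpha}) gives $u_\epsilon''(T_0)=\tfrac1{\epsilon^2}\bigl(u_0(\epsilon)-u_0(\epsilon)^5\bigr)<0$, so $u_\epsilon'<0$ just after $T_0$; and if $u_\epsilon'$ vanished again at a first point $\ti_2\in(T_0,\tau_\epsilon)$, then $u_\epsilon$ would be strictly decreasing on $(T_0,\ti_2)$, hence $u_\epsilon(\ti_2)<u_0(\epsilon)$, while the monotonicity of $E_{\al_0}$ gives, since $u_\epsilon'(T_0)=u_\epsilon'(\ti_2)=0$, that $F(u_\epsilon(\ti_2))\ge F(u_0(\epsilon))>0$; this forces $u_\epsilon(\ti_2)>\sigma$ (as $F\le0$ on $(0,\sigma]$), and then the strict monotonicity of $F$ on $(\sigma,\infty)$ contradicts $u_\epsilon(\ti_2)<u_0(\epsilon)$. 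So $u_\epsilon'<0$ on all of $(T_0,\tau_\epsilon)$.

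I expect step (ii) to be the main obstacle: on the initial stretch where $u_\epsilon>\sigma$ one has $F(u_\epsilon)>0$, so the identity $\tfrac12(u_\epsilon')^2=E_{\al_0}-\tfrac1{\epsilon^2}F(u_\epsilon)$ gives no usable lower bound for $|u_\epsilon'|$, and the ``large slope'' argument of step (iii) cannot start there. To cross this stretch I would use convexity: on $\{\ti\in(T_0,\tau_\epsilon):u_\epsilon(\ti)\ge\sigma\}$, using (\ref{alpha}), $\frac{\cos(2\ti)}{\sin(2\ti)}<0$ on $(\pi/4,\pi/2)$, $u_\epsilon'<0$ from step (i), and the fact that $s\mapsto s-s^5$ decreases for $s>1$,
\[
u_\epsilon''=-2\tfrac{\cos(2\ti)}{\sin(2\ti)}\,u_\epsilon'+\tfrac1{\epsilon^2}\bigl(u_\epsilon-u_\epsilon^5\bigr)<\tfrac1{\epsilon^2}\bigl(\sigma-\sigma^5\bigr)=-\tfrac{2\sigma}{\epsilon^2},
\]
so integrating twice from $T_0$ (where $u_\epsilon'=0$) yields $u_\epsilon(\ti)<u_0(\epsilon)-\tfrac{\sigma}{\epsilon^2}(\ti-T_0)^2$ while $u_\epsilon\ge\sigma$; hence $u_\epsilon$ reaches $\sigma$ at some $\ti^*_\epsilon$ with $\ti^*_\epsilon-T_0\le\epsilon\sqrt{(u_0(\epsilon)-\sigma)/\sigma}$. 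Since the solutions are uniformly bounded (as in the proof of Lemma \ref{lemaZ}; in fact $u_0(\epsilon)\to\sigma$), this is $O(\epsilon)$, with $\ti^*_\epsilon<\tau_\epsilon$ and $\ti^*_\epsilon<\pi/2$ for $\epsilon$ small. Alternatively this can be read from the rescaled convergence $u_\epsilon(T_0+\epsilon s)\to Z_0(s)$ of Lemma \ref{lemaZ}, $Z_0$ being strictly decreasing on $(0,\infty)$ with $Z_0(0)=\sigma$.

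For step (iii): by step (i), $u_\epsilon$ decreases on $(\ti^*_\epsilon,\tau_\epsilon)$, so $0<u_\epsilon<\sigma$ and $F(u_\epsilon)\le0$ there, whence $\tfrac12(u_\epsilon')^2=E_{\al_0}(\ti)-\tfrac1{\epsilon^2}F(u_\epsilon)\ge E_{\al_0}(\ti)\ge E_{\al_0}(T_0)>A/\epsilon$ (monotonicity of $E_{\al_0}$, valid since $\ti>\ti^*_\epsilon>T_0>\pi/4$); thus $u_\epsilon'<-\sqrt{2A/\epsilon}$ on $(\ti^*_\epsilon,\tau_\epsilon)$, so $u_\epsilon(\ti)<\sigma-\sqrt{2A/\epsilon}\,(\ti-\ti^*_\epsilon)$, which vanishes at $\ti^*_\epsilon+\sigma\sqrt{\epsilon/(2A)}<\pi/2$ for $\epsilon$ small (as $\ti^*_\epsilon\to T_0<\pi/2$). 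Hence $\tau_\epsilon\le\ti^*_\epsilon+\sigma\sqrt{\epsilon/(2A)}<\pi/2$, giving $u_\epsilon(\tau_\epsilon)=0$; the sign conditions on $u_\epsilon'$ hold by construction of $u_\epsilon$ and by step (i); and $|\tau_\epsilon-T_0|\le(\tau_\epsilon-\ti^*_\epsilon)+(\ti^*_\epsilon-T_0)=O(\sqrt{\epsilon})+O(\epsilon)=O(\sqrt{\epsilon})$, which is (\ref{estimate}).
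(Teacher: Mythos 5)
Your proposal is correct and follows essentially the same route as the paper: the second-derivative bound $u_\epsilon''<-C/\epsilon^2$ while $u_\epsilon>\sigma$ gives the $O(\epsilon)$ descent to the level $\sigma$, and the monotonicity of the energy $E_{\al_0}$ on $(\pi/4,\pi/2)$ combined with Lemma \ref{lemaA} gives $|u_\epsilon'|>\sqrt{A/\epsilon}$ below $\sigma$, hence the $O(\sqrt{\epsilon})$ estimate. Your step (i) merely makes explicit a monotonicity fact that the paper absorbs into the definition of $\tau_\epsilon$ and its quantitative bounds on $u_\epsilon'$.
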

\begin{proof}
Recall that $u_\epsilon$ has the following properties at $T_0$:
\[
u_\epsilon(T_0)>\sigma  \hspace{0.5cm}\hbox{and } \hspace{0.5cm} u_\epsilon '(T_0)=0.
\]
From Eq. (\ref{alpha}) it is easy to see that there is a constant $C>0$ such that
\begin{equation}\label{u''}
 u_\epsilon ''(\ti)<-\frac{C}{\epsilon^2}
\end{equation}
for all $\ti>T_0$ while $u_\epsilon (\ti)>\sigma$. Integration of (\ref{u''}) over $(T_0, \ti)$ yields
\[
 u_\epsilon '(\ti)<-\frac{C}{\epsilon^{2}}(\ti-T_0).
\]
We know that $|u_0(\ep)|<M$ for all $\epsilon$ small and for some $M>0$.
Then
\begin{equation}\label{ti-T0}
  u_\epsilon(\ti)-u_0(\ep)<-\frac{C}{2\epsilon^2}(\ti-T_0)^2.
\end{equation}
Since $u_0(\ep)>\sigma$ and $u$ is decreasing while $\ti>T_0$ and $u_\epsilon(\ti)>1$, 
there exists $\tau_\sigma>T_0$ such that 
\[
u_\epsilon(\tau_\sigma)=\sigma  \hspace{0.5cm} \hbox{ and }  \hspace{0.5cm} u_\epsilon '(\tau_\sigma)< 0.
\]
Taking $\ti=\tau_\sigma$ in (\ref{ti-T0}) it follows that 
\[
\sigma-u_0(\ep)<-\frac{C}{2\epsilon^2}(\tau_\sigma-T_0)^2
.\]
Finally we have
\begin{equation}\label{ineq1}
 |\tau_\sigma - T_0|=O(\epsilon).
\end{equation}

Next we use the energy function associated with $u_\epsilon$ defined in (\ref{energy}).
If $\ti \in  (\frac{\pi}{4},\frac{\pi}{2})$, then $E_\epsilon  '$ satisfies
\[E_\epsilon  '(\ti)=-2 \frac{\cos(2\ti)}{\sin(2\ti)}u_\epsilon '(\ti)^2 > 0.\]
Consequently integration of $E_\epsilon  '(\ti)$ over $(T_0,\tau_\sigma)$ yields
\[
0<\frac{u_\epsilon '(\tau_\sigma)^2}{2}-\frac{1}{\epsilon^2} F(u_0(\epsilon)).
\]
Therefore from Lemma \ref{lemaA} it follows:
\begin{equation}\label{fracA}
u_\epsilon '(\tau_\sigma)^2>\frac{2}{\epsilon^2} F(u_0(\epsilon))>\frac{A}{\epsilon}
\end{equation}
Define
\[ 
\tau_\epsilon= \sup \{T_0<\ti<\frac{\pi}{2} : u_\epsilon >0 \hbox{ and }u_\epsilon ' <0 \hbox{ on } (T_0 , \ti) \}.
 \] 
and integrate $E_\epsilon '$ over $(\tau_\sigma, \ti)$ with $\tau_\sigma<\ti<\tau_\epsilon$. Then
\begin{equation}\label{fracsqrtA}
\frac{u_\epsilon '(\ti)^2}{2} + \frac{1}{\epsilon^2} F(u_\epsilon (\ti)) > \frac{u_\epsilon  '(\tau_\sigma)^2}{2}.
\end{equation}
Since $F(u_\epsilon (\ti))<0$ and $u_\epsilon '(\tau_\sigma)<0$, it follows from (\ref{fracA}) and (\ref{fracsqrtA}) that
\[
 u_\epsilon  '(\ti)< u_\epsilon  '(\tau_\sigma)<-\sqrt{\frac{A}{\epsilon}}   \hspace{0.5cm}  \hbox{for } \tau_\sigma<\ti<\tau_\epsilon,
\]
Now we have:
\begin{equation}\label{ineq2}
 |\tau_\epsilon -\tau_\sigma|= O(\sqrt{\epsilon}).
\end{equation}
Write
\begin{equation}\label{triang}
|\tau_\epsilon - T_0| =  |\tau_\epsilon -\tau_\sigma| + |\tau_\sigma - T_0|. 
\end{equation}
Putting the estimates (\ref{ineq1})-(\ref{ineq2}) into (\ref{triang}) we obtain the estimate (\ref{estimate}).


\end{proof}
\hfill$\square$
\par
\noindent
This result allows us to establish the following
\begin{Propos}\label{prop1}
For $\epsilon$ small enough there exists $\al_0 \in \mathcal{A}(\epsilon)$  such that the solution  $u_{\al_0, \epsilon}$ 
of problem (\ref{alpha}) with initial value $\al_0(\epsilon)$ has exactly one spike.
\end{Propos}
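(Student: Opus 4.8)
The plan is to take $\al_0=\al_0(\epsilon)$ to be precisely the initial value already singled out above: for the fixed $T_0\in(\pi/4,\pi/2)$, the Sturm comparison argument together with Lemma \ref{lim} produces, for $\epsilon$ small enough, an $\al_0(\epsilon)\in(0,1)$ for which the solution $u_\epsilon=u_{\al_0(\epsilon),\epsilon}$ of (\ref{alpha}) has its first critical point at $\ti=T_0$; moreover by (\ref{previus}) we have $\al_0(\epsilon)\le e^{-\beta/\epsilon}$, so indeed $\al_0\in(0,1)$. With this choice nothing further needs to be constructed: the proposition will follow simply by assembling Lemmas \ref{lemaA} and \ref{unmax}.

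First I would recall, from the discussion preceding Lemma \ref{lemaA}, that $u_0(\epsilon)=u_\epsilon(T_0)>1$, and that Lemma \ref{lemaA} gives $F(u_0(\epsilon))>A\epsilon>0$ for $\epsilon<\epsilon_0$; since $F$ is increasing on $(1,\infty)$ with unique positive zero $\sigma=3^{1/4}$, this forces $u_\epsilon(T_0)>\sigma$. Feeding this into Lemma \ref{unmax} yields, for $\epsilon$ below some $\epsilon_0$, a point $\tau_\epsilon<\pi/2$ with $u_\epsilon(\tau_\epsilon)=0$, with $u_\epsilon'>0$ on $(0,T_0)$ and $u_\epsilon'<0$ on $(T_0,\tau_\epsilon)$ (the estimate (\ref{estimate}) is not needed here, only $\tau_\epsilon<\pi/2$).

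It then remains to read off the two conclusions. Since $u_\epsilon>0$ on $(0,\tau_\epsilon)$ and $u_\epsilon(\tau_\epsilon)=0$, the definition (\ref{teta-alfa-epsilon}) gives $\Theta(\al_0,\epsilon)=\tau_\epsilon$, and as $0<\tau_\epsilon<\pi/2$ the definition (\ref{mathcalA}) shows $\al_0\in\mathcal{A}(\epsilon)$. Finally, the monotonicity statement in Lemma \ref{unmax} says that on the interval of positivity $(0,\Theta(\al_0,\epsilon))=(0,\tau_\epsilon)$ the only critical point of $u_\epsilon$ is $T_0$, which is a local maximum; hence $u_\epsilon$ has exactly one spike. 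The substantive work — the energy estimates behind Lemmas \ref{lemaA} and \ref{unmax} — having already been carried out, I do not expect any real obstacle at this stage; the only point needing care is taking $\epsilon$ small enough to lie below all the finitely many thresholds produced in those lemmas.
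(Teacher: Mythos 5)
Your proposal is correct and is essentially the paper's own argument: the paper presents Proposition \ref{prop1} as an immediate consequence of the construction of $\al_0(\epsilon)$ with first maximum at $T_0$, Lemma \ref{lemaA} (forcing $u_0(\epsilon)>\sigma$), and Lemma \ref{unmax} (producing the zero $\tau_\epsilon<\pi/2$ and the monotonicity on either side of $T_0$). You have simply made explicit the assembly that the paper leaves implicit, and no step is missing.
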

\par
\noindent
Let $\mathcal{A}_1(\epsilon)$ the connected components of $\mathcal{A}(\epsilon)$  
such that the solutions $u_{\alpha , \epsilon}$ with $\alpha \in \mathcal{A}_1(\epsilon)$ have exactly
one spike. By the previous proposition for $\epsilon$ small enough $\mathcal{A}_1(\epsilon)$ is not
empty.

\begin{Propos}\label{prop2}  
Let $(\al_1^{-}, \al_1^{+}) \subset(0,1)$ be any connected component of $\mathcal{A}_1(\epsilon)$ and let
$\Ti(\al, \epsilon)$ be as in (\ref{teta-alfa-epsilon}). Then
\[\lim_{\al \to \al_1^{\pm}} \Ti(\al, \epsilon)= \frac{\pi}{2}.\]
\end{Propos}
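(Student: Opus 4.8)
The plan is to reduce the statement to one topological observation --- that a connected component of $\mathcal{A}_1(\epsilon)$ is in fact a connected component of the \emph{open} set $\mathcal{A}(\epsilon)$ --- combined with the continuous dependence of $u_{\al,\epsilon}$ on the initial value $\al$ that was already exploited in Section 2. Throughout we may assume $0<\al_1^-<\al_1^+<1$: if $\al_1^+=1$, then $u_{\al,\epsilon}\to u_{1,\epsilon}\equiv 1$ uniformly on compact subsets of $[0,\pi/2)$ as $\al\uparrow 1$, and since $u\equiv1$ is positive on $(0,\pi/2)$ this forces $\Ti(\al,\epsilon)\to\pi/2$ at once; and by the reasoning in the proof of Lemma \ref{cercadecero}, solutions with $\al$ near $0$ stay positive on $(0,\pi/2)$, so $\mathcal{A}(\epsilon)$ is bounded away from $0$ and hence $\al_1^->0$.

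By its very definition, $\mathcal{A}_1(\epsilon)$ is a union of connected components of $\mathcal{A}(\epsilon)$, so the interval $(\al_1^-,\al_1^+)$ is itself a connected component of $\mathcal{A}(\epsilon)$, i.e.\ a maximal open subinterval of the open set $\mathcal{A}(\epsilon)\subset(0,1)$; in particular $\al_1^{\pm}\notin\mathcal{A}(\epsilon)$. Comparing with the definition (\ref{mathcalA}) of $\mathcal{A}(\epsilon)$ and with the lower bound $\Ti(\cdot,\epsilon)>\pi/4$ supplied by Theorem \ref{nonexis}, this forces $\Ti(\al_1^{\pm},\epsilon)=\pi/2$; equivalently, $u_{\al_1^+,\epsilon}$ and $u_{\al_1^-,\epsilon}$ are strictly positive on all of $(0,\pi/2)$.

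Now suppose, towards a contradiction, that $\Ti(\al,\epsilon)$ does not converge to $\pi/2$ as $\al\uparrow\al_1^+$. Since $\Ti(\al,\epsilon)\le\pi/2$ for every $\al$, there exist a value $\Ti^*\in[\pi/4,\pi/2)$ and a sequence $\al_j\uparrow\al_1^+$ with $\al_j\in(\al_1^-,\al_1^+)$ and $\Ti(\al_j,\epsilon)\to\Ti^*$. By continuous dependence of the solution of (\ref{alpha}) on $\al$ --- the same uniform continuity used in Lemmas \ref{lim} and \ref{alfaestr} --- the functions $u_{\al_j,\epsilon}$ converge to $u_{\al_1^+,\epsilon}$ uniformly on a fixed closed subinterval of $(0,\pi/2)$ that contains $\Ti^*$ in its interior and contains $\Ti(\al_j,\epsilon)$ for all large $j$. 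Letting $j\to\infty$ in the identity $u_{\al_j,\epsilon}\bigl(\Ti(\al_j,\epsilon)\bigr)=0$ then gives $u_{\al_1^+,\epsilon}(\Ti^*)=0$ with $\Ti^*\in(0,\pi/2)$, contradicting the strict positivity of $u_{\al_1^+,\epsilon}$ on $(0,\pi/2)$. Hence $\lim_{\al\uparrow\al_1^+}\Ti(\al,\epsilon)=\pi/2$, and the verbatim argument applied to a sequence $\al_j\downarrow\al_1^-$ yields $\lim_{\al\downarrow\al_1^-}\Ti(\al,\epsilon)=\pi/2$.

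The one step requiring any care is the continuous-dependence passage near $\Ti^*$: although the coefficient $\tfrac{\cos 2\ti}{\sin 2\ti}$ in (\ref{alpha}) is singular at $\ti=0$ and at $\ti=\pi/2$, the point $\Ti^*\ge\pi/4$ lies strictly between these singularities, the equation is regular on every compact subinterval of $(0,\pi/2)$, and the continuity of $\al\mapsto u_{\al,\epsilon}$ up to the regular singular point $\ti=0$ is exactly the property relied upon repeatedly in Section 2. I do not expect a serious obstacle here: once $(\al_1^-,\al_1^+)$ is identified with a connected component of $\mathcal{A}(\epsilon)$, what remains is the elementary topology of open subsets of $\mathbb{R}$ together with a routine limiting argument.
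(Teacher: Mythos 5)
Your proof is correct and follows essentially the same route as the paper's: a contradiction argument in which a sequence $\al_j\to\al_1^{\pm}$ with $\Ti(\al_j,\epsilon)\to\Ti^*<\pi/2$ forces, by continuous dependence on the initial value, a zero of $u_{\al_1^{\pm},\epsilon}$ in $(0,\pi/2)$, contradicting the fact that the endpoints of a connected component do not belong to $\mathcal{A}(\epsilon)$. The extra care you take with the endpoint cases $\al_1^+=1$, $\al_1^-=0$ and with the regularity of the coefficient away from $\ti=0,\pi/2$ only adds detail to the paper's three-sentence version of the same argument.
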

\begin{proof}
Suppose that the assertion of Proposition \ref{prop2} is not true, so that
there exists a sequence $\{ \al_n \}$ which converges to, say $\al_1^{-}$, 
such that $\Ti(\al_n, \epsilon)$ converges to a point $\ti_\infty<\frac{\pi}{2}$. 
Then, by continuity $\Ti(\al_1^{-}, \ep)=\ti_\infty$ and therefore $\al_1^{-} \in \mathcal{A}(\epsilon)$, 
which contradicts the definition of $\al_1^{-}$.

\end{proof}
\hfill$\square$
\par
\noindent
This Proposition enables us to define:
\[
\Ti_{min,\epsilon}^1= \min \{ \Ti(\al, \epsilon): \al \in  \mathcal{A}_1(\epsilon)  \}.
\]
\begin{Propos}\label{prop3}
\begin{equation}\label{Ti_min}
\lim_{\epsilon \to 0} \Ti_{min,\epsilon}^1 =\frac{\pi}{4}.
\end{equation}
\end{Propos}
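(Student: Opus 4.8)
\textbf{Proof proposal for Proposition \ref{prop3}.}

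The plan is to establish the limit by proving two inequalities: $\liminf_{\epsilon\to 0}\Ti_{min,\epsilon}^1\geq \pi/4$ and $\limsup_{\epsilon\to 0}\Ti_{min,\epsilon}^1\leq \pi/4$. The first is immediate: Theorem \ref{nonexis} guarantees that every solution of problem (\ref{toro}) with initial value in $(0,1)$ satisfies $\Theta(\al,\epsilon)>\pi/4$, hence $\Ti_{min,\epsilon}^1\geq \pi/4$ for every $\epsilon$, and in fact one should check the bound is strict but the weak inequality suffices for the $\liminf$. So the real content is the upper bound: for $\epsilon$ small there is a one-spike solution whose vanishing point is arbitrarily close to $\pi/4$.

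For the upper bound I would argue by contradiction. Suppose there is a sequence $\epsilon_n\to 0$ and $\rho>0$ with $\Ti_{min,\epsilon_n}^1\geq \pi/4+\rho$ for all $n$. Pick $T_0\in(\pi/4,\pi/4+\rho/2)$, fixed independently of $n$; this is legitimate because the whole construction preceding Lemma \ref{unmax} only requires $T_0\in(\pi/4,\pi/2)$. For each $\epsilon_n$ small enough, the construction produces a solution $u_{\epsilon_n}$ of problem (\ref{alpha}) reaching its first maximum at $T_0$, with $u_{\epsilon_n}(T_0)>\sigma$ by Lemma \ref{lemaA}; by Lemma \ref{unmax} this solution has exactly one spike and vanishes at some $\tau_{\epsilon_n}$ with $|\tau_{\epsilon_n}-T_0|=O(\sqrt{\epsilon_n})$. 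Hence $\al_0(\epsilon_n)\in\mathcal{A}_1(\epsilon_n)$ and $\Ti(\al_0(\epsilon_n),\epsilon_n)=\tau_{\epsilon_n}$. But then
\[
\Ti_{min,\epsilon_n}^1\leq \tau_{\epsilon_n}\leq T_0+O(\sqrt{\epsilon_n})<\tfrac{\pi}{4}+\tfrac{\rho}{2}+O(\sqrt{\epsilon_n}),
\]
which for $n$ large contradicts $\Ti_{min,\epsilon_n}^1\geq \pi/4+\rho$. This proves $\limsup_{\epsilon\to 0}\Ti_{min,\epsilon}^1\leq \pi/4$, and since $T_0$ can be taken arbitrarily close to $\pi/4$ the bound is as sharp as needed.

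Combining the two inequalities gives (\ref{Ti_min}). The step I expect to require the most care is checking that the one-spike solution $u_{\epsilon_n}$ built for a fixed $T_0$ genuinely lies in the component $\mathcal{A}_1(\epsilon_n)$ used to define $\Ti_{min,\epsilon_n}^1$ — that is, that "exactly one spike" in Lemma \ref{unmax} matches the definition of $\mathcal{A}_1(\epsilon)$ — and that all the smallness thresholds $\epsilon_0$ coming from Lemmas \ref{lemaA} and \ref{unmax} can be chosen uniformly, which they can since $T_0$ is fixed once $\rho$ is fixed. The rest is bookkeeping with the estimate $|\tau_\epsilon-T_0|=O(\sqrt\epsilon)$ already in hand.
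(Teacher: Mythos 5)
Your proposal is correct and follows essentially the same route as the paper: the lower bound comes from Theorem \ref{nonexis}, and the upper bound from choosing $T_0$ arbitrarily close to $\pi/4$ in the construction of Proposition \ref{prop1} together with the estimate $|\tau_\epsilon - T_0| = O(\sqrt{\epsilon})$ of Lemma \ref{unmax}. Your write-up merely makes explicit (via the $\liminf$/$\limsup$ split and the contradiction argument) what the paper states in three sentences.
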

\begin{proof}
In order to prove Proposition \ref{prop1} we introduced an arbitrary
point $T_0>\frac{\pi}{4}$. We may choose this point arbitrarily close to $\frac{\pi}{4}$. 
In Lemma \ref{unmax} it has been shown that by choosing $\epsilon$ small enough,
we can achieve that $\tau_\epsilon$ is arbitrary close to $T_0$. Then we have (\ref{Ti_min}).
\end{proof}
\hfill$\square$
\par
\noindent

It follows from Proposition \ref{prop3} that, given $\ti_1\in (\frac{\pi}{4},\frac{\pi}{2})$, there exists $\epsilon_1>0$ such that 
if $\epsilon<\epsilon_1$, \[\frac{\pi}{4} <\Ti_{min,\epsilon}^1<\ti_1.\]
\par
\noindent

Let $\Gamma_1(\epsilon)=\{ (\al, \Ti(\al, \epsilon)): \al  \in (\al_1^{-},\al_1^{+})\} $, where $ (\al_1^{-},\al_1^{+})$
is a connected component of  $\mathcal{A}_1(\epsilon)$ such that 
$\min \{ \Ti(\al, \epsilon): \al \in  ( \al_1^{-},\al_1^{+})   \} < \theta_1 $.
Hence $\Gamma_1(\epsilon)$ intersects the line $\ti=\ti_1$ at least twice for all $\epsilon<\epsilon_1$. 
This yields at least two  $\al_1(\epsilon), \al_2(\epsilon) \in \mathcal{A}_1(\epsilon)$ such that 
$u_{\al_1(\epsilon)}, u_{ \al_2(\epsilon)}$ are solutions of problem (\ref{alpha}) 
having exactly one spike, and this completes the proof of Theorem \ref{ksoluciones} for the case $k=1$. In others words, we have proved that
for $\epsilon$ small enough there are at least two solutions with a single spike.

\par \vspace{0.5cm}
Now we prove Theorem \ref{ksoluciones} for $k=2$ in a similar way. 
We shall prove that
given any $\theta_1 > \pi/4$ there exists $\epsilon_2>0$ such that if $\epsilon<\epsilon_2$,then  problem (\ref{alpha}) 
has at least two solutions with initial value on $(0,1)$ that have exactly two spikes.

\begin{figure}[th]
    \centering
    \includegraphics[width=0.5\textwidth]{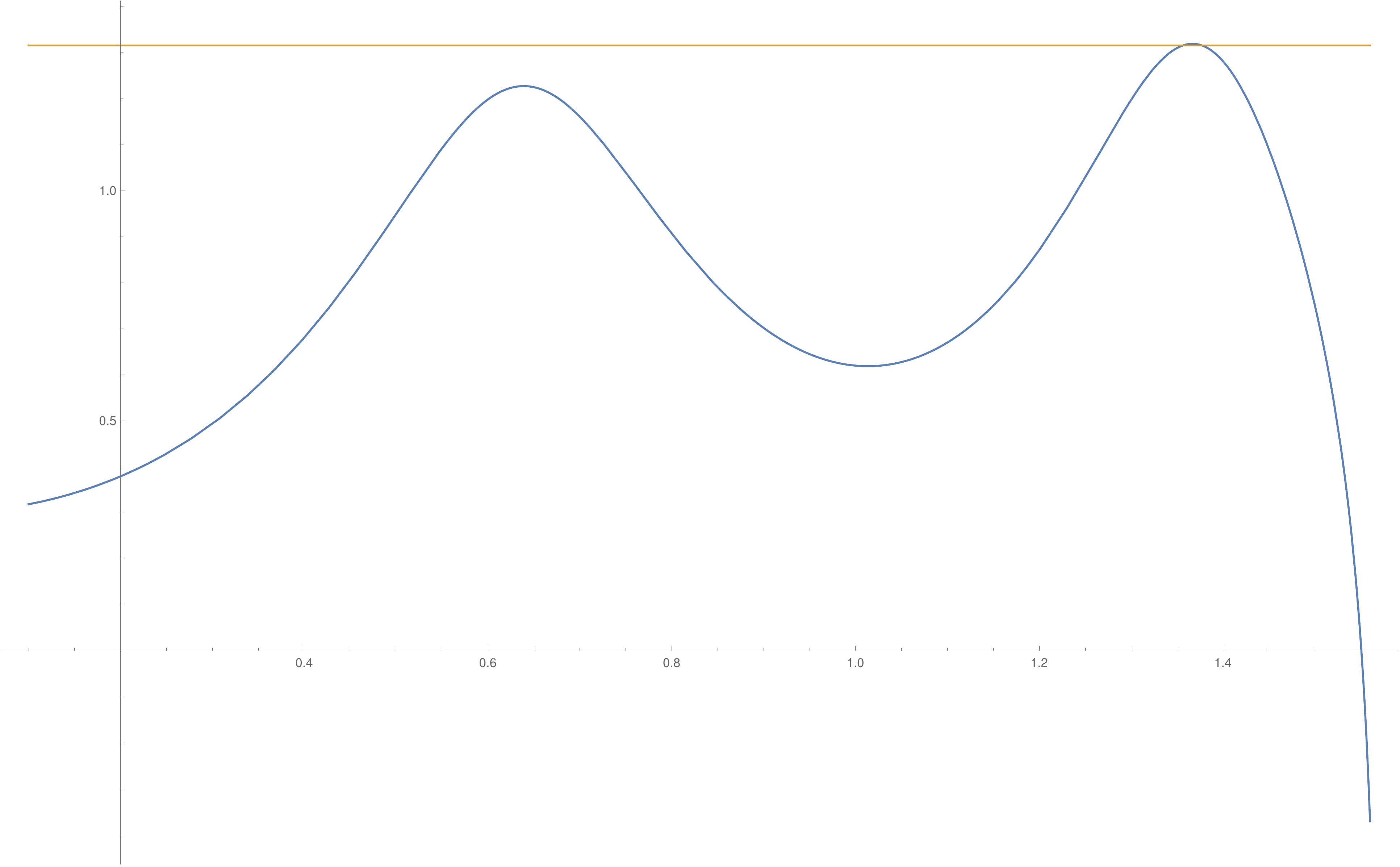}
        \vspace*{8pt}
    \caption{Two-spike solution $u$ of problem (\ref{toro}) with $u_0(\epsilon)\geq \sigma$}
\end{figure}

Repeating the argument we fix $T_0 \in (\frac{\pi}{4},\theta_1)$. For $\epsilon$ 
small enough we find an initial value
$\al_0 \in (0,1) $ such that
\[  
\tau_3(\al_0)=T_0. 
\] 
Write $\al_0=\al_0(\epsilon)$; $u_\epsilon(\ti)=u_{\al_0(\epsilon)}(\ti)$; 
$  u_0(\epsilon)=u_\epsilon(T_0)$ and $\tau_k(\al_0(\epsilon))=\tau_k(\epsilon)$.
Then 
\begin{equation}\label{taus}
\tau_1(\epsilon)<\tau_2(\epsilon)<\tau_3(\epsilon)=T_0.
\end{equation}
We have the following results:

\begin{lem}\label{tau+}
\begin{equation}\label{limsup}
 \limsup_{\epsilon\to 0}\tau_1(\epsilon)\leq \frac{\pi}{4}.
\end{equation}
\end{lem}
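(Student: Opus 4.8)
The plan is to proceed by contradiction, mirroring the strategy of Lemma \ref{unmax} but now applied to the \emph{first} spike of the two-spike solution $u_\epsilon$. Suppose (\ref{limsup}) fails: then along a sequence $\epsilon_n\to 0$ we have $\tau_1(\epsilon_n)\to \tau_\infty$ with $\tau_\infty > \pi/4$. Pick any $\tau_* \in (\pi/4, \tau_\infty)$; for $n$ large, $\tau_1(\epsilon_n) > \tau_*$, so on the interval $(0,\tau_*)$ the solution $u_\epsilon$ is still strictly increasing and has not yet reached its first maximum. The aim is to show this is incompatible with the energy balance already used in Section 4: on $(0,\pi/4)$ the damping coefficient $\cos(2\ti)/\sin(2\ti)$ is positive, so $E_\epsilon$ is decreasing there, hence $F(u_\epsilon(\ti)) \le F(\al_0) < 0$ for $\ti \le \pi/4$, forcing $u_\epsilon(\ti) < \sigma$; meanwhile on $(\pi/4,\tau_*)$ the damping flips sign and $E_\epsilon$ increases.

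The key steps, in order, are as follows. First, observe that since $u_\epsilon$ is increasing on $(0,\pi/4)$ and (as in Lemma \ref{u(t_1)}, applied on an interval around $\pi/4$ where $u_\epsilon < 1/2$) we get $u_\epsilon(\pi/4) \le e^{-\beta/\epsilon}$, exactly as in (\ref{previus}); consequently $\al_0(\epsilon)\to 0$ and $F(\al_0(\epsilon)) \to 0$ as well. Second, rescale around the candidate first maximum: if $\tau_1(\epsilon)$ stayed bounded away from $\pi/4$, one would set $\ti = \tau_1(\epsilon) + \epsilon s$ and $z_\epsilon(s) = u_\epsilon(\ti)$, which solves an equation of the form (\ref{zeta}) with $T_0$ replaced by (a subsequential limit of) $\tau_1(\epsilon)$; but here $z_\epsilon$ is increasing on $(\psi(\epsilon),0)$ with $\psi(\epsilon)\to -\infty$ since the spike is separated from $\pi/4$ and from the previous behaviour. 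Third, invoke Lemma \ref{lemaZ}: this forces $u_\epsilon(\tau_1(\epsilon)) = z_\epsilon(0) = u_0\text{-analogue}(\epsilon) \to 3^{1/4} = \sigma$. Fourth, compare with the energy bound: since $\tau_1(\epsilon) < \pi/4$ is exactly what we want to prove and we are assuming it fails, we instead get $\tau_1(\epsilon) > \pi/4$ for large $n$, so we run the energy argument to $\pi/4$: $E_\epsilon(\pi/4) \le E_\epsilon(0) = \epsilon^{-2}F(\al_0)$, and then on $(\pi/4,\tau_1(\epsilon))$ the energy increases, so $\epsilon^{-2}F(u_\epsilon(\tau_1(\epsilon))) = E_\epsilon(\tau_1(\epsilon)) \ge E_\epsilon(\pi/4)$; combining, $F(u_\epsilon(\tau_1(\epsilon))) \ge F(\al_0) + (\text{the analogue of } J_2) $, and the analogue of Lemma \ref{lemaA} gives $F(u_\epsilon(\tau_1(\epsilon))) > A'\epsilon$ for some $A' > 0$. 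But $F(u_\epsilon(\tau_1(\epsilon))) \to F(\sigma) = 0$ by Lemma \ref{lemaZ}, and $F(\sigma)=0$ with $F$ increasing near $\sigma$ forces $u_\epsilon(\tau_1(\epsilon)) > \sigma$; then the argument of Lemma \ref{unmax} (the estimate (\ref{ti-T0})–(\ref{ineq1})) shows $u_\epsilon$ drops below $\sigma$ again within $O(\epsilon)$ of $\tau_1(\epsilon)$, hence $\tau_2(\epsilon) - \tau_1(\epsilon) = O(\sqrt\epsilon)$ and the second maximum occurs essentially at the same place — but $\tau_2$ is a minimum of $u_\epsilon$ (value $< 1$), contradicting $u_\epsilon > \sigma > 1$ on a neighbourhood. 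This contradiction establishes $\tau_1(\epsilon) \le \pi/4 + o(1)$, i.e. (\ref{limsup}).

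The main obstacle I expect is bookkeeping the rescaling consistently: unlike the $k=1$ case, here $\tau_1(\epsilon)$ is not a prescribed point $T_0$ but is determined implicitly by the condition $\tau_3(\al_0)=T_0$, so one must first argue that $\tau_1(\epsilon)$ stays in a compact subinterval of $(0,\pi/2)$ (boundedness is clear, but separation from $0$ needs Theorem \ref{nonexis} or the slow-growth observation near $\al_0\to 0$) before the Arzelà–Ascoli/Lemma \ref{lemaZ} machinery applies. A secondary subtlety is justifying that the interval of increase $(\psi(\epsilon),0)$ in the rescaled picture has $\psi(\epsilon)\to-\infty$: this follows because $u_\epsilon$ is monotone increasing on all of $(0,\tau_1(\epsilon))$ and, by the contradiction hypothesis, $\tau_1(\epsilon) - \pi/4$ stays bounded below, while $u_\epsilon$ near $\pi/4$ is exponentially small, so in rescaled variables the "left end" recedes to $-\infty$. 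Once these two points are in place, the energy comparison and the decay Lemma \ref{lemaZ} close the argument exactly as in the one-spike case.
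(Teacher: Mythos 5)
Your proposal is correct and follows essentially the same route as the paper: argue by contradiction and rerun the one‑spike machinery (the rescaling/Lemma \ref{lemaZ} convergence, the energy decomposition of Lemma \ref{lemaA}, and the crash estimate of Lemma \ref{unmax}) with $T_0$ replaced by $\tau_+=\limsup\tau_1(\epsilon)>\pi/4$, concluding that $u_\epsilon(\tau_1)>\sigma$ and hence that $u_\epsilon$ must vanish within $O(\sqrt{\epsilon})$ to the right of $\tau_1$, which is incompatible with the prescribed maximum above $u=1$ at $T_0$. The only cosmetic point is that the final contradiction is cleanest when stated as in Lemma \ref{unmax} — the derivative stays below $-\sqrt{A/\epsilon}$ until $u_\epsilon$ reaches zero, so no minimum $\tau_2$ and no later maximum at $T_0$ can exist — rather than via the location of $\tau_2$.
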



\begin{proof}
Let
$\tau_+=\limsup_{\epsilon\to 0}\tau_1(\epsilon)$.
  Note that $\tau_+\leq T_0$ and suppose that $\frac{\pi}{4} < \tau_+ \leq T_0 .$
Then, repeating the previous argument with $T_0$ replaced by $\tau_+$, we find that for $\epsilon$ small enough, the solution 
$u_\epsilon$ has a zero $\tau_\epsilon$ in a right neighbourhood of $\tau_+$ and is strictly decreasing on $(\tau_+, \tau_\epsilon)$. 
Since, by construction, $u_\epsilon$ has a local maximum at $T_0$ for every $\epsilon>0$, which lies above the
line $u = 1$, this is not possible. This completes the proof.
\end{proof}
\hfill$\square$


\begin{lem}\label{immediately}
Let $u_\epsilon$ be a $2$-spike solution of (\ref{alpha}) with $\tau_2(\epsilon)$ the second critical point. 
Then there are constants $\beta>0$ and $\epsilon_1>0$ such that
\[ u_{\epsilon}(\tau_2(\epsilon)) \leq e^{-\frac{\beta}{\epsilon}}  \hbox{  for  } \epsilon <\epsilon_1.\]
\end{lem}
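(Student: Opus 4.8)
The idea is to mimic the argument already used in Lemma \ref{u(t_1)}: a $2$-spike solution has, near its second critical point $\tau_2(\epsilon)$, a local minimum lying below the line $u=1$, and the barrier/minimum-principle machinery will force that minimum value down to size $e^{-\beta/\epsilon}$. First I would record that for a $2$-spike solution the second critical point $\tau_2(\epsilon)$ is a local minimum, with $u_\epsilon(\tau_2(\epsilon))<1$; this follows from the Remark after Lemma \ref{lim}, since $u_{\al,\lambda}(\tau_j)<1$ for $j$ even. Next I would establish that this minimum value is in fact small, say $u_\epsilon(\tau_2(\epsilon))<1/2$ for $\epsilon$ small: indeed, by Lemma \ref{lemaZ} (rescaling around the first maximum $\tau_1(\epsilon)$, or more precisely around $T_0=\tau_3(\epsilon)$ as in the passage preceding Lemma \ref{u(t_1)}), the first maximum satisfies $u_\epsilon(\tau_1(\epsilon))>\sigma$, and then the same energy/ODE estimate as in Lemma \ref{unmax} shows the solution drops below any fixed level — in particular below $1/2$ — within an $O(\epsilon)$-interval to the right of $\tau_1(\epsilon)$, so $\tau_2(\epsilon)$ is reached with $u_\epsilon$ already well below $1/2$; by Lemma \ref{tau+} together with $\theta_1<\pi/2$ fixed, $\tau_2(\epsilon)$ stays in a compact subinterval of $(\pi/4,\pi/2)$.

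Once I know $u_\epsilon(\tau_2(\epsilon))<1/2$ and that $\tau_2(\epsilon)$ is a genuine interior minimum, I would pick $\delta>0$ small (uniformly in $\epsilon$, using that $\tau_2(\epsilon)$ lies in a compact subinterval of $(0,\pi/2)$) so that $u_\epsilon$ is decreasing on $[\tau_2(\epsilon)-\delta,\tau_2(\epsilon)]$ and increasing on $[\tau_2(\epsilon),\tau_2(\epsilon)+\delta]$, with $u_\epsilon(\tau_2(\epsilon)\pm\delta)<1/2$; continuity of the flow and the already-established smallness of the minimum give this. Then I apply Lemma \ref{u(t_1)} on each of the two half-intervals — or, more cleanly, observe that the hypotheses of Lemma \ref{u(t_1)} are met on $[\tau_2(\epsilon)-\delta,\tau_2(\epsilon)+\delta]$ after replacing the monotonicity hypothesis there by the present "decreasing then increasing" situation, handled by running the barrier comparison of Lemma \ref{u(t_1)} separately on the decreasing part (choosing $K$ with $\frac{\cos 2\theta}{\sin 2\theta}+K>0$) and on the increasing part (choosing $K$ with $\frac{\cos 2\theta}{\sin 2\theta}+K<0$), and taking the smaller of the two resulting $\beta$'s. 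Either way, Lemma \ref{lemaphi} yields constants $\beta>0$, $\epsilon_1>0$ with $u_\epsilon(\tau_2(\epsilon))\le e^{-\beta/\epsilon}$ for $\epsilon<\epsilon_1$.

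The main obstacle I anticipate is the uniformity of $\delta$ (and of the estimate $u_\epsilon(\tau_2(\epsilon)\pm\delta)<1/2$) as $\epsilon\to 0$: one must rule out that $\tau_2(\epsilon)$ drifts toward $\pi/4$ (where $\frac{\cos 2\theta}{\sin 2\theta}$ blows up and the sign choice for $K$ degenerates) or toward the endpoint of the spike where $u_\epsilon$ is not yet below $1/2$. This is exactly where Lemma \ref{tau+} (which gives $\limsup\tau_1(\epsilon)\le\pi/4$, hence the interval $(\tau_1(\epsilon),\tau_2(\epsilon))$ does not collapse against $\pi/4$ from the right in a harmful way) and the $O(\epsilon)$ descent estimate from the $k=1$ analysis must be combined; I would phrase the descent estimate so that the level $1/2$ is attained at a point $\tau_1(\epsilon)+O(\epsilon)$ strictly before $\tau_2(\epsilon)$, giving a fixed $\delta$ independent of $\epsilon$. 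The remaining steps are then the routine barrier construction already carried out in Lemma \ref{u(t_1)}.
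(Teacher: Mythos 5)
Your toolbox is the right one (the barrier of Lemma \ref{lemaphi} fed through Lemma \ref{u(t_1)}), but centering the comparison interval at $\tau_2(\epsilon)$ itself creates two problems that your plan does not resolve. First, a two-sided interval $[\tau_2(\epsilon)-\delta,\tau_2(\epsilon)+\delta]$ of \emph{fixed} width on which $u_\epsilon<1/2$ need not exist: nothing available at this point rules out $\tau_2(\epsilon)\to T_0$ (the estimate $(T_0-\tau_2(\epsilon))/\epsilon\to\infty$ is proved only afterwards, and is in any case compatible with $T_0-\tau_2(\epsilon)=\sqrt{\epsilon}$), and if $\tau_2(\epsilon)$ is within $o(1)$ of $T_0$ the right half of your interval swallows the spike at $T_0$, where $u_\epsilon>1$, so the inequality $u_\epsilon^5-u_\epsilon+\kappa u_\epsilon<0$ on which the barrier computation rests fails there. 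Lemma \ref{tau+} controls only $\tau_1(\epsilon)$ and does not exclude this scenario. Second, even where $u_\epsilon<1/2$ does hold, ``running the barrier separately on each half'' is circular as stated: the minimum principle on $[\tau_2(\epsilon)-\delta,\tau_2(\epsilon)]$ needs $v=\varphi_\epsilon-u_\epsilon\geq 0$ at \emph{both} endpoints, and at the endpoint $\tau_2(\epsilon)$ that is precisely what you are trying to prove. (One could repair this with a Neumann barrier satisfying $\varphi'(\tau_2(\epsilon))=0$ and a Hopf-type argument, but that is no longer Lemma \ref{u(t_1)}; and no single $K$ makes $\left(\frac{\cos 2\theta}{\sin 2\theta}+K\right)u_\epsilon'\leq 0$ on both halves at once.)

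The paper's proof avoids both difficulties with one observation your plan is missing. Since $\tau_1(\epsilon)<\tau_2(\epsilon)<\tau_3(\epsilon)=T_0$ by (\ref{taus}) and $\limsup_{\epsilon\to 0}\tau_1(\epsilon)\leq\pi/4<T_0$ by Lemma \ref{tau+}, for a fixed small $\delta$ one always has the dichotomy (\ref{either})--(\ref{or}): either $|\tau_2(\epsilon)-T_0|>2\delta$ or $|\tau_2(\epsilon)-\pi/4|>2\delta$. In the first case the whole interval $(t_1-\delta,t_1+\delta)$ is placed inside the monotone \emph{increasing} stretch $(\tau_2(\epsilon),T_0)$, Lemma \ref{u(t_1)} applies verbatim to give $u_\epsilon(t_1)<e^{-\beta/\epsilon}$, and monotonicity transfers the bound to the left endpoint: $u_\epsilon(\tau_2(\epsilon))<u_\epsilon(t_1)$. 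In the second case the same is done on the \emph{decreasing} stretch $(\tau_1(\epsilon),\tau_2(\epsilon))$, with the opposite sign of $K$. You should replace your two-sided construction by this one-sided placement plus monotonicity; the rest of your outline then goes through.
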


\begin{proof}
\par
\noindent
From (\ref{taus}) and (\ref{limsup}) follows that for any 
fixed $\epsilon>0$ there exists $\delta >0$ (independent of $\epsilon$ small enough) such that
\begin{eqnarray}\label{either}
 \hbox{ either }\hspace{0.5cm} |\tau_2(\epsilon)- T_0|&>&2\delta \\\label{or}
 \hbox{ or }    \hspace{0.5cm} |\tau_2(\epsilon)- \pi/4|&>&2\delta.
\end{eqnarray}
\par
\noindent
Assume that we have  (\ref{either}) and let  $t_1$ be such that
\[
 (t_1- \delta, t_1+\delta)\subset (\tau_2(\epsilon), T_0) \hspace{0.5 cm}\hbox{and} \hspace{0.5 cm} u_\epsilon(t_1\pm \delta)<1/2.
\]
Then $u$ is increasing on $(t_1- \delta, t_1+\delta)$ and it follows from Lemma \ref{u(t_1)} that
\[
 u_{\epsilon}(\tau_2(\epsilon))< u_{\epsilon}(t_1)<e^{-\beta/\epsilon},
\]
where $\beta$ does not depend on $\epsilon$.
The case in which $\delta$ satisfies (\ref{or}) is analogous (note that the constant 1/2 used in Lemma 3.1 and Lemma 4.2 can be replaced for any other positive constant, 
as long as it is independent of $\epsilon$).
\end{proof}
\hfill$\square$

\begin{lem}
\[ 
\lim_{\epsilon\to 0}\frac{T_0-\tau_2(\epsilon)}{\epsilon}=\infty.
\]
\end{lem}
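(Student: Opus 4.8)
The plan is to argue by contradiction, rescaling the equation on the fast scale around the second critical point $\tau_2(\epsilon)$, where by Lemma \ref{immediately} the solution is exponentially small with vanishing derivative, and then to invoke continuous dependence: such a solution must stay close to the zero solution of the autonomous equation (\ref{zzeta}) over any fixed bounded interval, whereas at the rescaled distance $(T_0-\tau_2(\epsilon))/\epsilon$ from $\tau_2(\epsilon)$ the solution already exceeds the value $1$.

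Concretely, suppose $(T_0-\tau_2(\epsilon))/\epsilon$ does not tend to $+\infty$; then along some sequence $\epsilon_n\to 0$ one has $M_n:=(T_0-\tau_2(\epsilon_n))/\epsilon_n\to M$ for a finite $M\geq 0$, and in particular $\tau_2(\epsilon_n)\to T_0\in(\pi/4,\pi/2)$. Put $\theta=\tau_2(\epsilon_n)+\epsilon_n t$ and $z_n(t)=u_{\epsilon_n}(\theta)$; then $z_n$ solves an equation of the form (\ref{zeta}) with $T_0$ replaced by $\tau_2(\epsilon_n)$, with $z_n(0)=u_{\epsilon_n}(\tau_2(\epsilon_n))$ and $z_n'(0)=\epsilon_n u_{\epsilon_n}'(\tau_2(\epsilon_n))=0$. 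By Lemma \ref{immediately}, $z_n(0)\leq e^{-\beta/\epsilon_n}\to 0$. Since $\tau_2(\epsilon_n)\to T_0$, the quantity $2\tau_2(\epsilon_n)+2t\epsilon_n$ stays in a compact subinterval of $(0,\pi)$ for $t$ in any fixed bounded set, so the first-order coefficient of $z_n$ tends to $0$ uniformly there.

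I would then apply continuous dependence of an ODE solution on its coefficients and initial data: the limiting problem is $Z''+Z^5-Z=0$ with $Z(0)=Z'(0)=0$, whose unique solution is $Z\equiv 0$, so $z_n\to 0$ and $z_n'\to 0$ uniformly on the fixed interval $[0,M+1]$. On the other hand $z_n(M_n)=u_{\epsilon_n}(T_0)=u_0(\epsilon_n)>1$, because $T_0=\tau_3(\epsilon_n)$ is a local maximum of $u_{\epsilon_n}$ lying above the line $u=1$, while $M_n\to M<M+1$ gives $M_n\in[0,M+1]$ for $n$ large; hence $z_n(M_n)\to 0$, a contradiction. Thus $(T_0-\tau_2(\epsilon))/\epsilon\to\infty$ as $\epsilon\to 0$.

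The only point that needs care is ensuring the rescaled solutions do not escape to large values before the bounded time $M+1$ — the uniform a priori bound underlying the continuous-dependence statement. Here the coefficient of $z_n'$ is uniformly bounded on the relevant interval and the initial data tend to $(0,0)$, so a standard continuation argument based on the energy identity derived from (\ref{zeta}) (as in the proof of Lemma \ref{lemaZ}, cf. \cite[Lemma 15]{Kwong}) yields it immediately; everything else is soft.
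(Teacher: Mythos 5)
Your argument is correct, but it is genuinely different from the one in the paper. The paper's proof is a quantitative barrier argument: it freezes the coefficient $\cos(2\theta)/\sin(2\theta)$ at $T_0$, drops the term $-u^5$, and compares $u_\epsilon$ on $(\tau_2(\epsilon),T_0)$ with the explicit solution $w$ of the resulting linear constant-coefficient problem started from the same (exponentially small, by Lemma \ref{immediately}) data at $\tau_2(\epsilon)$; since $w$ grows at rate $e^{(\theta-\tau_2)/\epsilon}$ while starting from $e^{-\tilde\beta/\epsilon}$, one gets $w(\tau_2(\epsilon)+\sqrt{\epsilon})<1$, hence $u_\epsilon(\tau_2(\epsilon)+\sqrt{\epsilon})<1<u_0(\epsilon)$ and the explicit lower bound $T_0-\tau_2(\epsilon)\geq\sqrt{\epsilon}$. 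Your proof instead blows up at $\tau_2(\epsilon)$ on the scale $\epsilon$ and uses compactness/continuous dependence: the rescaled solutions converge on bounded intervals to the unique solution of (\ref{zzeta}) with data $(0,0)$, namely $Z\equiv 0$, which is incompatible with the value $u_0(\epsilon)>1$ being attained at bounded rescaled time. This is sound: the a priori bound you flag does follow from the energy identity for (\ref{zeta}) via Gronwall, and the instability of the equilibrium $Z\equiv 0$ is harmless on a fixed bounded interval. What each approach buys: the paper's computation yields an explicit rate ($T_0-\tau_2(\epsilon)\gtrsim\sqrt{\epsilon}$, and in fact a constant lower bound if pushed), and genuinely uses the exponential smallness from Lemma \ref{immediately}; your soft argument gives only the qualitative limit stated in the lemma (which is all that is used afterwards, in Lemma ($\tilde{B}$)), but it is more robust — it needs only $u_\epsilon(\tau_2(\epsilon))\to 0$, not the exponential decay — and it reuses the convergence machinery already set up for Lemma \ref{lemaZ} instead of a separate explicit computation.
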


\begin{proof}
Note that $u_\epsilon$ is a positive solution of the equation
\begin{equation}\label{eqtau2}
 u_\epsilon ''(\theta) + 2 \frac{ \cos(2\theta)}{\sin(2\theta)} u_\epsilon '(\theta) =
 \frac{  u_\epsilon(\theta)-u_\epsilon(\theta)^5 }{\epsilon^2}
\end{equation}
such that
\[
  u_\epsilon(\tau_2(\epsilon))=e^{-\tilde{\beta}/\epsilon}\hspace{0.3cm}
   u_\epsilon '(\tau_2(\epsilon))=0\hspace{0.3cm}
    u_\epsilon(T_0)=u_0(\epsilon)\hspace{0.3cm}
     u_\epsilon '(T_0)=0
      \]
We want to show that $u_\epsilon(\tau_2(\epsilon)+\sqrt{\epsilon})<1$, because $u_\epsilon$ is 
increasing in the interval $(\tau_2(\epsilon), T_0)$ and $u_\epsilon(\tau_2(\epsilon))<1<u_\epsilon(T_0)$.
This means that $u_\epsilon$ cannot catch up $u_0(\epsilon)$ in the interval   $(\tau_2(\epsilon),\tau_2(\epsilon)+\sqrt{\epsilon})$. Then
\[
 \frac{T_0-\tau_2(\epsilon)}{\epsilon}>\frac{\sqrt{\epsilon}}{\epsilon}\to\infty
 \hspace{0.3cm} \hbox{ when } \epsilon\to 0.
\]
To see that, consider the linear auxiliary problem:
\begin{equation}
 w''(\theta) + 2 \frac{ \cos(2T_0)}{\sin(2T_0)} w'(\theta) =
 \frac{  w(\theta) }{\epsilon^2}
\end{equation}
with initial conditions
\[
  w(\tau_2(\epsilon))=e^{-\tilde{{\beta}}/\epsilon} \hspace{0.3cm} w'(\tau_2(\epsilon))=0.
      \]
Then by the Sturm Comparison Theory for all $0<\ti<T_0$, we have $u_\epsilon(\ti)<w(\ti)$.
In particular,
\[
u_\epsilon(\tau_2(\epsilon)+\sqrt{\epsilon})<w(\tau_2(\epsilon)+\sqrt{\epsilon}).
\]
Note that 
$w(\ti)=A e^{c_1(\ti-\tau_2(\epsilon))}+ B e^{c_2(\ti-\tau_2(\epsilon))},$
where $c_1, c_2$ are the roots of the equation 
$\epsilon^2 x^2 - 2\epsilon^2K x -1 =0$, with $K=-\frac{ \cos(2T_0)}{\sin(2T_0)}$.
Let $\mu_\epsilon= K^2 + \kappa/\epsilon^2$. Then $A$, $B$ are given by
\[
A=\frac{K-\sqrt{\epsilon}}{2\sqrt{\mu_\epsilon}}e^{-\tilde{\beta}/\epsilon}, \hspace{1cm} 
B=\frac{K+\sqrt{\epsilon}}{2\sqrt{\mu_\epsilon}}e^{-\tilde{\beta}/\epsilon}.
\]
Finally we have
\[
w(\tau_2(\epsilon)+ \sqrt{\epsilon})=\epsilon^{-\tilde{\beta}/\epsilon}+(K-\sqrt{\mu_\epsilon})\sqrt{\epsilon}+
\frac{K-\sqrt{\epsilon}}{2\sqrt{\mu_\epsilon}}e^{-\tilde{\beta}/\epsilon+2\sqrt{\mu_\epsilon}{\sqrt{\epsilon}}}.
 \]
Consequently, for $\epsilon$ small enough $w(\tau_2(\epsilon)+\sqrt{\epsilon})<1$.

\end{proof}
\hfill$\square$
\par
\noindent
Integration of $E_\epsilon '(\ti)$  over $(\tau_2(\epsilon),T_0)$ yields
\[
F(u_0(\epsilon)) - F(u_\epsilon(\tau_2(\epsilon))) =J(\epsilon)
\]
where \[J(\epsilon)= -2 \epsilon^2\displaystyle\int_{\tau_2(\epsilon)}^{T_0} \frac{\cos(2\ti)}{\sin(2\ti)} u_\epsilon'(\ti)^2 d\ti. \]
Then
\begin{equation}\label{Fuo}
 F(u_0(\epsilon)) = F(u_\epsilon(\tau_2(\epsilon)))+J(\epsilon) .
\end{equation}
Next we show that there is a constant $A> 0 $ such that $ F (u_0(\epsilon))> A\epsilon$ for $\epsilon$ enough small.

\begin{lemanonum}[\bf{$\tilde{B}$}]
There is a constant $C_1>0$ such that 
\[
J(\epsilon)>C_1\epsilon
\]
 for $\epsilon$ small enough.
\end{lemanonum}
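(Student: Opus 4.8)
The plan is to run the proof of Lemma~(B) almost verbatim, with the interval of integration $(\pi/4,T_0)$ replaced by $(\tau_2(\epsilon),T_0)$ and with the blow-up Lemma~\ref{lemaZ} applied around $T_0$. First I would rescale: put $\theta=T_0+\epsilon s$ and $z_\epsilon(s)=u_\epsilon(T_0+\epsilon s)$, so that $z_\epsilon$ solves~(\ref{zeta}) with $z_\epsilon(0)=u_0(\epsilon)$, $z_\epsilon'(0)=0$, and — since $u_\epsilon$ is positive and increasing on $(\tau_2(\epsilon),T_0)$ — the function $z_\epsilon$ is positive and increasing on $\bigl((\tau_2(\epsilon)-T_0)/\epsilon,\,0\bigr)$. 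By the lemma immediately preceding this one, $(T_0-\tau_2(\epsilon))/\epsilon\to\infty$, so $\psi(\epsilon):=(\tau_2(\epsilon)-T_0)/\epsilon\to-\infty$, and Lemma~\ref{lemaZ} yields $z_\epsilon\to Z_0$ and $z_\epsilon'\to Z_0'$ uniformly on bounded intervals, where $Z_0$ is the solution of~(\ref{zzeta})--(\ref{zcondition1}) with $Z_0(0)=\sigma=3^{1/4}$. Using $d\theta=\epsilon\,ds$ and $u_\epsilon'(\theta)=\epsilon^{-1}z_\epsilon'(s)$, the definition of $J(\epsilon)$ becomes
\[
J(\epsilon)=-2\epsilon\int_{(\tau_2(\epsilon)-T_0)/\epsilon}^{0}\frac{\cos(2T_0+2s\epsilon)}{\sin(2T_0+2s\epsilon)}\,z_\epsilon'(s)^2\,ds,
\]
which is exactly the expression treated in Lemma~(B).

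Next I would extract the lower bound. Since $T_0\in(\pi/4,\pi/2)$ we have $2T_0\in(\pi/2,\pi)$, hence $\cos(2T_0)/\sin(2T_0)<0$; by continuity the weight $\cos(2T_0+2s\epsilon)/\sin(2T_0+2s\epsilon)$ is $\le 0$ for $\epsilon$ small on any fixed bounded $s$-interval, and in fact on the whole range $s\in\bigl((\tau_2(\epsilon)-T_0)/\epsilon,0\bigr)$, which corresponds to $\theta\in(\tau_2(\epsilon),T_0)$, as long as $\tau_2(\epsilon)\ge\pi/4$. Fixing $L>0$ and noting $(\tau_2(\epsilon)-T_0)/\epsilon<-L$ for $\epsilon$ small, the nonnegativity of the integrand of $J$ gives
\[
\frac{1}{\epsilon}\,J(\epsilon)\ \ge\ -2\int_{-L}^{0}\frac{\cos(2T_0+2s\epsilon)}{\sin(2T_0+2s\epsilon)}\,z_\epsilon'(s)^2\,ds,
\]
and the right-hand side converges, as $\epsilon\to0$, to $-2\,\dfrac{\cos(2T_0)}{\sin(2T_0)}\displaystyle\int_{-L}^{0}Z_0'(s)^2\,ds$, which is strictly positive because $Z_0$ is strictly increasing on $(-\infty,0)$. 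Hence $\liminf_{\epsilon\to0}\epsilon^{-1}J(\epsilon)>0$, and any positive number below this $\liminf$ may be taken as $C_1$.

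The one point that requires care — and which I expect to be the main obstacle — is the sign of the weight over the full integration range, i.e. the claim $\tau_2(\epsilon)\ge\pi/4$ for $\epsilon$ small. If this fails, $J(\epsilon)$ acquires an extra term $-2\epsilon^2\int_{\tau_2(\epsilon)}^{\pi/4}\frac{\cos(2\theta)}{\sin(2\theta)}u_\epsilon'(\theta)^2\,d\theta$, whose weight is positive and hence of the wrong sign. However, on $(\tau_2(\epsilon),\pi/4)$ the solution is increasing and, since $Z_0(s)\to0$ as $s\to-\infty$ and $\pi/4<T_0$, Lemma~\ref{lemaZ} (applied around $T_0$) forces $u_\epsilon<1/2$ there; then Lemma~\ref{u(t_1)}, together with the derivative estimate used in the proof of Lemma~(A), shows this extra term is $O(\epsilon^{-2}e^{-2\beta/\epsilon})=o(\epsilon)$ and so does not affect the conclusion. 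Apart from this bookkeeping, the proof is identical to that of Lemma~(B).
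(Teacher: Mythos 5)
Your proposal is correct and follows essentially the same route as the paper: rescale around $T_0$, use the preceding lemma to get $(\tau_2(\epsilon)-T_0)/\epsilon\to-\infty$ so that Lemma \ref{lemaZ} applies, and bound $\liminf \epsilon^{-1}J(\epsilon)$ from below by $-2\frac{\cos(2T_0)}{\sin(2T_0)}\int_{-L}^0 Z_0'(s)^2\,ds>0$. The only difference is that the paper simply assumes $\tau_2(\epsilon)>\pi/4$ and refers the other case to the earlier $J_1$-$J_2$ splitting, whereas you spell out that the extra contribution over $(\tau_2(\epsilon),\pi/4)$ is $o(\epsilon)$ --- the same idea, carried out in slightly more detail.
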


\begin{proof}
To prove this lemma we may assume that $\tau_2(\epsilon)>\pi/4$, because when $\tau_2(\epsilon)<\pi/4$, the proof operates in the same way as before. 
Write $\ti=T_0 + \epsilon s$ and $z_{\epsilon}(s)=u_{\epsilon}(\ti)$ and replace $u$ by $z$ in $J$. Then, $z_\epsilon$ solves problem (\ref{zeta}) and
\[
 J(\epsilon)=
 -2\epsilon\displaystyle\int_{\frac{\tau_2(\epsilon)-T_0}{\epsilon}}^{0} \frac{\cos(2T_0+2s\epsilon)}{\sin(2T_0+2s\epsilon)}z_{\epsilon}'(s)^2 ds
\]
It follows from Lemma \ref{lemaZ} and Lemma 4.9 that for any $0<L<(T_0-\pi/4)/\epsilon$
\begin{equation}
\begin{array}{rcl}
C_1:=\liminf \frac{1}{\epsilon} J(\epsilon)&\geq& -2\lim_{\epsilon \to 0} 
\displaystyle\int_{-L}^{0}\frac{ \cos(2T_0 + 2s\epsilon)}{\sin(2T_0 + 2s\epsilon)}z_{\epsilon}'(s)^2 \, ds \\
 \\
&=& -2\frac{ \cos(2T_0)}{\sin(2T_0)}\displaystyle\int_{-L}^{0}Z_0'(s)^2 \, ds >0.\\
\end{array}
\end{equation}
\end{proof}
\hfill$\square$
\par
\noindent
The following lemma will be needed in order to complete the proof and follows immediately from Lemma \ref{immediately}.

\begin{lemanonum}[\bf{$\tilde{C}$}]
There is a constant $C_2>0$ such that 
\[
|F(u_\epsilon(\tau_2(\epsilon)))|<C_2e^{-\beta/2\epsilon}
\]
 for $\epsilon$ small enough.
\end{lemanonum}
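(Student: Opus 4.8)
The plan is to deduce Lemma $\tilde{C}$ directly from Lemma \ref{immediately}. That lemma furnishes constants $\beta>0$ and $\epsilon_1>0$ such that $u_\epsilon(\tau_2(\epsilon))\le e^{-\beta/\epsilon}$ for every $\epsilon<\epsilon_1$; in particular, after shrinking $\epsilon_1$ if necessary, we may assume $u_\epsilon(\tau_2(\epsilon))<1$ for all such $\epsilon$, so that the value lies in the range on which $|F|$ is monotone increasing.

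Next I would invoke the explicit form $F(u)=\tfrac16 u^6-\tfrac12 u^2$ to estimate $|F|$ near zero: for $0<u<1$ one has $|F(u)|=\tfrac12 u^2-\tfrac16 u^6\le \tfrac12 u^2$. Applying this with $u=u_\epsilon(\tau_2(\epsilon))$ together with the bound from Lemma \ref{immediately} gives
\[
|F(u_\epsilon(\tau_2(\epsilon)))|\le \tfrac12\, u_\epsilon(\tau_2(\epsilon))^2\le \tfrac12\, e^{-2\beta/\epsilon}
\]
for $\epsilon<\epsilon_1$.

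Finally, since $2\beta/\epsilon\ge \beta/(2\epsilon)$ for every $\epsilon>0$, we have $e^{-2\beta/\epsilon}\le e^{-\beta/(2\epsilon)}$, whence $|F(u_\epsilon(\tau_2(\epsilon)))|\le C_2\, e^{-\beta/(2\epsilon)}$ with $C_2=\tfrac12$ (any larger constant works as well). There is no genuine obstacle in this argument; the only points meriting a word of care are to reuse precisely the constant $\beta$ produced by Lemma \ref{immediately} and to note that the exponential bound automatically places $u_\epsilon(\tau_2(\epsilon))$ in $(0,1)$ for $\epsilon$ small, which is exactly where the elementary inequality $|F(u)|\le \tfrac12 u^2$ is used.
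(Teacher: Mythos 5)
Your proof is correct and follows exactly the route the paper intends: the paper states that Lemma $\tilde{C}$ ``follows immediately from Lemma \ref{immediately}'', and its proof of the analogous Lemma (C) in the one-spike case uses the same monotonicity of $|F|$ on $(0,1)$ combined with the exponential bound $u_\epsilon(\tau_2(\epsilon))\le e^{-\beta/\epsilon}$. Your explicit estimate $|F(u)|\le \tfrac12 u^2$ and the observation that $e^{-2\beta/\epsilon}\le e^{-\beta/(2\epsilon)}$ cleanly fill in the details the paper leaves implicit.
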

\par
\vspace{0.3 cm}
\noindent
From Lemmas ($\tilde{B}$), ($\tilde{C}$) and the Eq. (\ref{Fuo}) we can see that $ F (u_0(\epsilon))> 0$
for $\epsilon$ enough small. Then it follows that $u_0(\epsilon)\geq \sigma$ and we can repeat the 
argument in Lemma 4.3 to prove  that $ u_ \epsilon $ has a zero $\tau_\ep \in(T_0, \frac{\pi}{2})$ such that $|T_0 - \tau_\epsilon|= O(\sqrt{\epsilon})$. 
It allows us to establish the following
\begin{Propos}\label{prop1'}
For $\epsilon$ small enough there exists $\al_0 \in \mathcal{A}(\epsilon)$  such that the solution  $u_{\al_0}(\ti)$ of problem  
(\ref{alpha})  with initial value $\al_0$ has exactly two spikes, where $\mathcal{A}(\epsilon)$  is the set defined in (\ref{mathcalA}).
\end{Propos}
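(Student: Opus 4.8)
\textbf{Plan for the proof of Proposition \ref{prop1'}.}
The strategy mirrors the one used to establish Proposition \ref{prop1} in the single-spike case. We have already fixed a point $T_0 \in (\frac{\pi}{4},\theta_1)$ and, for $\epsilon$ small, produced a solution $u_\epsilon = u_{\al_0(\epsilon)}$ of problem (\ref{alpha}) whose third critical point satisfies $\tau_3(\epsilon)=T_0$, so that $u_\epsilon$ has exactly two local maxima on $(0,T_0)$ (at $\tau_1(\epsilon)$ and $\tau_3(\epsilon)$) and one local minimum at $\tau_2(\epsilon)$. The goal is to show that this particular $\al_0(\epsilon)$ lies in $\mathcal{A}(\epsilon)$, i.e. that $u_\epsilon$ actually vanishes before $\frac{\pi}{2}$, and that the resulting solution has exactly two spikes. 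By the remarks immediately preceding the statement, the combination of Lemma~($\tilde{B}$), Lemma~($\tilde{C}$) and the identity (\ref{Fuo}) already gives $F(u_0(\epsilon))>0$ for $\epsilon$ small, hence $u_0(\epsilon)=u_\epsilon(T_0)\geq\sigma$ since $F$ is increasing on $(1,\infty)$ and $u_0(\epsilon)>1$.

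First I would invoke the argument of Lemma \ref{unmax} verbatim, with the present $u_\epsilon$ in the role of the one-spike solution there: the only inputs that lemma uses are $u_\epsilon(T_0)>\sigma$, $u_\epsilon'(T_0)=0$, and the lower bound $F(u_0(\epsilon))>A\epsilon$ on the energy, all of which we now have. Exactly as in (\ref{u''})--(\ref{ineq2}), one deduces that $u_\epsilon$ is strictly decreasing immediately after $T_0$, that $u_\epsilon$ crosses the level $\sigma$ at some $\tau_\sigma = T_0 + O(\epsilon)$, and then—using the monotonicity of the energy $E_\epsilon$ on $(\frac{\pi}{4},\frac{\pi}{2})$ together with the bound $u_\epsilon'(\tau_\sigma)^2 > A/\epsilon$—that the derivative stays bounded away from $0$ (indeed below $-\sqrt{A/\epsilon}$) until the solution hits the $\theta$-axis at some $\tau_\epsilon < \frac{\pi}{2}$ with $|\tau_\epsilon - T_0| = O(\sqrt{\epsilon})$. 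This shows $\Theta(\al_0(\epsilon),\epsilon)=\tau_\epsilon < \frac{\pi}{2}$, so $\al_0(\epsilon)\in\mathcal{A}(\epsilon)$.

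It remains to check that the solution has \emph{exactly} two spikes and no more, i.e. that there is no further oscillation between $\tau_1(\epsilon)$ and $\tau_2(\epsilon)$ or between $\tau_2(\epsilon)$ and $T_0$, nor any extra critical point after $T_0$. For the interval $(T_0,\tau_\epsilon)$ this is contained in Lemma \ref{unmax} (the solution is strictly decreasing there). For the structure on $(0,T_0)$ one uses that $u_\epsilon$ was constructed via $\tau_3(\al_0(\epsilon))=T_0$, together with Lemmas~2.2, 2.3 and \ref{lim}: for the corresponding $\lambda$ the linearized solution $w_\lambda$ has precisely the right number of critical points, and by continuous dependence $u_\epsilon$ inherits exactly the ordering $\tau_1(\epsilon)<\tau_2(\epsilon)<\tau_3(\epsilon)=T_0$ with $u_\epsilon(\tau_1)>1$, $u_\epsilon(\tau_2)<1$, $u_\epsilon(\tau_3)>1$ and no intervening extrema; Lemma \ref{immediately} and Lemma 4.9 pin down that $u_\epsilon(\tau_2(\epsilon))\leq e^{-\beta/\epsilon}$ and that the second minimum is well separated from both $\frac{\pi}{4}$ and $T_0$, which is what rules out the two maxima merging. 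Concatenating these facts, $u_\epsilon$ has exactly the two maxima at $\tau_1(\epsilon)$ and $T_0$ on $(0,\tau_\epsilon)$, hence exactly two spikes.

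\textbf{Main obstacle.} The delicate point is not the existence of the zero $\tau_\epsilon$—that is a direct transcription of Lemma \ref{unmax}—but rather ensuring that the solution genuinely keeps \emph{two} distinct spikes in the limit $\epsilon\to 0$, i.e. that the first spike at $\tau_1(\epsilon)$ does not drift to $\frac{\pi}{4}$ or collide with the second maximum, and that the deep minimum at $\tau_2(\epsilon)$ (where $u_\epsilon$ is exponentially small) is reached with $u_\epsilon'(\tau_2(\epsilon))=0$ cleanly rather than the solution dipping to zero prematurely. This is precisely why Lemma \ref{tau+}, Lemma \ref{immediately} and Lemma 4.9 are proved first: Lemma \ref{tau+} controls $\limsup \tau_1(\epsilon)$, Lemma \ref{immediately} gives the exponential smallness at $\tau_2(\epsilon)$, and Lemma 4.9 shows $(T_0-\tau_2(\epsilon))/\epsilon\to\infty$, so that on the scale $\epsilon$ the picture near $T_0$ is exactly a single rescaled spike governed by $Z_0$. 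Assembling these three lemmas into the statement ``exactly two spikes, and $\al_0(\epsilon)\in\mathcal{A}(\epsilon)$'' is the substantive content; once they are in hand the proof is a bookkeeping exercise paralleling the $k=1$ case.
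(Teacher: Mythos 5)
Your proposal follows the paper's own route: combine Lemma ($\tilde{B}$), Lemma ($\tilde{C}$) and the energy identity (\ref{Fuo}) to get $F(u_0(\epsilon))>0$ (indeed $>A\epsilon$), conclude $u_0(\epsilon)\geq\sigma$, and then rerun the argument of Lemma \ref{unmax} to produce a zero $\tau_\epsilon$ with $|T_0-\tau_\epsilon|=O(\sqrt{\epsilon})$, so that $\al_0(\epsilon)\in\mathcal{A}(\epsilon)$ and the solution has exactly two spikes. This matches the paper's proof; your extra bookkeeping on why no additional extrema appear is a reasonable elaboration of a point the paper leaves implicit.
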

Let $\mathcal{A}_2(\epsilon)$ be  the connected components 
of $\mathcal{A}(\epsilon)$ such that the solutions $u_{\alpha ,\epsilon}$ with $\alpha \in \mathcal{A}_2(\epsilon)$
 have exactly two spikes.
The proof of Theorem \ref{ksoluciones} for $k=2$ results from the following Propositions.

\begin{Propos}\label{prop2'}
Let $(\al_2^-,\al_2^+) \subset(0,1)$ be any connected component of $\mathcal{A}_2(\epsilon)$ and $\Ti(\al, \epsilon)$ as in (\ref{teta-alfa-epsilon}). 
Then
\[\lim_{\al \to \al_2^{\pm}} \Ti(\al, \epsilon)= \frac{\pi}{2}.\]
\end{Propos}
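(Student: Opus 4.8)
The plan is to mimic exactly the proof of Proposition \ref{prop2} (the one-spike case), since the structure of the argument is identical and only the "spike count" bookkeeping changes. Recall that $\mathcal{A}_2(\epsilon)$ is by definition an \emph{open} subset of $(0,1)$ (being a union of connected components of the open set $\mathcal{A}(\epsilon)$ on which, additionally, the solution has exactly two spikes — and having exactly two spikes is an open condition once we know the endpoints $\Ti(\al,\epsilon)<\pi/2$ and invoke continuous dependence plus the transversality $u'_{\al,\epsilon}(\Ti)<0$). So $(\al_2^-,\al_2^+)$ is a genuine open interval and $\al_2^{\pm}\notin\mathcal{A}_2(\epsilon)$.

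\textbf{Main step.} I argue by contradiction. Suppose $\lim_{\al\to\al_2^-}\Ti(\al,\epsilon)\neq \pi/2$; since $\Ti(\al,\epsilon)\le\pi/2$ always and $\Ti$ is continuous where the solution vanishes, along a sequence $\al_n\to\al_2^-$ we would have $\Ti(\al_n,\epsilon)\to\ti_\infty$ for some $\ti_\infty<\pi/2$. By continuous dependence of $u_{\al,\epsilon}$ on $\al$ (uniform on compact $\ti$-intervals, via Arzel\`a--Ascoli as in the proof of Lemma \ref{lemaZ}), the limit solution $u_{\al_2^-,\epsilon}$ satisfies $u_{\al_2^-,\epsilon}(\ti_\infty)=0$ with $\ti_\infty<\pi/2$, hence $\al_2^-\in\mathcal{A}(\epsilon)$ and in fact $\Ti(\al_2^-,\epsilon)=\ti_\infty$. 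It remains to check that $u_{\al_2^-,\epsilon}$ still has exactly two spikes: its critical points $\tau_1,\tau_2$ on $(0,\ti_\infty)$ persist in the limit because each is transversal (the energy argument of Lemma \ref{sigma}, or the Implicit Function Theorem applied to $G(\al,\ti)=u'_{\al,\epsilon}(\ti)$ as in Lemma \ref{alfaestr}, shows $u''\neq0$ at a maximum since $u\neq 0,1$ there), and no new spike can appear or disappear across the limit for the same transversality reason, together with $u'_{\al_2^-,\epsilon}(\ti_\infty)\le 0$ by continuity — and strict, $<0$, by uniqueness since $u_{\al_2^-,\epsilon}(\ti_\infty)=0$. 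Therefore $\al_2^-\in\mathcal{A}_2(\epsilon)$, contradicting that $\al_2^-$ is an endpoint of a connected component. The same argument applies verbatim at $\al_2^+$.

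\textbf{Expected obstacle.} The one place that needs genuine care rather than a reference to the $k=1$ case is the claim that \emph{the number of spikes is preserved in the limit} $\al_n\to\al_2^\pm$: a priori two maxima could merge, or an inflection could develop into a new maximum. The remedy is exactly the transversality used throughout Section 2: at any critical point $\tau$ of $u_{\al,\epsilon}$ in the interior one has $u_{\al,\epsilon}(\tau)\neq 1$ (strict, away from the constant solution, because $\al_2^\pm\in(0,1)$), so from equation (\ref{alpha}) $u''_{\al,\epsilon}(\tau)=\epsilon^{-2}\big(u_{\al,\epsilon}(\tau)-u_{\al,\epsilon}(\tau)^5\big)\neq 0$, and moreover $u''$ has the sign making $\tau$ a strict local max or min; this rules out both merging and creation of critical points under the uniform $C^1$ convergence on $[0,\ti_\infty-\eta]$, while near $\ti_\infty$ the solution is small and strictly decreasing so no extra spike hides there. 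Once this is in hand, I would simply write: "The proof is identical to that of Proposition \ref{prop2}, replacing $\mathcal{A}_1(\epsilon)$ by $\mathcal{A}_2(\epsilon)$ and 'one spike' by 'two spikes', the transversality of the critical points guaranteeing that the spike count is preserved in the limit $\al\to\al_2^{\pm}$."

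\begin{proof}
Since $\mathcal{A}(\epsilon)$ is open and, by continuous dependence together with the transversality of the zero at $\Ti$ and of the interior critical points (see the arguments in Lemmas \ref{sigma} and \ref{alfaestr}), the condition of having exactly two spikes is open, the set $\mathcal{A}_2(\epsilon)$ is open and $(\al_2^-,\al_2^+)$ is a genuine connected component with $\al_2^{\pm}\notin\mathcal{A}_2(\epsilon)$.

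Suppose the conclusion fails at $\al_2^-$ (the argument at $\al_2^+$ is identical). As $\Ti(\al,\epsilon)\le\pi/2$, there would be a sequence $\al_n\to\al_2^-$ with $\Ti(\al_n,\epsilon)\to\ti_\infty<\pi/2$. By continuous dependence of the solutions of (\ref{alpha}) on $\al$, uniformly on compact $\ti$-intervals, $u_{\al_2^-,\epsilon}(\ti_\infty)=0$, so $\al_2^-\in\mathcal{A}(\epsilon)$ and $\Ti(\al_2^-,\epsilon)=\ti_\infty$; moreover $u'_{\al_2^-,\epsilon}(\ti_\infty)<0$ by uniqueness.

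It remains to see that $u_{\al_2^-,\epsilon}$ has exactly two spikes. At any interior critical point $\tau$ of $u_{\al_2^-,\epsilon}$ one has $u_{\al_2^-,\epsilon}(\tau)\neq 1$ (otherwise uniqueness would force $u_{\al_2^-,\epsilon}\equiv 1$, impossible since $\al_2^-\in(0,1)$), hence from (\ref{alpha})
\[
u''_{\al_2^-,\epsilon}(\tau)=\frac{1}{\epsilon^2}\Big(u_{\al_2^-,\epsilon}(\tau)-u_{\al_2^-,\epsilon}(\tau)^5\Big)\neq 0,
\]
so every critical point is nondegenerate. Therefore, under the uniform $C^1$ convergence $u_{\al_n,\epsilon}\to u_{\al_2^-,\epsilon}$ on every interval $[0,\ti_\infty-\eta]$, the two maxima of $u_{\al_n,\epsilon}$ persist and no critical points are created or destroyed; near $\ti_\infty$ the solution is small and strictly decreasing, so it carries no further spike. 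Thus $u_{\al_2^-,\epsilon}$ has exactly two spikes and $\al_2^-\in\mathcal{A}_2(\epsilon)$, contradicting that $\al_2^-$ is an endpoint of a connected component of $\mathcal{A}_2(\epsilon)$. Hence $\lim_{\al\to\al_2^-}\Ti(\al,\epsilon)=\frac{\pi}{2}$, and likewise at $\al_2^+$.
\end{proof}
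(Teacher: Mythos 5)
Your proof is correct, and its core is the same contradiction argument the paper uses for Proposition \ref{prop2}: take a sequence $\al_n\to\al_2^{-}$ with $\Ti(\al_n,\epsilon)\to\ti_\infty<\pi/2$, pass to the limit by continuous dependence, and conclude $\al_2^{-}\in\mathcal{A}(\epsilon)$. (The paper in fact gives no separate proof of Proposition \ref{prop2'}; it is left as a verbatim repetition of the $k=1$ case.) The one genuine difference is the step you single out as needing care --- showing that the spike count is preserved in the limit so that $\al_2^{-}\in\mathcal{A}_2(\epsilon)$. Under the paper's definition this step is superfluous: $\mathcal{A}_2(\epsilon)$ is defined as a union of \emph{connected components of} $\mathcal{A}(\epsilon)$, so $(\al_2^{-},\al_2^{+})$ is itself a connected component of the open set $\mathcal{A}(\epsilon)$ and the contradiction already follows from $\al_2^{-}\in\mathcal{A}(\epsilon)$; there is no need to know how many spikes $u_{\al_2^{-},\epsilon}$ has. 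Your transversality argument (nondegeneracy of interior critical points via $u''(\tau)=\epsilon^{-2}(u(\tau)-u(\tau)^5)\neq 0$) is sound and would be the right tool if one instead defined $\mathcal{A}_2(\epsilon)$ pointwise as $\{\al\in\mathcal{A}(\epsilon):\ u_{\al,\epsilon}\ \hbox{has exactly two spikes}\}$, whose components need not be components of $\mathcal{A}(\epsilon)$; in that sense your version is more robust, at the cost of extra work the paper's bookkeeping makes unnecessary.
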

Now we can define:
\[
\Ti_{min,\epsilon}^2= \min \{ \Ti(\al, \epsilon): \al \in  \mathcal{A}_2(\epsilon)    \}.
\]

\begin{Propos}\label{prop3'}
\begin{equation}
\lim_{\epsilon \to 0} \Ti_{min,\epsilon}^2 =\frac{\pi}{4}.
\end{equation}
\end{Propos}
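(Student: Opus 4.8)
The plan is to mirror the proof of Proposition \ref{prop3} from the one-spike case, transplanting the construction so that the base point $T_0$ governs the \emph{last} (third) critical point rather than the first. Recall that in the construction preceding Proposition \ref{prop1'} we fixed an arbitrary $T_0\in(\frac{\pi}{4},\theta_1)$ and produced, for $\epsilon$ small, an initial value $\al_0(\epsilon)$ so that the solution $u_\epsilon$ has exactly two spikes with $\tau_3(\epsilon)=T_0$; moreover Lemma \ref{immediately}, Lemma~4.9, Lemmas $(\tilde B)$ and $(\tilde C)$ together with \eqref{Fuo} gave $F(u_0(\epsilon))>0$, hence $u_0(\epsilon)\ge\sigma$, and the argument of Lemma \ref{unmax} then produced a zero $\tau_\epsilon\in(T_0,\frac{\pi}{2})$ of $u_\epsilon$ with $|T_0-\tau_\epsilon|=O(\sqrt{\epsilon})$. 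Since $u_\epsilon$ hits zero before $\frac{\pi}{2}$, we have $\al_0(\epsilon)\in\mathcal A(\epsilon)$, and as $u_\epsilon$ has exactly two spikes, in fact $\al_0(\epsilon)\in\mathcal A_2(\epsilon)$, so that $\Ti(\al_0(\epsilon),\epsilon)=\tau_\epsilon$.

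First I would observe that by definition $\Ti_{min,\epsilon}^2\le\Ti(\al_0(\epsilon),\epsilon)=\tau_\epsilon\le T_0+O(\sqrt{\epsilon})$. Since Theorem \ref{nonexis} guarantees $\Ti(\al,\epsilon)>\frac{\pi}{4}$ for every $\al$, we also have $\Ti_{min,\epsilon}^2>\frac{\pi}{4}$ for all $\epsilon$. Combining,
\[
\frac{\pi}{4}<\Ti_{min,\epsilon}^2\le T_0+O(\sqrt{\epsilon}).
\]
Now let $\epsilon\to 0$: the right-hand side tends to $T_0$, so $\limsup_{\epsilon\to 0}\Ti_{min,\epsilon}^2\le T_0$ and $\liminf_{\epsilon\to 0}\Ti_{min,\epsilon}^2\ge\frac{\pi}{4}$. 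But $T_0$ was an \emph{arbitrary} point of $(\frac{\pi}{4},\theta_1)$, and the whole construction — the choice of $\al_0(\epsilon)$, the estimates of Lemmas $(\tilde B)$, $(\tilde C)$, and the $O(\sqrt\epsilon)$ bound on $|T_0-\tau_\epsilon|$ — goes through uniformly for any such $T_0$. Hence we may let $T_0\downarrow\frac{\pi}{4}$, which forces $\limsup_{\epsilon\to 0}\Ti_{min,\epsilon}^2\le\frac{\pi}{4}$ as well. Therefore $\lim_{\epsilon\to 0}\Ti_{min,\epsilon}^2=\frac{\pi}{4}$.

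The main point to be careful about — and the analogue of the only genuinely quantitative step in the one-spike argument — is that the zero $\tau_\epsilon$ lies within $O(\sqrt\epsilon)$ of $T_0$ \emph{independently of how close $T_0$ is to $\frac{\pi}{4}$}; this is what legitimizes the double limit (first $\epsilon\to0$, then $T_0\downarrow\frac\pi4$). Here one uses that once $u_0(\epsilon)\ge\sigma$ the bound \eqref{u''}, $u_\epsilon''(\ti)<-C/\epsilon^2$ while $u_\epsilon(\ti)>\sigma$, holds with a constant $C$ depending only on $\sigma$ and an a priori upper bound $M$ for $u_0(\epsilon)$, not on $T_0$; the subsequent energy estimate giving $u_\epsilon'(\tau_\sigma)^2>A/\epsilon$ with $A=2F(u_0(\epsilon))/\epsilon$ likewise needs only Lemma \ref{lemaA}-type lower bounds that were established uniformly in $T_0\in(\frac\pi4,\theta_1)$. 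Thus no step degenerates as $T_0\to\frac\pi4$, and the argument used to prove Proposition \ref{prop3} carries over verbatim, with "one spike'' replaced by "two spikes'' and $\tau_1$ replaced by $\tau_3$. This completes the proof.

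\begin{proof}
As in the proof of Proposition \ref{prop3}, the point $T_0\in(\frac{\pi}{4},\theta_1)$ used in the construction preceding Proposition \ref{prop1'} may be chosen arbitrarily close to $\frac{\pi}{4}$. For such a $T_0$ and $\epsilon$ small, the solution $u_\epsilon=u_{\al_0(\epsilon)}$ has exactly two spikes, $u_0(\epsilon)\ge\sigma$, and by the argument of Lemma \ref{unmax} it has a zero $\tau_\epsilon\in(T_0,\frac{\pi}{2})$ with $|T_0-\tau_\epsilon|=O(\sqrt{\epsilon})$, all the estimates being uniform in $T_0$ on $(\frac{\pi}{4},\theta_1)$. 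Hence $\al_0(\epsilon)\in\mathcal A_2(\epsilon)$ and $\Ti(\al_0(\epsilon),\epsilon)=\tau_\epsilon$, so that by Theorem \ref{nonexis}
\[
\frac{\pi}{4}<\Ti_{min,\epsilon}^2\le\tau_\epsilon\le T_0+O(\sqrt{\epsilon}).
\]
Letting $\epsilon\to 0$ gives $\frac{\pi}{4}\le\liminf_{\epsilon\to0}\Ti_{min,\epsilon}^2\le\limsup_{\epsilon\to0}\Ti_{min,\epsilon}^2\le T_0$, and since $T_0$ can be taken arbitrarily close to $\frac{\pi}{4}$ we conclude $\lim_{\epsilon\to0}\Ti_{min,\epsilon}^2=\frac{\pi}{4}$.
\end{proof}
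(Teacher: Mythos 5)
Your proof is correct and follows essentially the same route as the paper: the paper leaves Proposition \ref{prop3'} without an explicit proof, intending the reader to repeat the argument of Proposition \ref{prop3} (choose $T_0$ arbitrarily close to $\frac{\pi}{4}$, use the two-spike construction with $\tau_3(\epsilon)=T_0$ and the estimate $|T_0-\tau_\epsilon|=O(\sqrt{\epsilon})$ from the Lemma \ref{unmax} argument, and bound below by Theorem \ref{nonexis}), which is exactly what you do. Your added remark about uniformity of the estimates in $T_0$, justifying the double limit, is a worthwhile point of care but does not change the approach.
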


The it follows as in the case $k=1$ that there are  at least two  $\al_1(\epsilon), \al_2(\epsilon) \in \mathcal{A}_2(\epsilon)$ such that $u_{\al_1(\epsilon)}, u_{ \al_2(\epsilon)}$
are solutions of problem (\ref{alpha}) having exactly two spikes, and thus completes the proof of Theorem \ref{ksoluciones} for $k=2$.
\par
\vspace{0.3 cm}
\noindent
Finally, we turn to solutions with $k$ spikes. They are located at the points $\{\tau_{2j-1}:j= 1,2,\dots, k\}$. 
In the construction we fix $\tau_{2k-1}=T_0$ and we show that $\limsup \tau_{2(k-1)-1}\leq \pi/2$. Consequently $ F (u_0(\epsilon))> 0$ and
there exists $\al_0(\ep)$ such that the solution of (\ref{alpha}) $u_{\ep,\al_0(\ep)}$ has $k$ spikes.
This can be done with the methods developed in this section. 
Let $\mathcal{A}_k(\epsilon)$ be  the connected components 
of $\mathcal{A}(\epsilon)$ which contains the solutions with k spikes.
Let $(\al_k^-,\al_k^+) \subset(0,1)$ be any connected component of $\mathcal{A}_k(\epsilon)$. 
Then it can be shown that
\[\lim_{\al \to \al_k^{\pm}} \Ti(\al, \epsilon)= \frac{\pi}{2}.\]
Now we can define
$
\Ti_{min,\epsilon}^k= \min \{ \Ti(\al, \epsilon): \al \in   \mathcal{A}_k(\epsilon) \}
$
and it turns out that
$$\lim_{\epsilon \to 0} \Ti_{min,\epsilon}^k =\frac{\pi}{4}.$$
It follows that, given $\ti_1\in (\frac{\pi}{4},\frac{\pi}{2})$ we have that for $\epsilon$ small enough
 \[\frac{\pi}{4} <\Ti_{min,\epsilon}^k<\ti_1.\]

Then exactly as in the cases $k=1$ and $k=2$ we obtain at least two solutions of problem (\ref{alpha}) having exactly $k$ spikes.
This completes the proof of Theorem \ref{ksoluciones}.

\section*{Acknowledgments}
The author thanks Professors P. Amster and J. Petean for valuable observations and suggestions.


\end{document}